\newtheorem{theorem}{Theorem}
\newtheorem{acknowledgement}[theorem]{Acknowledgement}
\newtheorem{corollary}[theorem]{Corollary}
\newtheorem{definition}[theorem]{Definition}
\newtheorem{example}[theorem]{Example}
\newtheorem{lemma}[theorem]{Lemma}
\newtheorem{proposition}[theorem]{Proposition}
\newtheorem{remark}[theorem]{Remark}
\newenvironment{proof}[1][Proof]{\noindent\textbf{#1.} }{\ \rule{0.5em}{0.5em}}
\begin{document}

\title{Discrete homotopies and the fundamental group}
\author{Conrad Plaut \\
Department of Mathematics\\
University of Tennessee\\
Knoxville, TN 37996\\
cplaut@math.utk.edu \and Jay Wilkins \\
Department of Mathematics\\
University of Connecticut\\
196 Auditorium Road, Unit 3009\\
Storrs, CT 06269-3009\\
leonard.wilkins@uconn.edu}
\maketitle

\begin{abstract}
We generalize and strengthen the theorem of Gromov that every compact
Riemannian manifold of diameter at most $D$ has a set of generators $%
g_{1},...,g_{k}$ of length at most $2D$ and relators of the form $%
g_{i}g_{m}=g_{j}$. In particular, we obtain an explicit bound for the number 
$k$ of generators in terms of the number \textquotedblleft short
loops\textquotedblright\ at every point and the number of balls required to
cover a given semi-locally simply connected geodesic space. As a consequence
we obtain a fundamental group finiteness theorem (new even for Riemannian
manifolds) that implies the fundamental group finiteness theorems of
Anderson and Shen-Wei. Our theorem requires no curvature bounds, nor lower
bounds on volume or $1$-systole. We use the method of discrete homotopies
introduced by the first author and V. N. Berestovskii. Central to the proof
is the notion of the \textquotedblleft homotopy critical
spectrum\textquotedblright\ that is closely related to the covering and
length spectra. Discrete methods also allow us to strengthen and simplify
the proofs of some results of Sormani-Wei about the covering spectrum.

Keywords: fundamental group, finiteness theorem, discrete homotopy, Gromov
generators, length spectrum, covering spectrum, homotopy critical spectrum
\end{abstract}

\section{Introduction}

In (\cite{G1},\cite{G2}), Gromov proved the following: If $M$ is a compact
Riemannian manifold of diameter $D$ then $\pi _{1}(M)$ has a set of
generators $g_{1},...,g_{k}$ represented by loops of length at most $2D$ and
relations of the form $g_{i}g_{m}=g_{j}$. Among uses for this theorem are
fundamental group finiteness theorems: If $\mathcal{X}$ is any collection of
spaces with a global bound $N$ for the number of elements of $\pi _{1}(M)$
represented by loops of length at most $2D$ in any $X\in \mathcal{X}$ then $%
\pi _{1}(X)$ has at most $N$ generators and $N^{3}$ relators for any $X\in 
\mathcal{X}$. Therefore there are only finitely many possible fundamental
groups of spaces in $\mathcal{X}$. This strategy was employed by Michael
Anderson (\cite{An}) to show that compact $n$-manifolds with global lower
bounds on volume and Ricci curvature, and diameter $\leq D$, have finitely
many possible fundamental groups. Shen-Wei (\cite{ShW}) reached the same
conclusion, replacing the lower volume bound by a positive uniform lower
bound on the $1$-systole (the infimum of lengths of non-null closed
geodesics).

In this paper we generalize and strengthen Gromov's theorem by giving an
explicit bound for the number $k$ of generators in terms of the number of
\textquotedblleft short loops\textquotedblright\ and the number of balls
required to cover a space, at a given scale. As a consequence we are able to
prove a curvature-free finiteness theorem (Corollary \ref{fcor}) for
fundamental groups of compact geodesic spaces that generalizes and
strengthens both of the previously mentioned finiteness theorems. Our most
general theorem (Theorem \ref{ngmv}) applies to certain deck groups $\pi
_{\varepsilon }(X)$ of covering maps that measure the fundamental group at a
given scale. As an application of Theorem \ref{ngmv} we generalize a
finiteness theorem stated by Sormani-Wei (Proposition 7.8, \cite{SW2}). Note
that their proof implicitly relies on some kind of generalization of
Gromov's theorem, although details are not given in their paper.

We will first state our main result for fundamental groups, saving the more
general theorem from which it follows until we provide a little background
concerning the method of discrete homotopies.

For any path $c$ in a metric space $X$, we define $\left\vert [c]\right\vert 
$ to be the infimum of the lengths of paths in the fixed endpoint homotopy
class of $c$. For any $L>0$, let $\Gamma (X,L)$ be the supremum, over all
possible basepoints $\ast $, of the number of distinct elements $g\in \pi
_{1}(X,\ast )$ such that $\left\vert g\right\vert \leq L$. For a compact
geodesic space there may be no rectifiable curves in the homotopy class of a
path (cf. \cite{BPS}), and certainly $\Gamma (X,L)$ need not be finite (e.g.
a geodesic Hawiian Earring). If $X$ is \textit{semilocally simply connected}
then $\left\vert g\right\vert $ and $\Gamma (X,L)$ are always both finite
(Theorem \ref{slsc}, Corollary \ref{lengthc}), and the $1$-sytole of $X$ is
positive if $X$ is not simply connected (Corollary \ref{systole}). We denote
by $C(X,r,s)$ (resp. $C(X,s)$) the minimum number of open $s$-balls required
to cover a closed $r$-ball in $X$ (resp. $X$).

\begin{theorem}
\label{fthm}Suppose $X$ is a semilocally simply connected, compact geodesic
space of diameter $D$, and let $\varepsilon >0$. Then for any choice of
basepoint, $\pi _{1}(X)$ has a set of generators $g_{1},...,g_{k}$ of length
at most $2D$ and relations of the form $g_{i}g_{m}=g_{j}$ with 
\begin{equation*}
k\leq \frac{8(D+\varepsilon )}{\varepsilon }\cdot \Gamma (X,\varepsilon
)\cdot C\left( X,\frac{\varepsilon }{4}\right) ^{\frac{8(D+\varepsilon )}{%
\varepsilon }}\text{.}
\end{equation*}

In particular, if the $1$-systole of $X$ is $\sigma >0$ then we may take 
\begin{equation*}
k\leq \frac{8(D+\sigma )}{\sigma }C\left( X,D,\frac{\sigma }{4}\right) ^{%
\frac{8(D+\sigma )}{\sigma }}\text{.}
\end{equation*}
\end{theorem}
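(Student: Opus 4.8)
The plan is to derive a presentation of $\pi_1(X)$ by its short elements with triangle relations (Gromov's argument), then bound the number of short elements by passing to the scale-$\varepsilon$ deck group $\pi_\varepsilon(X)$ and counting $\varepsilon$-loops against a $\tfrac{\varepsilon}{4}$-net. Write $\widetilde X$ for the universal cover (which exists because $X$ is semilocally simply connected), $\phi_\varepsilon\colon\pi_1(X)\to\pi_\varepsilon(X)$ for the canonical surjection onto the scale-$\varepsilon$ deck group, $K_\varepsilon=\ker\phi_\varepsilon$, and $S=\{g\in\pi_1(X,\ast):|g|\le 2D\}$. \emph{Presentation.} Fix a lift $\tilde p$ of $\ast$. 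If $|g|>2D$, choose a midpoint $m$ of a minimizing geodesic in $\widetilde X$ from $\tilde p$ to $g\tilde p$; since $\widetilde X/\pi_1(X)=X$ has diameter $D$, some $h$ satisfies $d(m,h\tilde p)\le D$, so $|h|\le\tfrac12|g|+D<|g|$ and $|h^{-1}g|\le D+\tfrac12|g|<|g|$, making $g=h(h^{-1}g)$ a product of strictly shorter elements. Since $\{|g|:g\in\pi_1(X)\}$ is discrete (Corollary \ref{lengthc}, Corollary \ref{systole}), this recursion terminates and $S$ generates. That every relation among elements of $S$ follows from relations $g_ig_m=g_j$ ($g_i,g_m,g_j\in S$) is Gromov's disk-subdivision argument, valid since $\widetilde X$ is simply connected; the discrete analogue for $\pi_\varepsilon(X)$, using $\varepsilon$-homotopies in place of disks, is Theorem \ref{ngmv}. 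So it suffices to bound $k=|S|$.

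\emph{Counting $S$ at scale $\varepsilon$.} Cover $X$ by $N:=C(X,\tfrac{\varepsilon}{4})$ balls of radius $\tfrac{\varepsilon}{4}$ with centers $c_1,\dots,c_N$, and fix $c_\ast\ni\ast$. Given $g\in S$, take a representing loop of length $<2(D+\varepsilon)$, sample it in arclength steps $<\tfrac{\varepsilon}{4}$ — at most $L:=8(D+\varepsilon)/\varepsilon$ of them — and round each sample point to a nearest center, the endpoints to $c_\ast$. Consecutive centers then lie within $\tfrac{3\varepsilon}{4}<\varepsilon$, so this produces an $\varepsilon$-loop on $\{c_1,\dots,c_N\}$ with at most $L$ vertices; the rounding is a finite composition of basic moves, so this $\varepsilon$-loop is $\varepsilon$-homotopic to the sampled loop, hence represents $\phi_\varepsilon(g)$. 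Thus every class in $\phi_\varepsilon(S)$ has a net $\varepsilon$-loop representative with at most $L$ vertices, so
\[
|\phi_\varepsilon(S)|\;\le\;\sum_{\ell\le L}N^{\ell}\;\le\;L\,N^{L}\;=\;\frac{8(D+\varepsilon)}{\varepsilon}\,C\!\left(X,\tfrac{\varepsilon}{4}\right)^{8(D+\varepsilon)/\varepsilon}.
\]

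\emph{Fibers and conclusion.} It remains to bound each fiber $S\cap\phi_\varepsilon^{-1}(\bar g)$ by $\Gamma(X,\varepsilon)$, for then $k=|S|\le\Gamma(X,\varepsilon)\,|\phi_\varepsilon(S)|$. If $g,g'\in S$ satisfy $\phi_\varepsilon(g)=\phi_\varepsilon(g')$ then $g^{-1}g'\in K_\varepsilon$, and the claim is that only $\le\Gamma(X,\varepsilon)$ such differences occur. This is where the homotopy critical spectrum enters: for a semilocally simply connected compact geodesic space it is discrete, and $K_\varepsilon$ is controlled by the finitely many ``essential'' short loops at homotopy critical values $\le\varepsilon$ — loops of length comparable to $\varepsilon$; organizing a fiber by which of these separate its elements from a fixed shortest representative expresses each $g^{-1}g'$ using elements of length $\le\varepsilon$, of which there are at most $\Gamma(X,\varepsilon)$. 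This yields the first bound. The second is the case $\varepsilon=\sigma$: when the $1$-systole equals $\sigma$, no nontrivial class has a representative of length $\le\sigma$, so $\Gamma(X,\sigma)=1$, and a closed $D$-ball in the diameter-$D$ space $X$ is all of $X$, so $C(X,\tfrac{\sigma}{4})=C(X,D,\tfrac{\sigma}{4})$.

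The hard step is the third: forcing the fiber bound to be exactly $\Gamma(X,\varepsilon)$, rather than a coarser normal-closure estimate, requires the precise comparison between $\varepsilon$-homotopy and ordinary homotopy — control of the homotopy critical spectrum and its relations to the length and covering spectra — which is the technical core of the paper. The first step is Gromov's argument, needing only the universal cover and discreteness of the length set; the second is elementary once the bookkeeping (the constant $8$, and checking that rounding to the net is literally a sequence of basic moves) is handled carefully.
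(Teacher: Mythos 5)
Your decomposition $k = |S| \le \Gamma(X,\varepsilon)\cdot|\phi_\varepsilon(S)|$ is not the paper's argument, and the fiber bound it rests on is false. The claim is that if $g, g'\in S$ satisfy $\phi_\varepsilon(g) = \phi_\varepsilon(g')$ then the number of possible differences $g^{-1}g'$ is at most $\Gamma(X,\varepsilon)$. But $g^{-1}g'\in K_\varepsilon$ only says that $g^{-1}g'$ is \emph{$\varepsilon$-null} (dies under $\Lambda$), not that it has length $\le\varepsilon$; the two notions are completely different. An element of $K_\varepsilon$ can have length as large as $4D$ and still be $\varepsilon$-null, and there is no bound on the size of the fiber in terms of $\Gamma(X,\varepsilon)$. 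A concrete counterexample: let $X$ be a geodesic circle of circumference $\ell$ (so $D = \ell/2$) and take $\varepsilon$ slightly greater than the unique homotopy critical value $\ell/3$. Then $\pi_\varepsilon(X)$ is trivial, $\Gamma(X,\varepsilon) = 1$ (the generator has length $\ell > \varepsilon$), but $S$ contains $\{e, g, g^{-1}\}$, so $|S| = 3 > 1 = \Gamma(X,\varepsilon)\cdot|\phi_\varepsilon(S)|$. Your last paragraph flags this as ``the hard step'' but does not supply an argument, and the argument cannot be repaired in the form you stated it.

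The paper's proof avoids this difficulty entirely by never trying to bound fibers of the surjection $\pi_1(X)\to\pi_\varepsilon(X)$. Instead (proof of Theorem~\ref{fthm}), it passes to a scale $\delta<\varepsilon$ small enough that $\Lambda:\pi_1(X)\to\pi_\delta(X)$ is a \emph{length-preserving isomorphism} (Theorem~\ref{slsc}), and then invokes the third part of Theorem~\ref{ngmv} directly for $\pi_\delta(X)$. The generating set and the triangle relations come from the $(\varepsilon,\delta)$-chassis (Section 4): a simplicial $2$-complex whose edge group is isomorphic to $\pi_\varepsilon(X)$, with generators read off from a spanning tree and triangle relations from the $2$-simplices. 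The counting in the third part of Theorem~\ref{ngmv} then decomposes each generating $\delta$-loop of length $<2(D+\varepsilon)$ into at most $8(D+\varepsilon)/\varepsilon$ segments of length $<\varepsilon/4$, rounds the breakpoints to an $\varepsilon/4$-net $W$, replaces each segment by a fixed reference $\delta$-chain between net points, and records the \emph{correction loops} $\beta_k$ of length $<\varepsilon$ that reconcile the original loop with its rounded version; the bound $M = \Gamma(X,\varepsilon)$ applies to these corrections because each is a conjugate of a $\pi_\delta$-class of length $<\varepsilon$, and $\pi_\delta\cong\pi_1$ length-preservingly. The essential point is that both the net-sequence count and the short-correction count happen \emph{at scale $\delta$}, where the group is already $\pi_1(X)$; nothing is being lifted back from $\pi_\varepsilon$. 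Your first step (Gromov's midpoint recursion in $\widetilde X$) is a valid alternative to the chassis for producing a $2D$-generating set, and your middle step correctly reproduces the net-sampling count of part 2 of Theorem~\ref{ngmv}, but these two pieces do not combine into the theorem without the paper's segment-by-segment correction argument, and the fiber-of-$\phi_\varepsilon$ shortcut you propose in its place does not work.
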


The previously mentioned theorem of Shen-Wei is an immediate corollary of
this theorem since for any $r,s>0$, $C(X,r,s)$ is uniformly controlled in
any precompact class of spaces by Gromov's Precompactness Criterion, and the
lower bound on Ricci curvature provides the precompactness (\cite{G2}). On
the other hand, Anderson showed (Remark 2.2(2), \cite{An}) that that if $M$
has Ricci curvature $\geq -(n-1)k^{2}$, diameter $\leq D$ and volume $\geq v$
then for any basepoint, the subgroup of $\pi _{1}(M)$ generated by loops of
length less than $\frac{Dv}{v_{k}(2D)}$ has order bounded by above by $\frac{%
v_{k}(2D)}{v}$ (here $v_{k}(2D)$ is the volume of the $2D$-ball in the space
form of curvature $-k$ and dimension $n=\dim M$). In other words, for the
class of spaces with these uniform bounds, with $\varepsilon :=\frac{Dv}{%
v_{k}(2D)}$ one has $\Gamma (M,\varepsilon )\leq \frac{v_{k}(2D)}{v}$.
Therefore Theorem \ref{fthm} implies Anderson's Finiteness Theorem. In fact,
we have:

\begin{corollary}
\label{fcor}Let $\mathcal{X}$ be any Gromov-Hausdorff precompact class of
semilocally simply connected compact geodesic spaces. If there are numbers $%
\varepsilon >0$ and $N$ such that for every $X\in \mathcal{X}$, $\Gamma
(X,\varepsilon )\leq N$, then there are finitely many possible fundamental
groups for spaces in $\mathcal{X}$.
\end{corollary}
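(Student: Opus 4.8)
The plan is to read off everything from Theorem \ref{fthm} once we check that each quantity in its bound on $k$ is uniformly controlled over the class $\mathcal{X}$. The hypothesis already hands us the uniform bound $\Gamma(X,\varepsilon)\le N$ for the fixed $\varepsilon$, so what remains is to bound the diameter and the covering number $C(X,\varepsilon/4)$ uniformly. Both are immediate consequences of Gromov's precompactness criterion: a Gromov--Hausdorff precompact class of compact metric spaces has uniformly bounded diameter, say every $X\in\mathcal{X}$ has diameter at most $D$, and is uniformly totally bounded, i.e.\ for each $s>0$ there is an integer $M(s)$, independent of $X$, with $C(X,s)\le M(s)$. (Precompactness in the GH metric yields a finite $\tfrac{s}{3}$-net $\{X_{1},\dots,X_{p}\}$ for $\mathcal{X}$; each compact $X_{i}$ is covered by finitely many $\tfrac{s}{3}$-balls, and an $\varepsilon$-approximation between a given $X$ and a nearby $X_{i}$ pulls such a cover back to a cover of $X$ by $s$-balls.) Set $M:=M(\varepsilon/4)$.

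Now fix an arbitrary $X\in\mathcal{X}$ and an arbitrary basepoint. By Theorem \ref{fthm}, $\pi_{1}(X)$ admits a presentation with generators $g_{1},\dots,g_{k}$ of length at most $2D$ and relations of the form $g_{i}g_{m}=g_{j}$, where
\begin{equation*}
k\;\le\;\frac{8(D+\varepsilon)}{\varepsilon}\cdot\Gamma(X,\varepsilon)\cdot C\!\left(X,\tfrac{\varepsilon}{4}\right)^{\frac{8(D+\varepsilon)}{\varepsilon}}\;\le\;\frac{8(D+\varepsilon)}{\varepsilon}\cdot N\cdot M^{\frac{8(D+\varepsilon)}{\varepsilon}}\;=:\;K .
\end{equation*}
The constant $K$ depends only on $\varepsilon$, $N$, $D$ and $M$, hence only on the class $\mathcal{X}$ together with the fixed $\varepsilon$ and $N$, and not on the particular space $X$ or basepoint.

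It then remains to count the groups that can occur, exactly as sketched in the introduction. A group with a presentation of the above type is determined up to isomorphism by the integer $k\le K$ together with the set $R\subseteq\{1,\dots,k\}^{3}$ of triples $(i,m,j)$ for which the relation $g_{i}g_{m}=g_{j}$ is imposed; there are at most $K$ choices for $k$ and at most $2^{K^{3}}$ choices for $R$, so at most $K\cdot 2^{K^{3}}$ isomorphism types of groups arise as $\pi_{1}(X)$ for $X\in\mathcal{X}$, which is the desired conclusion. The only step that requires any care is the passage from Gromov--Hausdorff precompactness to the uniform covering bound $C(X,\varepsilon/4)\le M$; everything else is a direct substitution into Theorem \ref{fthm} followed by the elementary enumeration of presentations.
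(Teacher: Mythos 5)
Your proof is correct and follows exactly the route the paper intends: Gromov precompactness gives uniform bounds on the diameter and on $C(X,\varepsilon/4)$, the hypothesis supplies the bound on $\Gamma(X,\varepsilon)$, Theorem~\ref{fthm} then yields a uniform bound $K$ on the number of generators with the special relators, and the finitely-many-presentations count finishes the argument (the paper sketches this same counting step in the introduction). The one point worth making explicit, which you handle correctly by using the monotonicity of the bound, is that the $D$ appearing in Theorem~\ref{fthm} is $\mathrm{diam}(X)$, so one must replace it by the uniform upper bound on diameters, which only weakens the inequality.
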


We should point out a subtle but important difference between Anderson's
final step (i.e. from Remark 2.2(2) to the finiteness theorem) and our
proof. Anderson's final step depends on the fact that the universal covering
space also has Ricci curvature $\geq -(n-1)k^{2}$ and hence one may use
Bishop's volume comparison theorem in the universal covering space. One can
\textquotedblleft translate\textquotedblright\ his argument into one that
relies instead on global control of the numbers $C(X,r,s)$ in the universal
cover. However, this approach \textit{requires that one know that the
collection of all universal covers of all spaces in the class is (pointed)
Gromov-Hausdorff precompact. }The Shen-Wei and Sormani-Wei theorems also
rely on precompactness of the universal covering spaces. But without a lower
bound on the $1$-systole it is in general impossible to conclude from
precompactness of a class of spaces that the collection of their universal
covers is precompact (Example \ref{hw}), so we cannot use this strategy in
our proof.

The basic construction of \cite{BPTG}, \cite{BPUU} goes as follows in the
special case of a metric space $X$. For $\varepsilon >0$, an $\varepsilon $-%
\textit{chain} is a finite sequence $\alpha :=\{x_{0},...,x_{n}\}$ such that 
$d(x_{i},x_{i+1})<\varepsilon $ for all $i$. We define the \textit{length}
of $\alpha $ to be $L(\alpha )=\sum\limits_{i=1}^{n}d(x_{i},x_{i-1})$, and
define the size of $\alpha $ to be $\nu (\alpha ):=n$. The \textit{reversal}
of $\alpha $ is the chain $\overline{\alpha }:=\{x_{n},...,x_{0}\}$. A 
\textit{basic move }on an $\varepsilon $-chain $\alpha $ consists of either
adding or removing a single point, as long as the resulting chain is still
an $\varepsilon $-chain. An $\varepsilon $-homotopy between $\varepsilon $%
-chains $\alpha $ and $\beta $ with the same endpoints is a finite sequence
of $\varepsilon $-chains $\left\langle \alpha =\eta _{0},\eta _{1},...,\eta
_{k}=\beta \right\rangle $ such that all $\eta _{i}$ have the same endpoints
and for all $i$, $\eta _{i}$ and $\eta _{i+1}$ differ by a basic move. The
resulting equivalence classes are denoted $[\alpha ]_{\varepsilon }$; for
simplicity we will usually write $[x_{0},...,x_{n}]_{\varepsilon }$ rather
than $[\{x_{0},...,x_{n}\}]_{\varepsilon }$. If $\alpha =\{x_{0},...,x_{n}\}$
and $\beta =\{x_{n}=y_{0},...,y_{m}\}$ are $\varepsilon $-chains then the 
\textit{concatenation }$\alpha \ast \beta $ is the $\varepsilon $-chain $%
\{x_{0},...,x_{n}=y_{0},...,y_{m}\}$. It is easy to check that there is a
well-defined operation induced by concatenation: $\left[ \alpha \right]
_{\varepsilon }\ast \left[ \beta \right] _{\varepsilon }:=[\alpha \ast \beta
]_{\varepsilon }$. We define two $\varepsilon $-loops $\lambda _{1}$ and $%
\lambda _{2}$ to be \textit{freely }$\varepsilon $-homotopic if there exist $%
\varepsilon $-chains $\alpha $ and $\beta $ starting at a common point $%
x_{0} $, such that $\alpha \ast \lambda _{1}\ast \overline{\alpha }$ is $%
\varepsilon $-homotopic to $\beta \ast \lambda _{2}\ast \overline{\beta }$.

Fix a basepoint $\ast $ in $X$ (change of basepoint is algebraically
immaterial for connected metric spaces--see \cite{BPUU}). The set of all $%
\varepsilon $-homotopy classes $[\alpha ]_{\varepsilon }$ of $\varepsilon $%
-loops starting at $\ast $ forms a group $\pi _{\varepsilon }(X)$ with
operation induced by concatenation of $\varepsilon $-loops. The group $\pi
_{\varepsilon }(X)$ can be regarded as a kind of fundamental group that
measures only \textquotedblleft holes at the scale scale of $\varepsilon $%
.\textquotedblright\ An $\varepsilon $-loop $\alpha
=\{x_{0},...,x_{n}=x_{0}\}$ that is $\varepsilon $-homotopic to the trivial
loop $\{x_{0}\}$ is called $\varepsilon $\textit{-null}.

The set of all $\varepsilon $-homotopy classes $[\alpha ]_{\varepsilon }$ of 
$\varepsilon $-chains $\alpha $ in $X$ starting at $\ast $ will be denoted
by $X_{\varepsilon }$. The \textquotedblleft endpoint
mapping\textquotedblright\ will be denoted by $\phi _{\varepsilon
}:X_{\varepsilon }\rightarrow X$. That is, if $\alpha =\{\ast
=x_{0},x_{1},...,x_{n}\}$ then $\phi _{\varepsilon }([\alpha ]_{\varepsilon
})=x_{n}$. Since $\varepsilon $-homotopic $\varepsilon $-chains always have
the same endpoints, the function $\phi _{\varepsilon }$ is well-defined. We
choose $[\ast ]_{\varepsilon }$ for the basepoint of $X_{\varepsilon }$ so $%
\phi _{\varepsilon }$ is basepoint preserving. For any $\varepsilon $-chain $%
\alpha $ in $X$, let 
\begin{equation}
\left\vert \lbrack \alpha ]_{\varepsilon }\right\vert :=\inf \{L(\gamma
):\gamma \in \lbrack \alpha ]_{\varepsilon }\}\text{.}  \label{ennn}
\end{equation}

The above definition allows us to define a metric on $X_{\varepsilon }$ so
that $\pi _{\varepsilon }(X)$ acts by isometries induced by concatenation
(Definition \ref{metdef}). When $X$ is connected, $\phi _{\varepsilon }$ is
a regular covering map with deck group $\pi _{\varepsilon }(X)$, and when $X$
is geodesic the metric coincides with the usual lifted length metric
(Proposition \ref{metre}). For any $\delta \geq \varepsilon >0$ there is a
natural mapping $\phi _{\delta \varepsilon }:X_{\varepsilon }\rightarrow
X_{\delta }$ given by $\phi _{\delta \varepsilon }([\alpha ]_{\varepsilon
})=[\alpha ]_{\delta }$. This map is well defined because every $\varepsilon 
$-chain (resp. $\varepsilon $-homotopy) is a $\delta $-chain (resp. $\delta $%
-homotopy). One additional very important feature of geodesic spaces is that
any $\varepsilon $-chain has a \textquotedblleft midpoint
refinement\textquotedblright\ obtained by adding a midpoint between each
point in the chain and its successor (which is clearly an $\varepsilon $%
-homotopy), producing an $\frac{\varepsilon }{2}$-chain in the same $%
\varepsilon $-homotopy class. Refinement is often essential for arguments
involving limits, since being an $\varepsilon $-chain is not a closed
condition. For this reason, many arguments in this paper do not carry over
to general metric spaces.

The main relationship between $\pi _{\varepsilon }(X)$ and the fundamental
group $\pi _{1}(X)$ involves a function $\Lambda $ defined as follows (see
also Proposition 78, \cite{BPUU}). Given any continuous path $%
c:[0,1]\rightarrow X$, choose $0=t_{0}<\cdot \cdot \cdot <t_{n}=1$ fine
enough that every image $c([t_{i},t_{i+1}])$ is contained in the open ball $%
B(c(t_{i}),\varepsilon )$. Then $\Lambda
([c]):=[c(t_{0}),...,c(t_{n})]_{\varepsilon }$ is well-defined by Corollary %
\ref{homimp}. Note that $\Lambda $ is \textquotedblleft length
non-increasing\textquotedblright\ in the sense that $\left\vert \Lambda
([c])\right\vert \leq \left\vert \lbrack c]\right\vert $. Restricting $%
\Lambda $ to the fundamental group at any base point yields a homomorphism $%
\pi _{1}(X)\rightarrow \pi _{\varepsilon }(X)$. When $X$ is geodesic, $%
\Lambda $ is surjective since the successive points of an $\varepsilon $%
-loop $\lambda $ may be joined by geodesics to obtain a path loop whose
class goes to $[\alpha ]_{\varepsilon }$. The kernel of $\Lambda $ is
precisely described in \cite{PW2}; for the purposes of this paper we need
only know that if $X$ is a compact semilcoally simply connected geodesic
space then for small enough $\varepsilon $, $\Lambda $ is a
length-preserving isomorphism (Theorem \ref{slsc}). All of our theorems
about the fundamental group are directly derived from the next theorem via $%
\Lambda $.

\begin{theorem}
\label{ngmv}Let $X$ be a compact geodesic space of diameter $D$, and $%
\varepsilon >0$. Then

\begin{enumerate}
\item $\pi _{\varepsilon }(X)$ has a finite set of generators $G=\left\{
[\gamma _{1}]_{\varepsilon },...,[\gamma _{k}]_{\varepsilon }\right\} $ such
that $L(\gamma _{i})\leq 2(D+\varepsilon )$ for all $i$, and relators of the
form $[\gamma _{i}]_{\varepsilon }[\gamma _{j}]_{\varepsilon }=[\gamma
_{m}]_{\varepsilon }$.

\item For any $L>0$ there are at most $C\left( X,\frac{\varepsilon }{4}%
\right) ^{\frac{4L}{\varepsilon }}$ distinct elements $[\alpha
]_{\varepsilon }$ of $\pi _{\varepsilon }(X)$ such that $\left\vert [\alpha
]_{\varepsilon }\right\vert <L$, and in particular we may take 
\begin{equation*}
k\leq C\left( X,\frac{\varepsilon }{4}\right) ^{\frac{8(D+\varepsilon )}{%
\varepsilon }}
\end{equation*}
in the first part.

\item Suppose, in addition, that for any basepoint $\ast $ and $0<\delta
<\varepsilon $ there are at most $M$ distinct non-trivial elements $[\alpha
]_{\delta }\in \pi _{\delta }(X)$ such that $\left\vert [\alpha ]_{\delta
}\right\vert <\varepsilon $. Then the number of generators of $\pi _{\delta
}(X)$ with relators as in the first part may be taken to be at most 
\begin{equation*}
M\left[ \frac{8\left( D+\varepsilon \right) }{\varepsilon }\right] \left[
C\left( X,\frac{\varepsilon }{4}\right) \right] ^{\frac{8\left(
D+\varepsilon \right) }{\varepsilon }}
\end{equation*}
\end{enumerate}
\end{theorem}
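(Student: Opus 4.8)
The plan is to construct by hand a short generating set and relators of the required shape, then obtain the numerical estimate in (2) by a covering/counting argument, and finally bootstrap these to get (3).

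For (1), fix a finite $(\varepsilon/4)$-net $P=\{p_0=\ast,p_1,\dots,p_{C-1}\}$ coming from a cover of $X$ by $C=C(X,\varepsilon/4)$ open $(\varepsilon/4)$-balls, and for each $p_a$ choose a geodesic from $\ast$ to $p_a$, inserting midpoints until it becomes an $\varepsilon$-chain $\tau_a$ of length $\le D$ (with $\tau_0=\{\ast\}$). For every ordered pair with $d(p_a,p_b)<\varepsilon$ put $\gamma_{ab}:=\tau_a\ast\{p_a,p_b\}\ast\overline{\tau_b}$, an $\varepsilon$-loop at $\ast$ with $L(\gamma_{ab})<2D+\varepsilon<2(D+\varepsilon)$, and let $G$ be the finite set of classes $[\gamma_{ab}]_\varepsilon$. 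That $G$ generates $\pi_\varepsilon(X)$ is the telescoping identity: given an $\varepsilon$-loop $\lambda=\{\ast=z_0,\dots,z_n=\ast\}$, midpoint-refine it to an $(\varepsilon/2)$-chain, pick $p_{a(i)}$ with $d(z_i,p_{a(i)})<\varepsilon/4$ (and $a(0)=a(n)=0$), and note that replacing each $z_i$ by $p_{a(i)}$ left to right is a sequence of basic moves (consecutive net points lie within $\varepsilon/4+\varepsilon/2+\varepsilon/4=\varepsilon$), so $[\lambda]_\varepsilon=[\{p_{a(0)},\dots,p_{a(n)}\}]_\varepsilon$; inserting the cancelling pairs $\overline{\tau_{a(i)}}\ast\tau_{a(i)}$ between consecutive points rewrites this as $[\gamma_{a(0)a(1)}]_\varepsilon\cdots[\gamma_{a(n-1)a(n)}]_\varepsilon$. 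The relators are the triangle relations $[\gamma_{ab}]_\varepsilon[\gamma_{bc}]_\varepsilon=[\gamma_{ac}]_\varepsilon$, valid whenever $p_a,p_b,p_c$ are pairwise within $\varepsilon$ (then $\{p_a,p_b,p_c\}$ is $\varepsilon$-homotopic rel endpoints to $\{p_a,p_c\}$ by one basic move); they are of the stated form, and the degenerate cases $p_a=p_b=p_c$ force $[\gamma_{aa}]_\varepsilon=e$ and hence $[\gamma_{ba}]_\varepsilon=[\gamma_{ab}]_\varepsilon^{-1}$.

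The substantive content of (1), and the step I expect to be the main obstacle, is \emph{completeness}: that every relation among the $[\gamma_{ab}]_\varepsilon$ is a consequence of the triangle relations. The clean route is to recognize the $\varepsilon$-homotopy relation as edge-path homotopy: a basic move deleting $z_i$ from $\{z_{i-1},z_i,z_{i+1}\}$ is admissible exactly when that triple spans a $2$-simplex of the Vietoris--Rips-type complex $R_\varepsilon(X)$ whose simplices are the finite subsets of diameter $<\varepsilon$, so $\pi_\varepsilon(X)$ is the edge-path group of $R_\varepsilon(X)$, whose standard presentation has generators the non-tree edges and relators the $2$-simplex boundaries --- already of the form $g_ig_j=g_m$. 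It then remains to reduce this infinite presentation to the finite short set $G$ while preserving the relator form: I would compare $R_\varepsilon(X)$ with its restriction to the net $P$ (here the midpoint-refinement property of geodesic spaces is what reconciles the scales, and this comparison is precisely the sort of statement the covering-space machinery of the earlier sections supplies), and then route each retained edge generator through $\ast$ along the geodesic spurs $\tau_a$, a change of generating set whose bookkeeping keeps every $2$-simplex relator in the form $g_ig_j=g_m$. The genuinely delicate bit is the scale juggling between ``$\varepsilon$-homotopy'' and ``net'': an $\varepsilon$-homotopy of net loops may momentarily leave the net, with gaps up to $\varepsilon$ that cannot be repaired by projecting back, so a naive argument breaks exactly here.

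For (2), a class with $|[\alpha]_\varepsilon|<L$ has a representative $\gamma$ with $L(\gamma)<L$; midpoint-refine it to an $(\varepsilon/2)$-chain of the same length, then delete interior points as long as possible, which never increases length by the triangle inequality. The resulting reduced $(\varepsilon/2)$-chain satisfies $d(z_{i-1},z_{i+1})\ge\varepsilon/2$ at each interior $i$, so summing over alternate indices forces at most $4L/\varepsilon$ points; projecting interior points to the $(\varepsilon/4)$-net as in (1) replaces the class by one of the net loops obtained by choosing at most $4L/\varepsilon$ of the $C(X,\varepsilon/4)$ net points, giving $C(X,\varepsilon/4)^{4L/\varepsilon}$. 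Since the generators $\gamma_{ab}$ of (1) have $L(\gamma_{ab})<2(D+\varepsilon)$ strictly, taking $L=2(D+\varepsilon)$ bounds $k=|G|$ by $C(X,\varepsilon/4)^{8(D+\varepsilon)/\varepsilon}$. For (3), rerun (1) at scale $\delta$ but organized relative to scale $\varepsilon$: the short generators of $\pi_\varepsilon(X)$ become $\delta$-loops after enough refinement, and by (2) there are at most $C(X,\varepsilon/4)^{8(D+\varepsilon)/\varepsilon}$ of them up to $\varepsilon$-homotopy; two $\delta$-generators sharing an $\varepsilon$-class differ by an element of $\ker\bigl(\phi_{\varepsilon\delta}\colon\pi_\delta(X)\to\pi_\varepsilon(X)\bigr)$, and this is where the hypothesis enters (through free $\delta$-homotopy and the ``for any basepoint'' clause): the kernel is normally generated by the at most $M$ nontrivial $\delta$-classes of length $<\varepsilon$, and unwinding a length-$\le 2(D+\varepsilon)$ net $\delta$-loop one point at a time grafts in at most $\lceil 8(D+\varepsilon)/\varepsilon\rceil$ such correction factors. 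Collecting these and conjugating back to $\ast$ by geodesic spurs yields a generating set of size at most $M\lceil 8(D+\varepsilon)/\varepsilon\rceil\,C(X,\varepsilon/4)^{8(D+\varepsilon)/\varepsilon}$, with relators of the same triangle form produced exactly as in (1).
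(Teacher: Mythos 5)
Your part (2) is essentially the paper's argument and is fine. Your part (1), however, has a genuine gap, and it is exactly where you flag it.

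You correctly reduce (1) to showing that the triangle relations on the net loops $\gamma_{ab}$ are a complete set of relations, i.e.\ that an $\varepsilon$-null simplicial (net) loop is null in the edge group of the restricted Rips complex. You observe that ``an $\varepsilon$-homotopy of net loops may momentarily leave the net, with gaps up to $\varepsilon$ that cannot be repaired by projecting back, so a naive argument breaks exactly here,'' and then you stop. This is the whole ballgame, and midpoint refinement alone does not fix it: projecting a general $\varepsilon$-homotopy back onto a $\delta$-net worsens every distance by up to $2\delta$, so nothing is gained no matter how fine the net is, and there is no slack to absorb the error. The paper's resolution is a second, independent ingredient you never invoke: \emph{discreteness of the homotopy critical spectrum} (Theorem \ref{ghbound}). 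Because of it, one can choose $\sigma<\varepsilon$ with no critical value in $[\sigma,\varepsilon)$, so $\phi_{\varepsilon\sigma}$ is a bijection (Lemma \ref{nocrit}). Then an $\varepsilon$-null simplicial loop is in fact $\sigma$-null, and a $\sigma$-null homotopy, when each intermediate point is projected to a net point within $\delta<(\varepsilon-\sigma)/2$, becomes a genuine simplicial $\varepsilon$-homotopy since $\sigma+2\delta<\varepsilon$ (Lemma \ref{third}). The gap $\varepsilon-\sigma$ is the slack that makes the projection work, and without it your program does not close. The paper packages this as the $(\varepsilon,\delta)$-chassis with Lemmas \ref{zero}--\ref{third} and then reads off generators and relators from the usual spanning-tree presentation of the edge group; your Rips-complex picture is the right picture, and your spur construction $\gamma_{ab}=\tau_a\ast\{p_a,p_b\}\ast\overline{\tau_b}$ mirrors the paper's $g_{ij}=p\ast e_{ij}\ast q$, but without the $\sigma<\varepsilon$ trick the injectivity step is missing.

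Your part (3) is in the right spirit but is too loose at one point: the claim that ``the kernel is normally generated by the at most $M$ nontrivial $\delta$-classes of length $<\varepsilon$'' is not what you actually need, and in that form it is not obviously true (the $M$ bound is per basepoint in $W$, and normal generation is a stronger statement than what the counting requires). The paper does not argue through the kernel of $\phi_{\varepsilon\delta}$ at all; instead it takes the short $\delta$-generators already produced by part (1) applied at scale $\delta$ and directly decomposes each such $\delta$-loop $\lambda$ as $\beta_r\ast\cdots\ast\beta_0\ast\lambda'$, where $\lambda'$ is determined by a sequence of $\le r$ points of $W$ and each $\beta_k$ is a conjugate of a $\delta$-class of length $<\varepsilon$ based at some $w\in W$; the hypothesis is then used pointwise at each $w$ to bound the choices. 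That yields the stated $M\cdot r\cdot s^r$ count without any appeal to normal generation. Rewriting your argument in that explicit form would repair it.
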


The proof of the second part of the theorem is a nice illustration of the
utility of discrete methods. Fix any covering $\mathcal{B}$ of $X$ by $%
N:=C(X,\frac{\varepsilon }{4})$ $\frac{\varepsilon }{4}$-balls. Applying
Lemma \ref{lestl} and a midpoint refinement, we may represent any element of 
$\pi _{\varepsilon }(X)$ by an $\frac{\varepsilon }{2}$-loop $\alpha $ such
that $\nu (\alpha )\leq \frac{4L}{\varepsilon }+2$. We may choose one $B\in 
\mathcal{B}$ containing each point in the loop. Since the first and last
balls may always be chosen to be the same (containing the basepoint), each $%
\alpha $ corresponds to a sequential choice of at most $\frac{4L}{%
\varepsilon }$ balls in $\mathcal{B}$. But Proposition \ref{close} tells us
that if any two loops share the same sequence of balls (so corresponding
points are distance $<\frac{\varepsilon }{2}$ apart), they must be $%
\varepsilon $-homotopic. So there is at most one class $[\alpha
]_{\varepsilon }$ for each such sequence of balls, and there are at most $N^{%
\frac{4L}{\varepsilon }}$ different sequences of balls.

The proof of the first part of part of the theorem requires the construction
of a metric simplicial $2$-complex called an $(\varepsilon ,\delta )$%
-chassis for a compact geodesic space $X$, which is described in the last
section. For small enough $\delta >0$, any $(\varepsilon ,\delta )$-chassis
has edge group isomorphic to $\pi _{\varepsilon }(X)$ (although the two
spaces may not have the same homotopy type!). In this way our proof of
Theorem \ref{fthm} is quite different from Gromov's proof of his theorem.
However, it is interesting to note that he exploits the fact that the set of
lengths of minimal loops representing fundamental group elements in a
compact Riemannian manifold is discrete. Our proof depends on discreteness
of what we call the \textquotedblleft homotopy critical
spectrum\textquotedblright , which is closely related to the length spectrum
and covering spectrum of Sormani-Wei. We will now describe this spectrum and
related concepts (and questions) that are of indepenent interest.

\begin{definition}
An $\varepsilon $-loop $\lambda $ in a metric space $X$ is called $%
\varepsilon $-critical if $\lambda $ is not $\varepsilon $-null, but is $%
\delta $-null for all $\delta >\varepsilon $. When an $\varepsilon $%
-critical $\varepsilon $-loop exists, $\varepsilon $ is called a homotopy
critical value; the collection of these values is called the homotopy
critical spectrum.
\end{definition}

When $X$ is a geodesic space the functions $\phi _{\varepsilon \delta
}:X_{\delta }\rightarrow X_{\varepsilon }$ are all covering maps, which are
homeomorphisms precisely if there are no critical values $\sigma $ with $%
\varepsilon >\sigma \geq \delta $ (Lemma \ref{nocrit}). In a compact
geodesic space, the homotopy critical spectrum is discrete in $(0,\infty )$
(more about this below) and therefore indicates the exact values $%
\varepsilon >0$ where the equivalence type of the $\varepsilon $-covering
maps changes.

In (\cite{SW1},\cite{SW2},\cite{SW3},\cite{SW4}), Sormani-Wei independently
studied covering maps that encode geometric information, and defined the
\textquotedblleft covering spectrum\textquotedblright\ to be those values at
which the equivalence type of the covering maps changes. They utilized a
classical construction of Spanier (\cite{S}) for locally pathwise connected
topological spaces that provides a covering map $\pi ^{\delta }:\widetilde{X}%
^{\delta }\rightarrow X$ corresponding to the open cover of a geodesic space 
$X$ by open $\delta $-balls, which they called the $\delta $-cover of $X$.
As it turns out, despite the very different construction methods, when $%
\delta =\frac{3\varepsilon }{2}$ and $X$ is a compact geodesic space, this
covering map is isometrically equivalent to our covering map $\phi
_{\varepsilon }:X_{\varepsilon }\rightarrow X$ (\cite{PW2}). In fact, $\ker
\Lambda $ is precisely the Spanier group for the open cover of $X$ by $\frac{%
3\varepsilon }{2}$-balls. It follows that in the compact case, the covering
spectrum and homotopy critical spectrum differ precisely by a factor of $%
\frac{2}{3}$. However, we will not use any prior theorems about the covering
spectrum, but rather will directly prove stronger results about the homotopy
critical spectrum. Moreover, as should be clear from the present paper and (%
\cite{PW2}), discrete methods have many advantages, including simplicity,
amenability with the Gromov-Hausdorff metric, and applicability to
non-geodesic spaces. For example, Sormani-Wei (\cite{SW2}, Theorem 4.7) show
that the covering spectrum is contained in $\frac{1}{2}$ times the length
spectrum (set of lengths of closed geodesics) for geodesic spaces with a
universal cover. We not only show that this statement is true (replacing $%
\frac{1}{2}$ by $\frac{1}{3}$ in our notation) without assuming a universal
cover, we identify precisely the very special closed geodesics that
contribute to the homotopy critical spectrum:

\begin{definition}
An essential $\varepsilon $-circle in a geodesic space consists of the image
of an arclength parameterized (path) loop of length $3\varepsilon $ that
contains an $\varepsilon $-loop that is not $\varepsilon $-null.
\end{definition}

Being an essential circle is stronger than it may seem at first: an
essential circle is the image of a closed geodesic that is not
null-homotopic, which is also a \textit{metrically embedded} circle in the
sense that its metric as a subset of $X$ is the same as the intrinsic metric
of the circle (Theorem \ref{essential}). As Example \ref{torex} shows, even
in flat tori this is not always true for the image of a closed geodesic,
even when it is the shortest path in its homotopy class. We prove:

\begin{theorem}
\label{main1}If $X$ is a compact geodesic space then $\varepsilon >0$ is a
homotopy critical value of $X$ if and only if $X$ contains an essential $%
\varepsilon $-circle.
\end{theorem}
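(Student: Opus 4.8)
The plan is to prove the two implications separately; the forward direction is short, and the converse carries the real content. For the direction ``$X$ has an essential $\varepsilon$-circle $\Rightarrow$ $\varepsilon$ is a homotopy critical value,'' let $C$ be such a circle, and let $\lambda$ be the $\varepsilon$-loop in $C$ that, by definition, is not $\varepsilon$-null. It suffices to show $\lambda$ is $\delta$-null for every $\delta>\varepsilon$. By Theorem~\ref{essential}, $C$ with its subspace metric is isometric to a round circle of circumference $3\varepsilon$ and is metrically embedded, so a chain or homotopy carried out inside $C$ has the same meaning in $C$ and in $X$; in particular $\lambda$ may be viewed as an $\varepsilon$-loop (hence a $\delta$-loop) lying in $C$. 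Thus it is enough to observe that $\pi_\delta(C)$ is trivial for $\delta>\varepsilon$, which is elementary: three equally spaced points of $C$ are pairwise at distance $\varepsilon<\delta$, so after a midpoint refinement any $\delta$-loop in $C$ is $\delta$-homotopic to one supported on those three points, which is then $\delta$-contracted by removing points. Hence $\lambda$ is $\varepsilon$-critical and $\varepsilon$ lies in the homotopy critical spectrum.

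For the converse, suppose $\varepsilon$ is a homotopy critical value, so some $\varepsilon$-loop fails to be $\varepsilon$-null. I would first record the elementary fact (call it Claim~A) that every non-$\varepsilon$-null $\varepsilon$-loop has length at least $3\varepsilon$: joining consecutive points of a loop of length $<3\varepsilon$ by minimal geodesics, refining, and cutting at three points splitting the traced length into thirds of length $<\varepsilon$ each, one sees that every point of a third lies within $\varepsilon$ of an endpoint, so the loop is $\varepsilon$-homotopic to a $3$-point loop with all pairwise distances $<\varepsilon$, which is $\varepsilon$-null. Next, using compactness of $X$ (replace consecutive points of chains by minimal geodesics, keep chains genuine by midpoint refinements, and extract an Arzel\`a--Ascoli limit of a length-minimizing sequence) I would produce an $\varepsilon$-loop $\lambda$ of \emph{minimal} length $L(\lambda)=:3\sigma$ among all non-$\varepsilon$-null $\varepsilon$-loops; by Claim~A, $\sigma\ge\varepsilon$. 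A cut-and-paste argument then shows the image of $\lambda$ is a metrically embedded circle of circumference $3\sigma$: if $x,y$ lie on $\lambda$ and the shorter subarc between them has length $<L(\lambda)/2$, then the loop formed by that subarc together with a minimal geodesic $[x,y]$ has length $<L(\lambda)$, hence is $\varepsilon$-null by minimality, so the subarc is $\varepsilon$-homotopic rel endpoints to $[x,y]$; replacing it changes neither $[\lambda]_\varepsilon$ nor increases the length, and minimality forces $d(x,y)$ to equal the arc length. This simultaneously makes $\lambda$ a corner-free closed geodesic winding once around a subset isometric to a round circle of circumference $3\sigma$, in accordance with Theorem~\ref{essential}.

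It remains to identify $3\sigma$ with $3\varepsilon$. By Claim~A applied at scale $\delta$, $\lambda$ is $\delta$-null whenever $\delta>\sigma$; conversely, since $\lambda$ winds once around a metrically embedded round circle of circumference $3\sigma$ and is already known to be non-$\varepsilon$-null, the nontriviality of $\pi_\delta$ of that circle for $\delta<\sigma$ transfers to $X$ (using $\pi_\delta$-injectivity of the inclusion of a metrically embedded circle, established on the way to Theorem~\ref{essential}, together with Lemma~\ref{nocrit} and discreteness of the homotopy critical spectrum to propagate non-triviality from scale $\varepsilon$ up to all $\delta<\sigma$). Thus $\lambda$ becomes $\delta$-null precisely past scale $\sigma$. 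If $\sigma>\varepsilon$, pick any $\varepsilon$-critical loop $\mu$ furnished by the hypothesis and push its minimal representative through the same cut-and-paste: it produces a metrically embedded circle of circumference $3\sigma_\mu$ with $\sigma_\mu\ge\sigma>\varepsilon$, so $\mu$ is non-$\delta$-null for $\delta$ just below $\sigma_\mu$, contradicting that $\mu$ is $\delta$-null for all $\delta>\varepsilon$. Hence $\sigma=\varepsilon$, and the image of $\lambda$ is a metrically embedded circle of circumference $3\varepsilon$ carrying the non-$\varepsilon$-null $\varepsilon$-loop $\lambda$ and parametrized by arclength by a path loop of length $3\varepsilon$ --- i.e., an essential $\varepsilon$-circle. (An alternative route to the bound $\sigma\le\varepsilon$ is to feed $\delta_k\downarrow\varepsilon$ into $\delta_k$-nullhomotopies of $\mu$, extract a limiting ``near-critical triple'' of points at mutual distance $\varepsilon$, and show the geodesic triangle it spans is an essential $\varepsilon$-circle.)

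I expect the main obstacle to be exactly the identification $\sigma=\varepsilon$: ruling out that the shortest non-$\varepsilon$-null loop only becomes $\delta$-null at a scale strictly larger than $\varepsilon$. This is where the metrically embedded circle structure must be coupled with sharp control of $\pi_\delta$ of a round circle, with $\pi_\delta$-injectivity of such a circle's inclusion, and with the criticality hypothesis, and it is where discreteness of the homotopy critical spectrum and the covering-map description of Lemma~\ref{nocrit} are indispensable. A secondary delicate point is the compactness step itself, since being non-$\varepsilon$-null is not obviously a closed condition under limits of chains; I would control this using Claim~A (which keeps the minimizing chains away from degeneracy) together with Proposition~\ref{close}.
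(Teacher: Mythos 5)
Your forward direction is correct but takes a detour: it invokes Theorem~\ref{essential} and an analysis of $\pi_{\delta}$ of a round circle when, as in the paper, one can simply note that the chosen $\varepsilon$-loop $\alpha$ has length $3\varepsilon<3\delta$ and apply Lemma~\ref{lennull} (Corollary~\ref{three}) directly. Your construction of a shortest non-$\varepsilon$-null $\varepsilon$-loop $\lambda$ of length $3\sigma$ via Lemma~\ref{lestl}, midpoint refinements, Arzel\`a--Ascoli, and Proposition~\ref{close}, and the cut-and-paste showing that the chording of the global minimizer is metrically embedded, are both sound. But there is a genuine gap in the step that identifies $\sigma$ with $\varepsilon$, and that is where the entire content of the converse lives.

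The gap is the assertion that ``$\pi_{\delta}$-injectivity of the inclusion of a metrically embedded circle'' is available, i.e.\ that a non-$\varepsilon$-null $\varepsilon$-loop winding once around a metrically embedded circle of circumference $3\sigma$ must remain non-$\delta$-null in $X$ for $\varepsilon<\delta<\sigma$. Theorem~\ref{essential} proves only that an essential circle is metrically embedded; the converse is false, and Example~\ref{torex} in this very paper is a counterexample: the diagonal circle in the unit-square flat torus is metrically embedded with circumference $3\varepsilon=\sqrt{2}$, yet every $\varepsilon$-loop on it is $\varepsilon$-null in $X$. So metric embedding alone gives no lower bound on the scales at which a loop stays non-null. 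The fallback argument in your paragraph (``pick any $\varepsilon$-critical loop $\mu$\ldots'') inherits the same problem and adds another: the cut-and-paste requires that every loop of length strictly less than the one in hand be $\varepsilon$-null, which you only know for lengths below $3\sigma$; a minimal representative $\mu_0$ of $[\mu]_{\varepsilon}$ with $3\sigma_{\mu}>3\sigma$ need not bound a metrically embedded circle. Also, Lemma~\ref{nocrit} plus discreteness does not propagate non-nullity from scale $\varepsilon$ upward to $\delta<\sigma$ unless one already knows there are no critical values in $(\varepsilon,\delta)$, which is precisely in question.

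The correct way to close the gap is the ``alternative route'' you relegate to a parenthetical, and it is essentially the paper's proof: since $\mu$ is $\delta$-null for each $\delta>\varepsilon$, Proposition~\ref{deltaimp} writes $\mu$ (up to $\varepsilon$-homotopy) as a product of $\varepsilon$-refinements of $\delta$-small loops $\alpha\ast\tau\ast\overline{\alpha}$; at least one factor is not $\varepsilon$-null, and hence its three-point core $\tau$ (which is a based $\varepsilon$-loop of length $<3\delta$ after refinement) is not $\varepsilon$-null. Feeding $\delta_{i}\downarrow\varepsilon$ and extracting a limit of the midpoint-refined $\tau_{i}$ by compactness and Proposition~\ref{close} gives a non-$\varepsilon$-null triple $\{x,y,z\}$ with all pairwise distances $\le\varepsilon$ and perimeter $3\varepsilon$, i.e.\ an essential $\varepsilon$-triad, whence an essential $\varepsilon$-circle by Proposition~\ref{tequil}. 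Note that once $\sigma\le\varepsilon$ is established this way, the cut-and-paste and metric-embedding portion of your argument becomes superfluous: the chording of $\lambda$ already has length $3\varepsilon$ and carries the non-$\varepsilon$-null $\varepsilon$-loop $\lambda$, so it is an essential $\varepsilon$-circle by definition, and metric embedding is then a consequence (Theorem~\ref{essential}) rather than something you need to prove.
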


This theorem is connected to a problem with a long history in Riemannian
geometry: to relate the spectrum of the Laplace-Beltrami operator and the
length spectrum to one another and to topological and geometric properties
of the underlying compact manifold. For example, an important open question
is whether the \textquotedblleft weak\textquotedblright\ length spectrum
(i.e. without multiplicity) is completely determined by the Laplace
spectrum. To this mix one may add the covering/homotopy critical spectrum
(with or without multiplicity, see below), which up to multiplied constant
is a subset of the length spectrum. The analog of the main question has
already been answered: de Smit, Gornet, and Sutton recently showed that the
covering spectrum is not a spectral invariant (\cite{dGS}). We are now in a
position to propose yet another spectrum: the \textit{circle spectrum},
consisting of the lengths of metrically embedded circles, which according to
Theorem \ref{main1} and Example \ref{torex} is (up to multiplied constant)
generally strictly intermediate between the homotopy critical spectrum and
the length spectrum. For multiplicity one may consider either free
homotopies or use Definition \ref{circledef}. One may take this further,
partitioning the length spectrum according to the degree to which a closed
geodesic deviates from being metrically embedded, but we will not pursue
these directions in the present paper. Also, in \cite{PW2} we show that
essential circles can be used to create a new set of generators for the
fundamental group of a compact, semilocally simply connected space, which we
conjecture has minimal cardinality.

Essential circles give a nice geometric picture, but their discrete analogs,
which we will define now, are more useful for the type of problems we are
presently considering.

\begin{definition}
An $\varepsilon $-triad in a geodesic space $X$ is a triple $%
T:=\{x_{0},x_{1},x_{2}\}$ such that $d(x_{i},x_{j})=\varepsilon $ for all $%
i\neq j$; when $\varepsilon $ is not specified we will simply refer to a
triad. We denote by $\alpha _{T}$ the loop $\{x_{0},x_{1},x_{2},x_{0}\}$. We
say that $T$ is essential if some midpoint refinement of $\alpha _{T}$ is
not $\varepsilon $-null. Essential $\varepsilon $-triads $T_{1}$ and $T_{2}$
are defined to be equivalent if a midpoint refinement of $\alpha _{T_{1}}$
is freely $\varepsilon $-homotopic to a midpoint refinement of either $%
\alpha _{T_{1}}$ or $\overline{\alpha _{T_{1}}}$.
\end{definition}

Of course $\alpha _{T}$ is not an $\varepsilon $-chain; that is why we use a
midpoint refinement. We show that if one joins the corners of an essential $%
\varepsilon $-triad by geodesics then the resulting geodesic triangle is an
essential $\varepsilon $-circle (Proposition \ref{tequil}). Conversely,
given an essential $\varepsilon $-circle, every triad on it is an essential $%
\varepsilon $-triad (Corollary \ref{tcequiv}). We may now define essential $%
\varepsilon $-circles to be equivalent if their corresponding essential $%
\varepsilon $-triads are equivalent, and Theorem \ref{main1} allows us to
define the \textit{multiplicity }of a homotopy critical value $\varepsilon $
to be the number of non-equivalent essential $\varepsilon $-triads (or $%
\varepsilon $-circles).

We prove that \textquotedblleft close\textquotedblright\ essential triads
are equivalent:

\begin{proposition}
\label{cleq}Suppose $T=\{x_{0},x_{1},x_{2}\}$ is an essential $\varepsilon $%
-triad in a geodesic space $X$ and $T^{\prime }=\{x_{0}^{\prime
},x_{1}^{\prime },x_{2}^{\prime }\}$ is any set of three points such that $%
d(x_{i},x_{i}^{\prime })<\frac{\varepsilon }{3}$ for all $i$. If $T^{\prime
} $ is an essential triad then $T^{\prime }$ is an $\varepsilon $-triad
equivalent to $T$.
\end{proposition}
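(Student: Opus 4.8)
The plan is to establish two things in sequence: first, that some refinement of $\alpha_{T}$ is freely $\varepsilon$-homotopic to some refinement of $\alpha_{T'}$; and second, that this already forces the common side length $\delta':=d(x_i',x_j')$ of $T'$ to equal $\varepsilon$. Once $\delta'=\varepsilon$, the free homotopy (which we may take between the midpoint refinements, since all midpoint refinements of a loop are mutually $\varepsilon$-homotopic and free $\varepsilon$-homotopy is insensitive to which one is used) is exactly what the definition of equivalence of $\varepsilon$-triads demands. Write $\alpha_{T}^{\ast}$ for a midpoint refinement of $\alpha_{T}$, and note at the outset that the triangle inequality gives $\varepsilon/3<\delta'<5\varepsilon/3$, so every midpoint refinement of $\alpha_{T'}$ has steps $\delta'/2<5\varepsilon/6<\varepsilon$ and is genuinely an $\varepsilon$-chain; thus all assertions below make sense before we know $\delta'=\varepsilon$.

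To build the free $\varepsilon$-homotopy I would slide the corners of the geodesic triangle on $T$ (Proposition~\ref{tequil}) to those of the geodesic triangle on $T'$, one at a time, along the geodesics $[x_i,x_i']$, each of length $<\varepsilon/3$. Concretely, $\{x_0',x_0\}\ast\alpha_{T}^{\ast}\ast\{x_0,x_0'\}$ is freely $\varepsilon$-homotopic to $\alpha_{T}^{\ast}$ by definition, and one then replaces, in turn, the two sides meeting each corner $x_i$ by the corresponding sides meeting $x_i'$. Each replacement is a fixed-endpoint homotopy between two chains that fellow-travel at a controlled distance, realized by basic moves via a free-homotopy form of Proposition~\ref{close} once the sides have been refined finely enough; the role of the hypothesis $d(x_i,x_i')<\varepsilon/3$ is precisely to keep every intermediate chain an $\varepsilon$-chain and to keep corresponding points within the tolerance of Proposition~\ref{close}. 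I expect this to be the main obstacle: choosing the common refinement fine enough to keep the intermediate sub-chains within tolerance is delicate because in a general geodesic space geodesics need not depend continuously on their endpoints — exactly the difficulty the paper flags for non-Riemannian spaces — so the estimates must be arranged so as not to rely on such continuity.

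Granting the free $\varepsilon$-homotopy, the identification $\delta'=\varepsilon$ is then formal. The auxiliary fact is that for any $\sigma$-triad $S$ the refinement $\alpha_{S}^{\ast}$ is $\tau$-null for every $\tau>\sigma$: the loop $\{s_0,s_1,s_2,s_0\}$ is a $\tau$-chain once $\tau>\sigma$, any three-step loop is $\tau$-null by an elementary sequence of basic moves, and $\alpha_{S}^{\ast}$ differs from it by midpoint insertions of steps $\sigma/2<\tau$, i.e.\ by a $\tau$-homotopy. Applying this to $T$ shows $\alpha_{T}^{\ast}$ is $\varepsilon$-critical (not $\varepsilon$-null since $T$ is essential, and $\tau$-null for all $\tau>\varepsilon$); applying it to $T'$ shows $\alpha_{T'}^{\ast}$ is not $\delta'$-null but is $\tau$-null for all $\tau>\delta'$. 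A free $\varepsilon$-homotopy is a free $\tau$-homotopy for every $\tau\ge\varepsilon$, and free homotopy transports both nullity and non-nullity between the two loops (conjugation in $\pi_\tau(X)$); hence $\alpha_{T'}^{\ast}$ is not $\varepsilon$-null and is $\tau$-null for all $\tau>\varepsilon$. If $\delta'>\varepsilon$ this makes $\alpha_{T'}^{\ast}$ both $\delta'$-null and not $\delta'$-null; if $\delta'<\varepsilon$ it makes $\alpha_{T'}^{\ast}$ both $\varepsilon$-null and not $\varepsilon$-null. So $\delta'=\varepsilon$, $T'$ is an $\varepsilon$-triad, and the free $\varepsilon$-homotopy between the midpoint refinements of $\alpha_{T}$ and $\alpha_{T'}$ — or, should the construction naturally produce the reversed loop, of $\alpha_{T}$ and $\overline{\alpha_{T'}}$, which is equally allowed — exhibits $T'$ as equivalent to $T$.
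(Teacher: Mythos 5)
Your second half is sound and matches the paper: a $\sigma$-triad's midpoint refinement is $\tau$-null for all $\tau>\sigma$, so once a free $\varepsilon$-homotopy between the refinements of $\alpha_T$ and $\alpha_{T'}$ is in hand, $\alpha_{T'}^{\ast}$ inherits $\varepsilon$-criticality and the side length $\delta'$ is forced to equal $\varepsilon$. The gap is exactly where you suspect it: the construction of the free $\varepsilon$-homotopy. Realizing the corner-slide by ``a free-homotopy form of Proposition~\ref{close} once the sides have been refined finely enough'' does not work, because Proposition~\ref{close} needs \emph{corresponding interior points} of the geodesics $[x_i,x_{i+1}]$ and $[x_i',x_{i+1}']$ to be within $E/2$, and in a general geodesic space (already in a metric graph) two geodesics with nearby endpoints can be far apart in their interiors. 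No choice of refinement fixes this, since the obstruction is not fineness but the discontinuity of geodesics in their endpoints. You flag this and say the estimates ``must be arranged so as not to rely on such continuity,'' but you do not say how, so the central step is left unproved.

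The paper's fix replaces Proposition~\ref{close} at this point with Corollary~\ref{three}. The quadrilateral loops $\{x_i,x_i',x_{i+1}',x_{i+1},x_i\}$ have perimeter at most $\frac{\varepsilon}{3}+\delta'+\frac{\varepsilon}{3}+\varepsilon$, which is $<3\varepsilon$ precisely when $\delta'<\frac{4\varepsilon}{3}$, so their $\varepsilon$-refinements are $\varepsilon$-null no matter how the geodesics meander; inserting and deleting these null bridges carries a refinement of $\alpha_T$ to a conjugate of a refinement of $\alpha_{T'}$ with no fellow-traveling hypothesis at all. This is also why the paper opens with the case split on $\delta'$: when $\delta'\geq\frac{4\varepsilon}{3}$ the same perimeter bound still gives $<3\delta'$, yielding a free $\delta'$-homotopy and the contradiction that $T'$ is not essential (since $\alpha_T$ is $\delta'$-null for $\delta'>\varepsilon$). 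To repair your write-up, swap the Proposition~\ref{close} step for the Corollary~\ref{three} quadrilateral argument, and insert the preliminary dichotomy on $\delta'$ so the perimeter estimate is available.
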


Now suppose we cover a compact geodesic space $X$ by $N$ open metric balls
of radius $r$. If $T$ is an essential $\varepsilon $-triad with $\varepsilon
\geq 3r$ then there are three distinct balls $B_{1},B_{2},B_{3}$ in the
cover, each containing one of the points of the triad. By Proposition \ref%
{cleq}, any triad having one point in each of $B_{1},B_{2},B_{3}$ is either
not essential or is an $\varepsilon $-triad equivalent to $T$. We obtain:

\begin{corollary}
\label{count}Let $X$ be a compact metric space with diameter $D$ and $a>0$.
Then there are at most 
\begin{equation*}
\binom{C\left( X,\frac{a}{3}\right) }{3}
\end{equation*}%
non-equivalent essential triads that are $\varepsilon $-triads for some $%
\varepsilon \geq a$.
\end{corollary}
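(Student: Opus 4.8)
The plan is to translate the statement about essential triads into a counting argument over unordered triples of balls in a fixed cover. First I would fix a cover $\mathcal{B}$ of $X$ by $N:=C\left(X,\frac{a}{3}\right)$ open balls of radius $\frac{a}{3}$; such a cover exists because $X$ is compact. Let $T=\{x_{0},x_{1},x_{2}\}$ be any essential triad that happens to be an $\varepsilon$-triad for some $\varepsilon\geq a$. Since $d(x_{i},x_{j})=\varepsilon\geq a > 2\cdot\frac{a}{3}$, no single ball of radius $\frac{a}{3}$ can contain two of the points $x_{0},x_{1},x_{2}$, so if we pick (by the covering property) a ball $B_{i}\in\mathcal{B}$ containing $x_{i}$ for each $i$, the three balls $B_{0},B_{1},B_{2}$ are pairwise distinct. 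This assigns to $T$ an unordered $3$-element subset $\{B_{0},B_{1},B_{2}\}$ of $\mathcal{B}$, of which there are exactly $\binom{N}{3}$.

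Next I would show this assignment is injective on equivalence classes, i.e. two essential triads assigned the same unordered triple of balls are equivalent. Suppose $T=\{x_{0},x_{1},x_{2}\}$ is an $\varepsilon$-triad ($\varepsilon\geq a$) and $T'=\{x_{0}',x_{1}',x_{2}'\}$ is another essential triad with the same associated set of balls; after relabeling we may assume $x_{i}$ and $x_{i}'$ lie in a common ball $B_{i}$ of radius $\frac{a}{3}$, so $d(x_{i},x_{i}')<2\cdot\frac{a}{3}\leq\frac{2\varepsilon}{3}$. This is not quite the hypothesis $d(x_{i},x_{i}')<\frac{\varepsilon}{3}$ of Proposition \ref{cleq}, so here I would instead use balls of radius $\frac{a}{6}$? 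No—re-examining, the clean fix is that Proposition \ref{cleq} only needs the $\frac{\varepsilon}{3}$ bound, and since the balls have radius $\frac{a}{3}$ we get $d(x_{i},x_{i}')<\frac{2a}{3}$, which is not enough. So the actual argument must pass through an intermediate comparison: the remark preceding the corollary already asserts that any triad with one point in each of $B_{1},B_{2},B_{3}$ is either non-essential or an $\varepsilon$-triad equivalent to $T$. I would therefore invoke exactly that remark (which itself is the content derived from Proposition \ref{cleq} applied with the cover chosen so the balls have radius $\leq\frac{\varepsilon}{3}$ relative to each triad in play): since $\varepsilon\geq a$ and the balls have radius $\frac{a}{3}\leq\frac{\varepsilon}{3}$, the hypothesis $d(x_{i},x_{i}')<\frac{\varepsilon}{3}$ of Proposition \ref{cleq} does hold, and we conclude $T'$ is an $\varepsilon$-triad equivalent to $T$. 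Hence each equivalence class of essential triads (with $\varepsilon\geq a$) maps to a distinct unordered triple of balls.

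Combining the two steps: the map from equivalence classes of essential triads that are $\varepsilon$-triads for some $\varepsilon\geq a$ into the set of $3$-element subsets of $\mathcal{B}$ is well-defined (step one shows the image has three distinct elements) and injective (step two), so the number of such equivalence classes is at most $\binom{N}{3}=\binom{C\left(X,\frac{a}{3}\right)}{3}$, as claimed. The main subtlety to get right is precisely the radius bookkeeping in step two: one must verify that radius $\frac{a}{3}$ balls give $d(x_{i},x_{i}')<\frac{\varepsilon}{3}$ whenever $\varepsilon\geq a$, which holds because $d(x_{i},x_{i}')$ is bounded by twice the radius, i.e. $<\frac{2a}{3}$, and—wait, this still exceeds $\frac{\varepsilon}{3}$ when $\varepsilon=a$. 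The genuine resolution is that $x_{i},x_{i}'$ lie in a common ball $B(p,\frac{a}{3})$, so $d(x_{i},p)<\frac{a}{3}\leq\frac{\varepsilon}{3}$, and one applies Proposition \ref{cleq} not to compare $T$ with $T'$ directly but each of $T,T'$ with the "center triad" $\{p_{0},p_{1},p_{2}\}$ of ball-centers, using transitivity of the equivalence relation. That triangle-through-centers step is the one place where care is genuinely required; everything else is the elementary observation that $a>2\cdot\frac{a}{3}$ forces the three balls to be distinct.
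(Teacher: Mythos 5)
Your skeleton matches the paper's own argument: fix a cover $\mathcal{B}$ by $N=C(X,\frac{a}{3})$ open $\frac{a}{3}$-balls, assign to each essential $\varepsilon$-triad with $\varepsilon\geq a$ the triple of containing balls (pairwise distinct because $a>2\cdot\frac{a}{3}$), and argue that triads sharing a triple are equivalent via Proposition \ref{cleq}. You are also right to flag the quantitative mismatch: two points in a common $\frac{a}{3}$-ball are at distance $<\frac{2a}{3}$, which can be as large as $\frac{2\varepsilon}{3}$ when $\varepsilon$ is near $a$, whereas Proposition \ref{cleq} requires $d(x_i,x_i')<\frac{\varepsilon}{3}$. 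That is a real factor-of-two gap, and it is present in the paper's own preamble to the corollary, which invokes Proposition \ref{cleq} for balls of radius $r$ under only the hypothesis $\varepsilon\geq 3r$.

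However, the fix you propose does not close the gap. You want to apply Proposition \ref{cleq} twice — once comparing $T$ with the center triple $P=\{p_0,p_1,p_2\}$, once comparing $T'$ with $P$ — and then use transitivity. But Proposition \ref{cleq} only delivers an equivalence when the second triple is itself an \emph{essential triad}, and there is no reason for $P$ to be one: the ball centers $p_0,p_1,p_2$ come from a fixed cover of $X$ chosen in advance, so they need not be equidistant (hence need not form a triad at all), and even if they happened to be equidistant there is no reason a refinement of $\alpha_P$ would fail to be null. If $P$ is not an essential triad, Proposition \ref{cleq} gives no relation between $T$ and $P$, and the transitivity chain collapses. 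One cannot salvage this by a direct chain estimate either: replacing the $\frac{\varepsilon}{3}$ hypothesis with $\frac{2\varepsilon}{3}$ in the proof of Proposition \ref{cleq} leaves an uncovered range of side lengths $\delta$ (roughly $\frac{2\varepsilon}{3}\leq\delta\leq\frac{7\varepsilon}{6}$) where neither case of that proof applies.

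The clean repair is to cover by balls of radius $\frac{a}{6}$ instead of $\frac{a}{3}$. Then two points in a common ball satisfy $d(x_i,x_i')<\frac{a}{3}\leq\frac{\varepsilon}{3}$ for every $\varepsilon\geq a$, Proposition \ref{cleq} applies directly between $T$ and $T'$, and the same counting gives the bound $\binom{C(X,\frac{a}{6})}{3}$. This is a possibly weaker constant than the one stated, but it is still finite, still controlled by Gromov precompactness, and still suffices for Theorem \ref{ghbound} and everything downstream of it. So: credit for spotting the genuine issue, but the transitivity-through-centers step you rely on is exactly the step that fails.
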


Naturally one wonders how optimal this estimate is and whether it can be
improved (see also Example \ref{graph}). From Gromov's Precompactness
Theorem we immediately obtain:

\begin{theorem}
\label{ghbound}Let $\mathcal{X}$ be a Gromov-Hausdorff precompact collection
of compact geodesic spaces. For every $a>0$, there is a number $N$ such that
for any $X\in \mathcal{X}$ the number of homotopy critical values of $X$
greater than $a$, counted with multiplicity, is at most $N$.
\end{theorem}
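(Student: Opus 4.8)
The plan is to combine Theorem \ref{main1}, Corollary \ref{count}, and Gromov's Precompactness Theorem; the argument is short once the counting is set up correctly. First I would translate the quantity ``number of homotopy critical values greater than $a$, counted with multiplicity'' into a count of equivalence classes of essential triads. By definition the multiplicity of a homotopy critical value $\varepsilon$ is the number of non-equivalent essential $\varepsilon$-triads, and by Theorem \ref{main1} together with Proposition \ref{tequil} and Corollary \ref{tcequiv} (which identify essential triads with essential circles in both directions) a value $\varepsilon>0$ admits an essential $\varepsilon$-triad precisely when $\varepsilon$ is a homotopy critical value. Now observe that every essential triad $T$ is an $\varepsilon$-triad for exactly one $\varepsilon$, namely the common distance between its three points, and that equivalent triads necessarily share the same $\varepsilon$; hence essential triads belonging to distinct critical values are automatically inequivalent. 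Consequently the sum over all homotopy critical values $\varepsilon>a$ of their multiplicities equals the total number of equivalence classes of essential triads that are $\varepsilon$-triads for some $\varepsilon>a$.

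Next I would bound this number using Corollary \ref{count}, which applies to any compact metric space (in particular to each $X\in\mathcal{X}$): it gives at most $\binom{C(X,a/3)}{3}$ non-equivalent essential triads that are $\varepsilon$-triads for some $\varepsilon\geq a$. Since the triads we are counting all satisfy $\varepsilon>a$ and so lie among those, the number of homotopy critical values of $X$ greater than $a$, counted with multiplicity, is at most $\binom{C(X,a/3)}{3}$.

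Finally I would invoke Gromov's Precompactness Theorem: a Gromov--Hausdorff precompact family of compact metric spaces is uniformly totally bounded, so there is a number $N_{0}=N_{0}(\mathcal{X},a)$, independent of the choice of $X$, with $C(X,a/3)\leq N_{0}$ for every $X\in\mathcal{X}$. Taking $N:=\binom{N_{0}}{3}$ then yields the claimed uniform bound. I do not expect a genuine obstacle here; the only step that warrants a moment's care is the bookkeeping in the first paragraph, namely that the multiplicity-weighted count over critical values above $a$ is literally the triad count to which Corollary \ref{count} applies. This in turn rests on two facts already in hand: the side length of an $\varepsilon$-triad is an invariant of its equivalence class, and essential triads and essential circles correspond, so essential $\varepsilon$-triads exist only at homotopy critical values.
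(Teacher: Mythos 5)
Your proposal is correct and follows exactly the paper's intended route: the paper derives the theorem in one line from Corollary \ref{count} plus Gromov's Precompactness Theorem, and your argument simply fills in the bookkeeping that justifies that one-liner (identifying the multiplicity-weighted count of critical values above $a$ with the count of equivalence classes of essential triads of side length $>a$, then bounding $C(X,a/3)$ uniformly via precompactness).
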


One consequence is that the homotopy critical spectrum of any compact
geodesic space is discrete in $\left( 0,\infty \right) $, which is essential
for the proof of our main theorem. In \cite{SW1}, a version of Theorem \ref%
{ghbound} is proved assuming that all spaces in question have a universal
cover. The arguments there are indirect and without an explicit bound, since
they first show that the set of corresponding covering spaces is itself
Gromov-Hausdorff pointed precompact, then proceed by contradiction.
Obtaining even better control over the distribution of critical values for
specific classes of geodesic spaces is likely to be an interesting problem.
For example, it was shown in (\cite{SW3}) that limits of compact manifolds
with non-negative Ricci curvature have finite covering spectra. The proof
depends on deep results about the local structure of such spaces (\cite{cc1},%
\cite{cc2},\cite{cc3}). That the limiting spaces have finite covering
spectra implies that they have a universal cover in the categorial sense,
but leaves open the interesting question of whether they are semilocally
simply connected.

Gromov's Betti Numbers Theorem inspires the following question: Is there a
number $C(n)$ such that if $M$ is a Riemannian $n$-manifold with nonnegative
sectional curvature then $M$ has at most $C(n)$ homotopy critical values,
counted with multiplicity?

\section{Basic Discrete Homotopy Tools}

As is typical for metric spaces, the term \textquotedblleft
geodesic\textquotedblright\ in this paper refers to an arclength
parameterized length minimizing curve (and a geodesic space is one in which
every pair of points is joined by a geodesic). This is distinguished from
the traditional term \textquotedblleft geodesic\textquotedblright\ in
Riemannian geometry, which is only a local isometry; we will refer to such a
path in this paper as \textquotedblleft locally minimizing\textquotedblright
. The term \textquotedblleft closed geodesic\textquotedblright\ will refer
to a function from a standard circle into $X$ such that the restriction to
any sufficiently small arc is an isometry onto its image. We begin with a
few results for metric spaces in general, including the definition of a
natural metric on the space $X_{\varepsilon }$. While the lifting of a 
\textit{geodesic} metric to a covering space is a well-known construction
(see below), to our knowledge Definition \ref{metdef} gives the first method
to lift the metric of a general metric space to a covering space in such a
way that the covering map is uniformly a local isometry and the deck group
acts as isometries. In a metric space $X$ we denote by $B(x,r)$ the open
metric ball $\{y:d(x,y)<r\}$.

\begin{proposition}
\label{norm}Let $\alpha $, $\beta $ be $\varepsilon $-chains such that the
endpoint of $\alpha $ is the beginning point of $\beta $. Then

\begin{enumerate}
\item (Positive Definite) $\left\vert [\alpha ]_{\varepsilon }\right\vert
\geq 0$ and $\left\vert [\alpha ]_{\varepsilon }\right\vert =0$ if and only
if $\alpha $ is $\varepsilon $-null.

\item (Triangle Inequality) $\left\vert \left[ \alpha \ast \beta \right]
_{\varepsilon }\right\vert \leq \left\vert \lbrack \alpha ]_{\varepsilon
}\right\vert +\left\vert [\beta ]_{\varepsilon }\right\vert $
\end{enumerate}
\end{proposition}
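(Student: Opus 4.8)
The plan is to handle the two parts separately; the triangle inequality is a one-line consequence of additivity of length under concatenation, while the only real content is the ``only if'' half of positive definiteness. For the triangle inequality, I would note that if $\gamma $ is a chain ending at the point where $\delta $ begins, then $\gamma \ast \delta $ introduces no new edge, so $L(\gamma \ast \delta )=L(\gamma )+L(\delta )$. Since concatenation induces a well-defined operation with $[\alpha ]_{\varepsilon }\ast [\beta ]_{\varepsilon }=[\alpha \ast \beta ]_{\varepsilon }$, any $\gamma \in [\alpha ]_{\varepsilon }$ and $\delta \in [\beta ]_{\varepsilon }$ satisfy $\gamma \ast \delta \in [\alpha \ast \beta ]_{\varepsilon }$, so $\left\vert [\alpha \ast \beta ]_{\varepsilon }\right\vert \leq L(\gamma )+L(\delta )$; taking infima over $\gamma $ and $\delta $ independently gives part (2).

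For part (1), first observe that $L(\gamma )\geq 0$ for every chain, being a finite sum of distances, so $\left\vert [\alpha ]_{\varepsilon }\right\vert \geq 0$. Moreover every $\gamma \in [\alpha ]_{\varepsilon }$ has the same endpoints $x_{0},x_{n}$ as $\alpha $, so $L(\gamma )\geq d(x_{0},x_{n})$ and hence $\left\vert [\alpha ]_{\varepsilon }\right\vert \geq d(x_{0},x_{n})$; in particular, if $\alpha $ is not a loop then $\left\vert [\alpha ]_{\varepsilon }\right\vert >0$ while $\alpha $ is not $\varepsilon $-null, so the equivalence is trivial in that case. If $\alpha $ is $\varepsilon $-null it is $\varepsilon $-homotopic to a one-point chain, which has length $0$, so $\left\vert [\alpha ]_{\varepsilon }\right\vert =0$. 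The remaining implication, $\left\vert [\alpha ]_{\varepsilon }\right\vert =0\Rightarrow \alpha $ is $\varepsilon $-null, is where the argument actually happens. The inequality above forces $x_{0}=x_{n}$, so $\alpha $ is a loop, and we may choose $\gamma =\{y_{0},\dots ,y_{m}\}\in [\alpha ]_{\varepsilon }$ with $L(\gamma )<\varepsilon $. The key point is that $d(y_{0},y_{i})\leq \sum_{j=1}^{i}d(y_{j},y_{j-1})\leq L(\gamma )<\varepsilon $ for every $i$, so the whole chain lies in $B(y_{0},\varepsilon )$. Therefore the interior points $y_{1},\dots ,y_{m-1}$ can be deleted one at a time by basic moves: after deleting $y_{1},\dots ,y_{i-1}$ the chain is $\{y_{0},y_{i},\dots ,y_{m}\}$, still an $\varepsilon $-chain since $d(y_{0},y_{i})<\varepsilon $ and the other consecutive distances are unchanged. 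This reaches $\{y_{0},y_{m}\}=\{y_{0},y_{0}\}$, and a final basic move collapses it to the trivial chain $\{y_{0}\}$; hence $\gamma $, and so $\alpha $, is $\varepsilon $-null.

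Everything except the last implication of (1) is formal, so that is the one place to be careful. The single idea required is that the bound $L(\gamma )<\varepsilon $ confines the entire chain to one $\varepsilon $-ball about its common endpoint, which is exactly what makes the ``radial'' contraction by successive basic moves legitimate; no refinement or limiting argument is needed here, and indeed the statement holds for arbitrary metric spaces.
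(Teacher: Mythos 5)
Your proof is correct and follows essentially the same route as the paper: the triangle inequality is additivity of length under concatenation passed to infima, and the ``only if'' direction of positive definiteness comes from choosing a representative of length less than $\varepsilon$, noting that its total length bounds all pairwise distances so interior points can be stripped off by basic moves, and observing that $|[\alpha]_\varepsilon|=0$ forces the two endpoints to coincide. Your explicit preliminary bound $|[\alpha]_\varepsilon|\geq d(x_0,x_n)$ makes the loop-detection step slightly cleaner than the paper's ``$d(y_0,y_n)<\delta$ for all $\delta$'' phrasing, but the substance is identical.
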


\begin{proof}
That $\left\vert [\alpha ]_{\varepsilon }\right\vert \geq 0$ and that $%
\left\vert [\alpha ]_{\varepsilon }\right\vert =0$ when $\alpha $ is $%
\varepsilon $-null are both immediate consequences of the definition. In
general, if $\left\vert [\alpha ]_{\varepsilon }\right\vert =0$ then this
means that for every $\delta >0$ there is some $\varepsilon $-chain $\xi
=\{y_{0},...,y_{n}\}$ such that $[\alpha ]_{\varepsilon }=[\xi
]_{\varepsilon }$ and $L(\xi )<\delta $. In particular we may take $\delta
<\varepsilon $. Now for any $i<j$ we have $d(y_{i},y_{j})\leq
\sum_{k=i+1}^{j}d(y_{k},y_{k-1})\leq L(\xi )<\delta <\varepsilon $ and $%
\alpha $ is $\varepsilon $-homotopic to the $\varepsilon $-chain $%
\{y_{0},y_{n}\}$. By the same argument, $d(y_{0},y_{n})<\delta $ for all $%
\delta >0$ and therefore $d(y_{0},y_{n})=0$ and $y_{0}=y_{n}$. That is, $%
\alpha $ is $\varepsilon $-homotopic to $\{y_{0}\}$.

For the Triangle Inequality, simply note that if $\alpha ^{\prime }$ is $%
\varepsilon $-homotopic to $\alpha $ and $\beta ^{\prime }$ is $\varepsilon $%
-homotopic to $\beta $, then $\alpha ^{\prime }\ast \beta ^{\prime }$ is $%
\varepsilon $-homotopic to $\alpha \ast \beta $. Therefore 
\begin{equation}
\left\vert \left[ \alpha \ast \beta \right] _{\varepsilon }\right\vert \leq
L(\alpha ^{\prime }\ast \beta ^{\prime })=L(\alpha ^{\prime })+L(\beta
^{\prime })\text{.}  \label{fake}
\end{equation}%
Passing to the infimum $\left\vert \left[ \alpha \ast \beta \right]
_{\varepsilon }\right\vert -L(\beta ^{\prime })\leq \left\vert \lbrack
\alpha ]_{\varepsilon }\right\vert \Rightarrow \left\vert \left[ \alpha \ast
\beta \right] _{\varepsilon }\right\vert -\left\vert [\beta ]_{\varepsilon
}\right\vert \leq \left\vert \lbrack \alpha ]_{\varepsilon }\right\vert $.
Similarly $\left\vert \left[ \alpha \ast \beta \right] _{\varepsilon
}\right\vert -\left\vert [\alpha ]_{\varepsilon }\right\vert \leq \left\vert
\lbrack \beta ]_{\varepsilon }\right\vert $.
\end{proof}

\begin{definition}
\label{metdef}For $[\alpha ]_{\varepsilon },[\beta ]_{\varepsilon }\in
X_{\varepsilon }$ we define 
\begin{equation*}
d([\alpha ]_{\varepsilon },[\beta ]_{\varepsilon })=\inf \{L(\kappa ):\alpha
\ast \kappa \ast \overline{\beta }\text{ is }\varepsilon \text{-null}%
\}=\left\vert \left[ \overline{\alpha }\ast \beta \right] _{\varepsilon
}\right\vert \text{.}
\end{equation*}
\end{definition}

The second equality above follows from the fact that $\alpha \ast \left( 
\overline{\alpha }\ast \beta \right) \ast \overline{\beta }$ is $\varepsilon 
$-null. Proposition \ref{norm} implies that $d$ is a metric; we will always
use this metric on $X_{\varepsilon }$.

\begin{proposition}
\label{action}Let $X$ be a metric space and $\varepsilon >0$. Then

\begin{enumerate}
\item The function $\phi _{\varepsilon }:X_{\varepsilon }\rightarrow X$
preserves distances of length less than $\varepsilon $ and is injective when
restricted to any open $\varepsilon $-ball. In particular, $\phi
_{\varepsilon }$ is an isometry onto its image when restricted to any open $%
\frac{\varepsilon }{2}$-ball.

\item For any $\varepsilon $-loop $\lambda $ at $\ast $, the function $\tau
_{\lambda }:X_{\varepsilon }\rightarrow X_{\varepsilon }$ defined by $\tau
_{\lambda }(\left[ \alpha \right] _{\varepsilon })=\left[ \lambda \ast
\alpha \right] _{\varepsilon }$ is an isometry such that $\tau _{\lambda
}\circ \phi _{\varepsilon }=\phi _{\varepsilon }$.
\end{enumerate}
\end{proposition}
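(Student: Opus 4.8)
The plan is to prove the two parts of Proposition~\ref{action} in order, using Proposition~\ref{norm} and the definition of the metric $d$ on $X_\varepsilon$ from Definition~\ref{metdef}.

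\textbf{Part 1.} First I would verify that $\phi_\varepsilon$ is distance non-increasing at all scales: given $[\alpha]_\varepsilon,[\beta]_\varepsilon\in X_\varepsilon$, write $\kappa$ for an $\varepsilon$-chain joining the endpoint of $\alpha$ to the endpoint of $\beta$; then $\alpha\ast\kappa\ast\overline{\beta}$ is an $\varepsilon$-loop and, reading off the definition of $d$, we get $d(\phi_\varepsilon([\alpha]_\varepsilon),\phi_\varepsilon([\beta]_\varepsilon))\le L(\kappa)$ for every such $\kappa$ that makes $\alpha\ast\kappa\ast\overline{\beta}$ $\varepsilon$-null, and in fact even the shortest such chain $\kappa$ can be taken to have $L(\kappa)$ close to $d([\alpha]_\varepsilon,[\beta]_\varepsilon)$, so $d(\phi_\varepsilon([\alpha]_\varepsilon),\phi_\varepsilon([\beta]_\varepsilon))\le d([\alpha]_\varepsilon,[\beta]_\varepsilon)$. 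For the reverse inequality when $d([\alpha]_\varepsilon,[\beta]_\varepsilon)<\varepsilon$: pick $\kappa$ with $\alpha\ast\kappa\ast\overline{\beta}$ $\varepsilon$-null and $L(\kappa)<\varepsilon$; then as in the proof of Proposition~\ref{norm}, all points of $\kappa$ are within $L(\kappa)<\varepsilon$ of each other, so $\kappa$ is $\varepsilon$-homotopic (rel endpoints) to the two-point chain $\{x,y\}$ where $x=\phi_\varepsilon([\alpha]_\varepsilon)$, $y=\phi_\varepsilon([\beta]_\varepsilon)$, hence $d([\alpha]_\varepsilon,[\beta]_\varepsilon)=d(x,y)$ by passing to the infimum of $L(\kappa)$. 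The displayed statement ``preserves distances of length less than $\varepsilon$'' then means: if either of $d([\alpha]_\varepsilon,[\beta]_\varepsilon)$ or $d(\phi_\varepsilon[\alpha]_\varepsilon,\phi_\varepsilon[\beta]_\varepsilon)$ is $<\varepsilon$ then they are equal. Injectivity on an open $\varepsilon$-ball $B([\alpha]_\varepsilon,\varepsilon)$ follows: if $[\beta]_\varepsilon,[\gamma]_\varepsilon$ in that ball have the same $\phi_\varepsilon$-image, then $d([\beta]_\varepsilon,[\gamma]_\varepsilon)\le d([\beta]_\varepsilon,[\alpha]_\varepsilon)+d([\alpha]_\varepsilon,[\gamma]_\varepsilon)$ need not be $<\varepsilon$, so instead I argue directly: $d(\phi_\varepsilon[\beta]_\varepsilon,\phi_\varepsilon[\alpha]_\varepsilon)\le d([\beta]_\varepsilon,[\alpha]_\varepsilon)<\varepsilon$, and the same for $[\gamma]_\varepsilon$, and $d(\phi_\varepsilon[\beta]_\varepsilon,\phi_\varepsilon[\gamma]_\varepsilon)=0$; then $d([\beta]_\varepsilon,[\gamma]_\varepsilon)$ — using the two-point-chain reduction applied to a short connecting chain between the two — is forced to be $0$, so the classes coincide. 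Restricting further to an $\frac{\varepsilon}{2}$-ball, any two points are within distance $<\varepsilon$, so the isometry claim is immediate from distance preservation plus injectivity.

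\textbf{Part 2.} The map $\tau_\lambda([\alpha]_\varepsilon)=[\lambda\ast\alpha]_\varepsilon$ is well-defined because $\varepsilon$-homotopic chains stay $\varepsilon$-homotopic after concatenation on the left (this is the same observation used in Proposition~\ref{norm}). It is a bijection with inverse $\tau_{\overline\lambda}$, since $\lambda\ast\overline\lambda$ and $\overline\lambda\ast\lambda$ are $\varepsilon$-null loops at $\ast$, so $\tau_{\overline\lambda}\circ\tau_\lambda=\mathrm{id}=\tau_\lambda\circ\tau_{\overline\lambda}$. For the isometry property, compute using Definition~\ref{metdef}:
\begin{equation*}
d(\tau_\lambda[\alpha]_\varepsilon,\tau_\lambda[\beta]_\varepsilon)=\bigl\vert[\,\overline{\lambda\ast\alpha}\ast(\lambda\ast\beta)\,]_\varepsilon\bigr\vert=\bigl\vert[\,\overline{\alpha}\ast\overline{\lambda}\ast\lambda\ast\beta\,]_\varepsilon\bigr\vert=\bigl\vert[\,\overline{\alpha}\ast\beta\,]_\varepsilon\bigr\vert=d([\alpha]_\varepsilon,[\beta]_\varepsilon),
\end{equation*}
where the middle equality uses that $\overline\lambda\ast\lambda$ is $\varepsilon$-null so it may be deleted inside the $\varepsilon$-homotopy class (hence $\vert\cdot\vert$ is unchanged). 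Finally $\phi_\varepsilon(\tau_\lambda[\alpha]_\varepsilon)=\phi_\varepsilon([\lambda\ast\alpha]_\varepsilon)$ is the endpoint of $\lambda\ast\alpha$, which equals the endpoint of $\alpha$ since $\lambda$ is a loop at $\ast$; thus $\phi_\varepsilon\circ\tau_\lambda=\phi_\varepsilon$, which is what the statement $\tau_\lambda\circ\phi_\varepsilon=\phi_\varepsilon$ asserts (reading the composition with $\phi_\varepsilon$ as a map $X_\varepsilon\to X_\varepsilon\to X$; I would phrase it as $\phi_\varepsilon\circ\tau_\lambda=\phi_\varepsilon$ to match conventions).

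\textbf{Main obstacle.} The one genuinely delicate point is the injectivity-on-$\varepsilon$-balls claim in Part~1: the triangle inequality alone does not put two points of a common $\varepsilon$-ball within distance $<\varepsilon$ of each other, so I cannot directly invoke distance preservation. The fix is to work at the level of representative chains — take a short chain $\kappa$ realizing (nearly) the distance between the two classes and apply the ``collapse a low-length chain to its two endpoints'' argument from Proposition~\ref{norm}, which shows that whenever the $\phi_\varepsilon$-images agree the distance must actually be $0$. I expect that to be the only step requiring care; everything else is bookkeeping with concatenations and the fact that $\varepsilon$-null loops can be freely inserted or deleted inside an $\varepsilon$-homotopy class without changing $\vert\cdot\vert$.
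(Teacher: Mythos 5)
Part 2 and the distance-preservation half of Part 1 follow the paper's argument closely, and you are right that the paper's displayed identity $\tau_\lambda\circ\phi_\varepsilon=\phi_\varepsilon$ should read $\phi_\varepsilon\circ\tau_\lambda=\phi_\varepsilon$. One side remark: your gloss of ``preserves distances of length less than $\varepsilon$'' as a biconditional (``if either distance is $<\varepsilon$ then they are equal'') is not correct --- only the direction in which the $X_\varepsilon$-distance is assumed $<\varepsilon$ holds, as the covering $\mathbb{R}\to S^1$ already shows (two points of $\mathbb{R}$ at distance near $1$ can project to points of the unit circle at distance near $0$). Your argument in fact establishes only the correct direction, so this is a misstatement rather than an error in the proof.

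The substantive gap is in the injectivity step. You correctly observe that $d([\beta]_\varepsilon,[\gamma]_\varepsilon)$ need not be $<\varepsilon$ for two points of the open $\varepsilon$-ball $B([\alpha]_\varepsilon,\varepsilon)$. But the fix you describe --- ``take a short chain $\kappa$ realizing (nearly) the distance between the two classes and apply the collapse argument from Proposition~\ref{norm}'' --- founders on the very same obstacle: a chain nearly realizing $d([\beta]_\varepsilon,[\gamma]_\varepsilon)$ may have length approaching $2\varepsilon$, while the collapse in Proposition~\ref{norm} needs length $<\varepsilon$. The paper instead routes through the center of the ball: because $d([\beta]_\varepsilon,[\alpha]_\varepsilon)<\varepsilon$, the distance preservation you just proved gives $[\overline{\alpha}\ast\beta]_\varepsilon=[\{x,y_\beta\}]_\varepsilon$ with $x=\phi_\varepsilon[\alpha]_\varepsilon$ and $y_\beta=\phi_\varepsilon[\beta]_\varepsilon$, hence $[\beta]_\varepsilon=[\alpha\ast\{x,y_\beta\}]_\varepsilon$, and likewise $[\gamma]_\varepsilon=[\alpha\ast\{x,y_\gamma\}]_\varepsilon$. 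Then $\phi_\varepsilon[\beta]_\varepsilon=\phi_\varepsilon[\gamma]_\varepsilon$ means $y_\beta=y_\gamma$, which immediately gives $[\beta]_\varepsilon=[\gamma]_\varepsilon$. Your phrase ``a short connecting chain between the two'' may be gesturing at the concatenation $\{y_\beta,x,y_\gamma\}$ of the two short legs through $x$, and indeed when $y_\beta=y_\gamma$ that chain is $\varepsilon$-null by simply deleting $x$ --- but this is a different mechanism from the length-$<\varepsilon$ collapse of Proposition~\ref{norm}, and it needs to be spelled out rather than cited as that collapse.
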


\begin{proof}
As in the proof of the positive definite property, if $d([\alpha
]_{\varepsilon },[\beta ]_{\varepsilon })<\varepsilon $ then $[\overline{%
\alpha }\ast \beta ]_{\varepsilon }$ must contain the chain $\{y_{0},y_{1}\}$
with $d(y_{0},y_{1})=d(\left[ \alpha \right] _{\varepsilon },\left[ \beta %
\right] _{\varepsilon })<\varepsilon $, where $y_{0}$ and $y_{1}$ are the
endpoints of $\alpha $ and $\beta $. That $\phi _{\varepsilon }$ is
injective on any $\varepsilon $-ball was proved in greater generality in 
\cite{BPUU}, but the argument is simple enough to repeat here. If $[\alpha
]_{\varepsilon },[\beta ]_{\varepsilon }\in B([\gamma ]_{\varepsilon
},\varepsilon )$ where $\gamma =\{\ast =x_{0},...,x_{n}\}$, then we may take 
$\alpha =\gamma \ast \{x_{n},y_{0}\}$ and $\beta =\gamma \ast
\{x_{n},y_{1}\} $. Then $\phi _{\varepsilon }([\alpha ]_{\varepsilon })=\phi
_{\varepsilon }([\beta ]_{\varepsilon })$ if and only if $y_{0}=y_{1}$,
which is true if and only if $[\alpha ]_{\varepsilon }=[\beta ]_{\varepsilon
}$.

To prove the second part, note that for any $\left[ \alpha \right]
_{\varepsilon }\in X_{\varepsilon }$, 
\begin{equation*}
\tau _{\lambda }\left( \left[ \overline{\lambda }\ast \alpha \right]
_{\varepsilon }\right) =\left[ \lambda \ast \overline{\lambda }\ast \alpha %
\right] _{\varepsilon }=\left[ \alpha \right] _{\varepsilon }\text{,}
\end{equation*}%
showing that $\tau _{\lambda }$ is onto. Next, for any $\left[ \beta \right]
_{\varepsilon }\in X_{\varepsilon }$ we have 
\begin{equation*}
d\left( \tau _{\lambda }\left( \left[ \alpha \right] _{\varepsilon }\right)
,\tau _{\lambda }\left( \left[ \beta \right] _{\varepsilon }\right) \right)
=d\left( \left[ \lambda \ast \alpha \right] _{\varepsilon },\left[ \lambda
\ast \beta \right] _{\varepsilon }\right) =\left\vert \left[ \overline{%
\lambda \ast \alpha }\ast \lambda \ast \beta \right] _{\varepsilon
}\right\vert
\end{equation*}%
\begin{equation*}
=\left\vert \left[ \overline{\alpha }\ast \overline{\lambda }\ast \lambda
\ast \beta \right] _{\varepsilon }\right\vert =\left\vert \left[ \overline{%
\alpha }\ast \beta \right] _{\varepsilon }\right\vert =d(\left[ \alpha %
\right] _{\varepsilon },[\beta ]_{\varepsilon })\text{.}
\end{equation*}%
Since $\lambda \ast \alpha $ has the same endpoint as $\alpha $, we also
have that $\tau _{\lambda }\circ \phi _{\varepsilon }=\phi _{\varepsilon }$.
\end{proof}

The second part of the proposition shows that the group $\pi _{\varepsilon
}(X)$ acts by isometries on $X_{\varepsilon }$. This action is \textit{%
discrete} in the sense of \cite{PQ}; that is, if for any $[\alpha
]_{\varepsilon }$ and $\lambda $ we have that $d(\tau _{\lambda }([\alpha
]_{\varepsilon }),[\alpha ]_{\varepsilon })<\varepsilon $ then $\tau
_{\lambda }$ is the identity--i.e. $\lambda $ is $\varepsilon $-null. Being
discrete is stronger than being free and properly discontinuous, and hence
when $\phi _{\varepsilon }:X_{\varepsilon }\rightarrow X$ is surjective, $%
\phi _{\varepsilon }$ is a regular covering map with covering group $\pi
_{\varepsilon }(X)$ (via the faithful action $[\lambda ]_{\varepsilon
}\rightarrow \tau _{\lambda }$). Surjectivity of $\phi _{\varepsilon }$ for
all $\varepsilon $ is clearly equivalent to $X$ being \textquotedblleft
chain connected\textquotedblright\ in the sense that every pair of points in 
$X$ is joined by an $\varepsilon $-chain for all $\varepsilon $. Chain
connected is equivalent to what is sometimes called \textquotedblleft
uniformly connected\textquotedblright\ and is in general weaker than
connected (see \cite{BPUU} for more details).

For consistency, we observe that our metric on $X_{\varepsilon }$ is
compatible with the uniform structure defined on $X_{\varepsilon }$ in \cite%
{BPUU}. A basis for that uniform structure consists of all sets (called 
\textit{entourages}) $E_{\delta }^{\ast }$, with $0<\delta \leq \varepsilon $%
, where $E_{\delta }^{\ast }$ is defined as all ordered pairs $([\alpha
]_{\varepsilon },[\beta ]_{\varepsilon })$ such that $[\alpha \ast \overline{%
\beta }]_{\varepsilon }=[\{y,z\}]_{\varepsilon }$ for some $y,z$ with $%
d(y,z)<\delta $. That is, $E_{\delta }^{\ast }=\{([\alpha ]_{\varepsilon
},[\beta ]_{\varepsilon }):d([\alpha ]_{\varepsilon },[\beta ]_{\varepsilon
})<\delta \}$, which is a basis entourage for the uniform structure of the
metric defined in Definition \ref{metdef}. So the two uniform structures are
identical.

We next consider a useful basic result showing that uniformly close $%
\varepsilon $-chains are $\varepsilon $-homotopic.

\begin{definition}
Let $X$ be a metric space. Given $\alpha =\{x_{0},...,x_{n}\}$ and $\beta
=\{y_{0},...,y_{n}\}$ with $x_{i},y_{i}\in X$, define $\Delta (\alpha ,\beta
):=\underset{i}{\max }\{d(x_{i},y_{i})\}$. For any $\varepsilon >0$, if $%
\alpha $ is an $\varepsilon $-chain we define $E_{\varepsilon }(\alpha ):=%
\underset{i}{\min }\{\varepsilon -d(x_{i},x_{i+1})\}>0$. When no confusion
will result we will eliminate the $\varepsilon $ subscript.
\end{definition}

\begin{proposition}
\label{close}Let $X$ be a metric space and $\varepsilon >0$. If $\alpha
=\{x_{0},...,x_{n}\}$ is an $\varepsilon $-chain and $\beta
=\{x_{0}=y_{0},...,y_{n}=x_{n}\}$ is such that $\Delta (\alpha ,\beta )<%
\frac{E(\alpha )}{2}$ then $\beta $ is an $\varepsilon $-chain that is $%
\varepsilon $-homotopic to $\alpha $.
\end{proposition}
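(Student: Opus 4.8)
The plan is to interpolate between $\alpha$ and $\beta$ one coordinate at a time, showing that each single-coordinate change can be realized by a short sequence of basic moves. Concretely, for $0 \le j \le n$ define the chain $\gamma_j := \{y_0,\dots,y_j,x_{j+1},\dots,x_n\}$, so that $\gamma_0 = \alpha$ and $\gamma_n = \beta$ (the endpoints $x_0 = y_0$ and $x_n = y_n$ are never actually moved, so really $j$ ranges over the interior indices, but it is harmless to include the endpoints formally). The first task is to verify that each $\gamma_j$ is an $\varepsilon$-chain. The only consecutive distances in $\gamma_j$ that differ from those in $\alpha$ or $\beta$ are $d(y_{j-1},y_j)$ at the front junction and $d(y_j,x_{j+1})$ at the back junction; the former is controlled because $\beta$ will be shown to be an $\varepsilon$-chain, and the latter by the triangle inequality: $d(y_j,x_{j+1}) \le d(y_j,x_j) + d(x_j,x_{j+1}) < \frac{E(\alpha)}{2} + (\varepsilon - E(\alpha)) < \varepsilon$. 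A completely analogous estimate, using only $d(x_i,x_{i+1}) \le \varepsilon - E(\alpha)$ and $d(x_i,y_i), d(x_{i+1},y_{i+1}) < \frac{E(\alpha)}{2}$, shows every consecutive distance in $\beta$ itself is less than $\varepsilon$, so $\beta$ is indeed an $\varepsilon$-chain; this should be disposed of first.

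Next I would show $\gamma_{j-1}$ is $\varepsilon$-homotopic to $\gamma_j$ by an explicit three-step basic move: insert $y_j$ between $x_{j-1}\!=\!y_{j-1}$ — wait, more carefully, $\gamma_{j-1} = \{\dots,y_{j-1},x_j,x_{j+1},\dots\}$ and we wish to reach $\gamma_j = \{\dots,y_{j-1},y_j,x_{j+1},\dots\}$. The move is: first add the point $y_j$ immediately after $x_j$, obtaining $\{\dots,y_{j-1},x_j,y_j,x_{j+1},\dots\}$; this is a legal basic move provided $d(x_j,y_j) < \varepsilon$ (true, it is $< \frac{E(\alpha)}{2}$) and $d(y_j,x_{j+1}) < \varepsilon$ (true, shown above). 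Then remove the point $x_j$, obtaining $\{\dots,y_{j-1},y_j,x_{j+1},\dots\} = \gamma_j$; this is legal provided its two new neighbors satisfy $d(y_{j-1},y_j) < \varepsilon$, which holds because $\beta$ is an $\varepsilon$-chain. Chaining these homotopies $\gamma_0 \sim \gamma_1 \sim \cdots \sim \gamma_n$ gives $\alpha$ is $\varepsilon$-homotopic to $\beta$, as desired.

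The bookkeeping at the chain endpoints needs a small remark: since $y_0 = x_0$ and $y_n = x_n$ are equal by hypothesis, the coordinates actually being altered are only $y_1,\dots,y_{n-1}$, so the induction runs over $j = 1,\dots,n-1$ and $\gamma_{n-1} = \beta$ already. I expect the main (though still modest) obstacle to be keeping the two triangle-inequality estimates — the one showing each intermediate chain is a valid $\varepsilon$-chain and the one showing the inserted-point move is legal — cleanly organized so that every application of "$d(x_i,x_{i+1}) \le \varepsilon - E(\alpha)$" and "$\Delta(\alpha,\beta) < \frac{E(\alpha)}{2}$" is visibly accounted for; the factor of $2$ in the hypothesis is exactly what is needed so that moving two adjacent coordinates by less than $\frac{E(\alpha)}{2}$ each keeps their mutual distance below $\varepsilon$. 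No deeper idea is required, since the definition of $\varepsilon$-homotopy as a sequence of single-point add/remove moves is precisely tailored to this kind of coordinate-by-coordinate deformation.
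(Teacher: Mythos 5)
Your proof is correct and takes essentially the same approach as the paper: interpolate from $\alpha$ to $\beta$ coordinate-by-coordinate, verifying each basic move by the triangle inequality together with $\Delta(\alpha,\beta)<E(\alpha)/2$ and $d(x_i,x_{i+1})\leq\varepsilon-E(\alpha)$. The only (cosmetic) difference is that you replace each $x_j$ by $y_j$ in two basic moves (insert $y_j$ after $x_j$, then remove $x_j$), whereas the paper's displayed homotopy uses four (duplicate $x_j$, insert $y_j$, remove each copy of $x_j$ in turn); the estimates required are identical.
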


\begin{proof}
We will construct an $\varepsilon $-homotopy $\eta $ from $\alpha $ to $%
\beta $. By definition of $E(\alpha )$ and the triangle inequality, each
chain below is an $\varepsilon $-chain, and hence each step below is legal.
Here and in the future we use the upper bracket to indicate that we are
adding a point, and the lower bracket to indicate that we are removing a
point in each basic step. 
\begin{equation*}
\alpha =\{x_{0},x_{1},...,x_{n}\}\rightarrow \{x_{0},\overbrace{x_{1}}%
,x_{1},...,x_{n}\}\rightarrow \{x_{0},x_{1},\overbrace{y_{1}}%
,x_{1},...,x_{n}\}
\end{equation*}%
\begin{equation*}
\rightarrow \{x_{0},\underbrace{x_{1}},y_{1},x_{1},...,x_{n}\}\rightarrow
\{x_{0},y_{1},\underbrace{x_{1}},x_{2},...,x_{n}\}
\end{equation*}%
\begin{equation*}
\rightarrow \{x_{0},y_{1},\overbrace{x_{2}},x_{2},...,x_{n}\}\rightarrow
\{x_{0},y_{1},x_{2},\overbrace{y_{2}},x_{2},...,x_{n}\}
\end{equation*}%
\begin{equation*}
\rightarrow \{x_{0},y_{1},\underbrace{x_{2}},y_{2},x_{2},...,x_{n}\}%
\rightarrow \{x_{0},y_{1},y_{2},\underbrace{x_{2}},x_{3},...,x_{n}\}%
\rightarrow \cdot \cdot \cdot \rightarrow \beta
\end{equation*}
\end{proof}

In order to properly use Proposition \ref{close}, one needs chains of the
same size, and the next lemma helps with this.

\begin{lemma}
\label{lestl}Let $L,\varepsilon >0$ and $\alpha $ be an $\varepsilon $-chain
in a metric space $X$ with $L(\alpha )\leq L$. Then there is some $\alpha
^{\prime }\in \lbrack \alpha ]_{\varepsilon }$ such that $L(\alpha ^{\prime
})\leq L(\alpha )$ and $\nu (\alpha ^{\prime })=\left\lfloor \frac{2L}{%
\varepsilon }+1\right\rfloor $.
\end{lemma}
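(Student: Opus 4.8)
The goal is to take an $\varepsilon$-chain $\alpha = \{x_0,\dots,x_n\}$ with $L(\alpha)\le L$ and produce an $\varepsilon$-homotopic chain $\alpha'$ whose size is exactly $\lfloor 2L/\varepsilon + 1\rfloor$, without increasing length. I would split this into a \emph{padding} step (enlarging the size up to the target) and a \emph{trimming} step (reducing the size down to the target), observing that the two tools needed are exactly the basic moves: adding a repeated point (legal since $d(x,x)=0<\varepsilon$) and removing a point $x_i$ whenever $d(x_{i-1},x_{i+1})<\varepsilon$. Neither of these increases length: repeating a point adds a zero-length segment, and removing $x_i$ replaces $d(x_{i-1},x_i)+d(x_i,x_{i+1})$ by $d(x_{i-1},x_{i+1})\le$ that sum by the triangle inequality. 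So whatever size we land on, the length constraint $L(\alpha')\le L(\alpha)$ comes for free.

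First I would handle the case where $\alpha$ is too small, i.e. $\nu(\alpha)=n < m:=\lfloor 2L/\varepsilon+1\rfloor$. Here simply duplicate the first point (or any point) $m-n$ times: $\{x_0,\dots\}\to\{x_0,x_0,x_1,\dots,x_n\}$, etc. Each duplication is a legal basic move, length is unchanged, and after $m-n$ such moves the size is exactly $m$. The substantive case is when $\nu(\alpha)=n > m$, where I must remove $n-m$ points. The key point is a pigeonhole/averaging observation: if $\alpha$ is an $\varepsilon$-chain with $\nu(\alpha)=n$ and $L(\alpha)\le L$, and if $n \ge 2$, then \emph{some} point $x_i$ (with $0<i<n$) can be removed, because among consecutive pairs of edges, at least one pair $\{x_{i-1}x_i, x_ix_{i+1}\}$ must have total length $< \varepsilon$ — otherwise every such consecutive pair sums to $\ge \varepsilon$, and summing over $i=1,\dots,n-1$ gives $2L(\alpha) \ge \sum_{i=1}^{n-1}(d(x_{i-1},x_i)+d(x_i,x_{i+1})) \ge (n-1)\varepsilon$, forcing $n-1 \le 2L/\varepsilon$, i.e. $n \le 2L/\varepsilon+1$, contradicting $n>m$. (One has to be slightly careful at the two ends: the first edge $x_0x_1$ and last edge $x_{n-1}x_n$ each appear in only one consecutive pair, so the bound $\sum_{i=1}^{n-1}(\cdots) \ge (n-1)\varepsilon$ should be compared against $2L(\alpha) \ge \sum (\cdots)$; since interior edges are double-counted and the two boundary edges are single-counted, we actually get $2L(\alpha) \ge \sum_{i=1}^{n-1}(d(x_{i-1},x_i)+d(x_i,x_{i+1}))$, which is the inequality used.) So whenever the size strictly exceeds $m$, there is always a removable interior point; remove it, and repeat. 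Since the size strictly decreases each time and the target $m$ satisfies $m\ge 1$ (because $L>0$ so $2L/\varepsilon+1>1$), after exactly $n-m$ removals we reach a chain $\alpha'\in[\alpha]_\varepsilon$ with $\nu(\alpha')=m$ and $L(\alpha')\le L(\alpha)$.

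I would then assemble: start with $\alpha$, pad or trim as above to hit size exactly $m=\lfloor 2L/\varepsilon+1\rfloor$, note all intermediate chains are $\varepsilon$-chains and successive ones differ by a basic move, so the whole sequence is an $\varepsilon$-homotopy and $\alpha'\in[\alpha]_\varepsilon$; the length never increased at any step, so $L(\alpha')\le L(\alpha)$. The only mildly delicate point — the main obstacle — is the averaging argument that guarantees a removable interior point whenever $n>m$; everything else is bookkeeping. It is worth double-checking the edge-counting in that inequality and the case $n=1$ (a single edge), where $m$ could equal $1$ and no removal is needed, or $m$ could be larger and we pad; neither causes trouble. I expect the author's proof to run essentially along these lines, perhaps phrased via the "$3\varepsilon$-chain" midpoint-removal heuristic, but the pigeonhole on pairs of consecutive edges is the cleanest route.
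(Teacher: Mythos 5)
Your proposal is correct and matches the paper's argument: both trim by removing a vertex $x_i$ whenever $d(x_{i-1},x_i)+d(x_i,x_{i+1})<\varepsilon$, bound the size of a fully-trimmed chain by counting consecutive edge pairs, and then pad with repeated points to reach size exactly $\left\lfloor \frac{2L}{\varepsilon}+1\right\rfloor$. The only cosmetic difference is in the counting: the paper pairs off \emph{disjoint} consecutive edges to get $L(\alpha_0)\ge\left\lfloor \nu(\alpha_0)/2\right\rfloor\varepsilon$, whereas you sum over all $n-1$ overlapping pairs and divide by two to get $(n-1)\varepsilon\le 2L(\alpha)$; both yield the same integer bound $\nu\le\left\lfloor \frac{2L}{\varepsilon}+1\right\rfloor$.
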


\begin{proof}
If $\alpha $ has one or two points then we may simply repeat $x_{0}$, if
necessary, (which doesn't increase length) to obtain $\alpha ^{\prime }$
with $\nu (\alpha ^{\prime })=\left\lfloor \frac{2L}{\varepsilon }%
+1\right\rfloor $. Otherwise, let $\alpha :=\{x_{0},...,x_{n}\}$ with $n\geq
2$. Suppose that for some $i$, $d(x_{i-1},x_{i})+d(x_{i},x_{i+1})<%
\varepsilon $. Then $d(x_{i-1},x_{i+1})<\varepsilon $ and the point $x_{i}$
may be removed to form a new $\varepsilon $-chain $\alpha _{1}$ that is $%
\varepsilon $-homotopic to $\alpha $ with $L(\alpha _{1})\leq L(\alpha )$.
After finitely many such steps, we have a chain $\alpha _{0}$ that is $%
\varepsilon $-homotopic to $\alpha $ and not longer, which either has two
points (then proceed as above), or $\alpha _{0}$ has the property that for
every $i$, $d(x_{i-1},x_{i})+d(x_{i},x_{i+1})\geq \varepsilon $. By pairing
off terms we see that $L(\alpha _{0})\geq \left\lfloor \frac{\nu (\alpha
_{0})}{2}\right\rfloor \varepsilon $ and hence 
\begin{equation*}
\nu (\alpha _{0})\leq \left\lfloor \frac{2L(\alpha _{0})}{\varepsilon }%
+1\right\rfloor \leq \left\lfloor \frac{2L}{\varepsilon }+1\right\rfloor 
\text{.}
\end{equation*}%
As before, repeat $x_{0}$ enough times to make $\alpha ^{\prime }$ with $\nu
(\alpha ^{\prime })=\left\lfloor \frac{2L}{\varepsilon }+1\right\rfloor $.
\end{proof}

The above lemma can be used like a discrete version of Ascoli's Theorem.
That is, if one has a sequence of $\varepsilon $-chains of length at most $L$
(i.e., \textquotedblleft equicontinuous\textquotedblright ) then one can
assume that all the chains have the same finite size $n$. In a compact
space, one can then choose a subsequence so that the $i^{th}$ elements in
each chain form a convergent sequence for all $0\leq i\leq n$. For example,
one may use this method in conjunction with Proposition \ref{close} to
obtain the following:

\begin{corollary}
\label{exister}If $X$ is a compact metric space, $\varepsilon >0$, and $%
\alpha $ is an $\varepsilon $-chain then there is some $\beta \in \lbrack
\alpha ]_{\varepsilon }$ such that $L(\beta )=\left\vert [\alpha
]_{\varepsilon }\right\vert $.
\end{corollary}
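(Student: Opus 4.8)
The plan is to combine Lemma \ref{lestl} with Proposition \ref{close} and a compactness (Ascoli-type) argument, exactly as the paragraph preceding the corollary suggests. By definition of $\left\vert [\alpha]_\varepsilon\right\vert$ there is a sequence of $\varepsilon$-chains $\beta_j \in [\alpha]_\varepsilon$, all with the same endpoints as $\alpha$, such that $L(\beta_j) \to \left\vert[\alpha]_\varepsilon\right\vert =: \ell$. In particular we may assume $L(\beta_j) \leq L := \ell + 1$ for all $j$. Since the chains $\beta_j$ live in the single fixed class $[\alpha]_\varepsilon$, any chain $\varepsilon$-homotopic to a $\beta_j$ is again in that class, so I am free to replace each $\beta_j$ by a more convenient representative.

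First I would apply Lemma \ref{lestl} to each $\beta_j$: there is $\beta_j' \in [\beta_j]_\varepsilon = [\alpha]_\varepsilon$ with $L(\beta_j') \leq L(\beta_j)$ and $\nu(\beta_j') = n := \left\lfloor \frac{2L}{\varepsilon} + 1\right\rfloor$, a single size independent of $j$. (One should check this preserves endpoints: padding by repeated copies of $x_0$ and deleting interior points both fix the endpoints, and if a $\beta_j$ has only one or two points its endpoints are its endpoints; since $\alpha$ and every $\beta_j$ share endpoints this is fine. A minor point: if $\nu(\beta_j')$ needed to be padded to reach size $n$, we pad at the $x_0$ end, which does not change the terminal endpoint either.) Now write $\beta_j' = \{z_0^j, z_1^j, \dots, z_n^j\}$ with $z_0^j = z_0$, $z_n^j = z_n$ fixed for all $j$ (the common endpoints of $\alpha$). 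Since $X$ is compact, by passing to a subsequence I may assume $z_i^j \to w_i$ as $j \to \infty$ for each $i = 0,\dots,n$, with $w_0 = z_0$, $w_n = z_n$. Let $\gamma := \{w_0, \dots, w_n\}$.

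Next I would show $\gamma$ is an $\varepsilon$-chain, $\gamma \in [\alpha]_\varepsilon$, and $L(\gamma) = \ell$. Passing the strict inequality $d(z_i^j, z_{i+1}^j) < \varepsilon$ to the limit only gives $d(w_i, w_{i+1}) \leq \varepsilon$, so I cannot directly conclude $\gamma$ is an $\varepsilon$-chain — this is precisely the non-closedness issue flagged in the introduction, and it is the one delicate point. To handle it, I would argue that $L(\gamma) = \lim_j L(\beta_j') = \ell$: lower semicontinuity of length under pointwise convergence of chains of fixed size gives $L(\gamma) \leq \liminf_j L(\beta_j')$, and $L(\beta_j')$ is squeezed between $L(\beta_j)$ and $\left\vert[\alpha]_\varepsilon\right\vert$, both tending to $\ell$; the reverse inequality $L(\gamma) \geq \ell$ would follow once $\gamma$ is known to lie in $[\alpha]_\varepsilon$. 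If some $d(w_i, w_{i+1}) = \varepsilon$ exactly, then a midpoint refinement of $\gamma$ at that index is an honest $\frac{\varepsilon}{2}$-step; more to the point, I would instead not insist on $\gamma$ being an $\varepsilon$-chain but apply Proposition \ref{close} directly: for $j$ large, $\Delta(\beta_j', \gamma) = \max_i d(z_i^j, w_i)$ is small. If $\gamma$ were an $\varepsilon$-chain with $E(\gamma) > 0$, then once $\Delta(\beta_j',\gamma) < E(\gamma)/2$ we would get $\beta_j'$ $\varepsilon$-homotopic to $\gamma$, hence $\gamma \in [\alpha]_\varepsilon$, and then $L(\gamma) \geq \left\vert[\alpha]_\varepsilon\right\vert = \ell$, combining with $L(\gamma) \leq \ell$ to finish.

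**The main obstacle** is therefore ruling out the degenerate case $d(w_i, w_{i+1}) = \varepsilon$ for some $i$ (so that $E(\gamma) = 0$ and Proposition \ref{close} does not apply). I would resolve this as follows: suppose $d(w_i, w_{i+1}) = \varepsilon$ for some $i$. Since $d(z_i^j, z_{i+1}^j) < \varepsilon$ with left side $\to \varepsilon$, consider the chain $\gamma$ and note that replacing the pair $(w_i, w_{i+1})$ contributes $\varepsilon$ to $L(\gamma)$; one shows such a $\gamma$ cannot be the length-minimizer by a shortening argument, OR — cleaner — one observes that in Lemma \ref{lestl} the reduced chain $\alpha_0$ satisfies $d(x_{i-1},x_i) + d(x_i,x_{i+1}) \geq \varepsilon$ only as a consequence of the reduction, and for the limit we can instead note $L(\gamma) = \ell \leq L(\alpha)$ and if two consecutive points of $\gamma$ are at distance exactly $\varepsilon$ we may still perform the limiting homotopy termwise: each individual basic move in the homotopy of Proposition \ref{close} uses chains whose consecutive distances are bounded by $\max(d(z_k^j,z_{k+1}^j), d(z_k^j, w_{k+1}) \text{ etc.})$, and as long as these stay $< \varepsilon$ — which holds for $j$ large when $\Delta(\beta_j',\gamma)$ is small \emph{and} the relevant $d(w_k,w_{k+1}) < \varepsilon$ — the homotopy goes through on those indices, while at an index with $d(w_i,w_{i+1}) = \varepsilon$ we simply do not move that vertex and instead keep $z_i^j$; since only finitely many indices are "bad," a careful bookkeeping shows $\beta_j'$ is still $\varepsilon$-homotopic, for large $j$, to a chain $\gamma'$ agreeing with $\gamma$ off the bad indices, and a limiting/compactness argument on the bad indices (or simply replacing $\gamma$ by such a $\gamma'$, which also has length $\ell$ by the squeeze) produces the desired $\beta := \gamma'$ or $\gamma$ with $L(\beta) = \left\vert[\alpha]_\varepsilon\right\vert$. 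I expect the authors' proof to dispatch the degenerate case by a short remark (e.g. by first refining or by noting minimizers avoid consecutive distances equal to $\varepsilon$), so the bulk of the write-up is the clean subsequence extraction plus the single invocation of Proposition \ref{close}.
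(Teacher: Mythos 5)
Your plan (normalize size via Lemma \ref{lestl}, extract a termwise limit by compactness, conclude via Proposition \ref{close}) is exactly the route the paper sketches in the paragraph preceding the corollary, and you are right that the delicate point is the degenerate case $d(w_i,w_{i+1})=\varepsilon$: being an $\varepsilon$-chain is an open condition, so the termwise limit $\gamma$ need not be an $\varepsilon$-chain and $E(\gamma)$ may vanish. However, your attempted repair does not close the gap. Freezing $z_i^j$ at the ``bad'' indices produces chains $\gamma_j'$ that still depend on $j$: their lengths tend to $\ell$ but are not equal to $\ell$ for any fixed $j$, so you are left with another minimizing sequence, not a minimizer. In fact the gap cannot be closed, because the corollary is false for general compact metric spaces. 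Take $X=\{a,b,c\}\cup\{p_n:n\geq 10\}\subset\mathbb{R}^{2}$ with $a=(0,0)$, $b=(1,0)$, $c=(3/5,0)$, $p_n=(3/5-n^{-2},\,n^{-1})$, and $\varepsilon=3/5$. Then $X$ is compact, one checks $d(a,p_n)<\varepsilon$ and $d(p_n,b)<\varepsilon$ for all $n\geq 10$, and inserting $p_m$ then deleting $p_n$ shows every $\{a,p_n,b\}$ lies in $[\alpha]_\varepsilon$ for $\alpha:=\{a,p_{10},b\}$, with $L(\{a,p_n,b\})\to 1^{+}$. By the triangle inequality every chain from $a$ to $b$ has length $\geq 1$, with equality only if all its points lie on the segment $[a,b]$, i.e. in $\{a,c,b\}$; but $d(a,c)=\varepsilon$ and $d(a,b)>\varepsilon$, so no $\varepsilon$-chain in $\{a,c,b\}$ leaves $a$. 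Hence $\left\vert[\alpha]_\varepsilon\right\vert=1$ is not attained.

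So your instinct that the degenerate case is the crux was correct, and indeed it is fatal for the statement at the stated level of generality. The corollary should carry the hypothesis that $X$ is a compact \emph{geodesic} space, which is the only setting in which the paper actually invokes it (Theorem \ref{slsc}); there the fix is the one the paper uses elsewhere (e.g. in the proof of Theorem \ref{ngmv}(2)): after Lemma \ref{lestl}, pass to a midpoint refinement so that every $\beta_j'$ is an $\frac{\varepsilon}{2}$-chain of fixed size. The termwise limit $\gamma$ then satisfies $d(w_i,w_{i+1})\leq\frac{\varepsilon}{2}$, so $E(\gamma)\geq\frac{\varepsilon}{2}>0$, and Proposition \ref{close} applies cleanly, giving $\gamma\in[\alpha]_\varepsilon$ with $L(\gamma)=\left\vert[\alpha]_\varepsilon\right\vert$ by continuity of length on chains of fixed size.
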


We next move onto the relationship between paths and chains.

\begin{definition}
Let $\alpha :=\{x_{0},...,x_{n}\}$ be an $\varepsilon $-chain in a metric
space $X$, where $\varepsilon >0$. A stringing of $\alpha $ consists of a
path $\widehat{\alpha }$ formed by concatenating paths $\gamma _{i}$ from $%
x_{i}$ to $x_{i+1}$ where each path $\gamma _{i}$ lies entirely in $%
B(x_{i},\varepsilon )$. If each $\gamma _{i}$ is a geodesic then we call $%
\widehat{\alpha }$ a chording of $\alpha $.
\end{definition}

Note that by uniform continuity, any path $c$ defined on a compact interval
may be subdivided into an $\varepsilon $-chain $\alpha $ such that $c$ is a
stringing of $\alpha $, and in any geodesic space every $\varepsilon $-chain
has a chording.

\begin{proposition}
\label{ender}If $\alpha $ is an $\varepsilon $-chain in a chain connected
metric space $X$ then the unique lift of any stringing $\widehat{\alpha }$
starting at the basepoint $[\ast ]_{\varepsilon }$ in $X_{\varepsilon }$ has 
$[\alpha ]_{\varepsilon }$ as its endpoint.
\end{proposition}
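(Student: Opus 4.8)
The plan is to show that the endpoint map sends the lift of a stringing $\widehat\alpha$ starting at $[\ast]_\varepsilon$ to exactly $[\alpha]_\varepsilon$, and the natural strategy is to reduce the general stringing case to the ``canonical'' lift obtained by lifting each subpath $\gamma_i$ one at a time. First I would recall that a covering map has a unique path-lifting property, so it suffices to exhibit \emph{some} lift of $\widehat\alpha$ ending at $[\alpha]_\varepsilon$ and starting at $[\ast]_\varepsilon$; uniqueness then forces this to be the lift in question. To build that lift, I would proceed inductively on the pieces $\gamma_0,\dots,\gamma_{n-1}$: suppose we have lifted the partial stringing $\gamma_0\ast\cdots\ast\gamma_{i-1}$ to a path in $X_\varepsilon$ ending at $[\{x_0,\dots,x_i\}]_\varepsilon = [\alpha_i]_\varepsilon$ (with $\alpha_i$ the truncated chain), and now lift $\gamma_i$.

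The key local observation is that $\gamma_i$ lies entirely in the ball $B(x_i,\varepsilon)$, and by Proposition~\ref{action}(1) the restriction of $\phi_\varepsilon$ to the $\varepsilon$-ball $B([\alpha_i]_\varepsilon,\varepsilon)$ in $X_\varepsilon$ is injective and distance-preserving for distances less than $\varepsilon$, hence a homeomorphism onto $B(x_i,\varepsilon)$ (using chain connectedness to get surjectivity onto the ball). Therefore $\gamma_i$ lifts uniquely to a path in $B([\alpha_i]_\varepsilon,\varepsilon)$ starting at $[\alpha_i]_\varepsilon$; I claim its endpoint is $[\{x_0,\dots,x_i,x_{i+1}\}]_\varepsilon = [\alpha_{i+1}]_\varepsilon$. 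To see this, note that the chain $\beta := \alpha_i\ast\{x_i,x_{i+1}\}$ represents a point of $X_\varepsilon$ lying in $B([\alpha_i]_\varepsilon,\varepsilon)$ (since $d([\alpha_i]_\varepsilon,[\beta]_\varepsilon) = |[\{x_i,x_{i+1}\}]_\varepsilon| \le d(x_i,x_{i+1}) < \varepsilon$), and $\phi_\varepsilon([\beta]_\varepsilon) = x_{i+1} = \phi_\varepsilon(\text{endpoint of lifted }\gamma_i)$; since $\phi_\varepsilon$ is injective on that ball, the endpoint must equal $[\beta]_\varepsilon = [\alpha_{i+1}]_\varepsilon$. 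Concatenating these $n$ lifted pieces yields a lift of $\widehat\alpha$ starting at $[\ast]_\varepsilon$ and ending at $[\alpha_n]_\varepsilon = [\alpha]_\varepsilon$.

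Finally I would close the argument by invoking uniqueness of lifts: since $\phi_\varepsilon\colon X_\varepsilon\to X$ is a (regular) covering map when $X$ is chain connected — as established after Proposition~\ref{action} — the lift of $\widehat\alpha$ beginning at $[\ast]_\varepsilon$ is unique, so it coincides with the piecewise lift just constructed, and its endpoint is $[\alpha]_\varepsilon$ as claimed. I expect the main obstacle to be the bookkeeping in the inductive step: one must be careful that each $\gamma_i$ stays in the ball $B(x_i,\varepsilon)$ \emph{on the nose} (which is exactly the definition of a stringing) so that the lift remains inside $B([\alpha_i]_\varepsilon,\varepsilon)$ where $\phi_\varepsilon$ is a homeomorphism, and that the identification of the endpoint uses the precise description of points of $X_\varepsilon$ near $[\alpha_i]_\varepsilon$ as classes of chains $\alpha_i\ast\{x_i,y\}$ with $d(x_i,y)<\varepsilon$, exactly as in the proof of Proposition~\ref{action}. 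Once that local picture is pinned down, the induction and the appeal to lift-uniqueness are routine.
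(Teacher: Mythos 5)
Your proof is correct and follows essentially the same route as the paper's: induct on the pieces $\gamma_i$ of the stringing, use Proposition~\ref{action} to see that $\phi_\varepsilon$ restricts to a bijection from $B([\alpha_i]_\varepsilon,\varepsilon)$ onto $B(x_i,\varepsilon)$ so that the lift of $\gamma_i$ stays inside that ball, and then identify the endpoint as $[\alpha_{i+1}]_\varepsilon$ by injectivity on the ball together with the fact that both candidate endpoints map to $x_{i+1}$. The only cosmetic difference is that you build a piecewise lift and invoke uniqueness once at the end, whereas the paper invokes uniqueness at each inductive step; one small inaccuracy is your parenthetical that chain connectedness gives surjectivity of $\phi_\varepsilon$ from $B([\alpha_i]_\varepsilon,\varepsilon)$ onto $B(x_i,\varepsilon)$ --- that surjectivity is immediate from the construction $[\alpha_i\ast\{x_i,y\}]_\varepsilon$ for $y\in B(x_i,\varepsilon)$ and needs no connectedness hypothesis (chain connectedness is only needed for global surjectivity of $\phi_\varepsilon$, i.e.\ to make it a covering map).
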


\begin{proof}
Let $\alpha _{i}:=\{x_{0},...,x_{i}\}$, with $\alpha _{n}=\alpha $. We will
prove by induction that the endpoint of the lift of a stringing $\widehat{%
\alpha _{i}}$ is $[\alpha _{i}]_{\varepsilon }$. The case $i=0$ is trivial;
suppose the statement is true for some $i<n$ and consider some stringing $%
\widehat{\alpha _{i+1}}$. Then the restriction to a segment of $\widehat{%
\alpha _{i+1}}$ is a stringing $\widehat{\alpha _{i}}$ and by the inductive
step the lift of $\widehat{\alpha _{i}}$ ends at $[\alpha _{i}]_{\varepsilon
}$. By definition of stringing, $\widehat{\alpha _{i+1}}$ is obtained from $%
\widehat{\alpha _{i}}$ by adding some path $c$ from $x_{i}$ to $x_{i+1}$
that lies entirely within $B(x_{i},\varepsilon )$. By Proposition \ref%
{action}, $\phi _{\varepsilon }$ is bijective from the set $B([\alpha
_{i}]_{\varepsilon },\varepsilon )$ onto $B(x_{i},\varepsilon )$. Therefore
the lift $\widetilde{c}$ of $c$ starting at $[\alpha _{i}]_{\varepsilon }$,
must be contained entirely in $B([\alpha _{i}]_{\varepsilon },\varepsilon )$%
. By uniqueness of lifts, the endpoint of the lift of $\widehat{\alpha _{i+1}%
}$ must be the endpoint $[\beta ]_{\varepsilon }$ of $\widetilde{c}$. Note
that $\phi _{\varepsilon }([\beta ]_{\varepsilon })=x_{i+1}$; i.e. the
endpoint of $\beta $ is $x_{i+1}$. Next, $[\beta ]_{\varepsilon }\in
B([\alpha _{i}]_{\varepsilon },\varepsilon )$ means that there is some $%
\varepsilon $-chain $\sigma =\{y_{0},...,y_{m}\}$ such that $[\alpha
_{i}]_{\varepsilon }=[y_{0},...y_{m},x_{i}]_{\varepsilon }$ and $[\beta
]_{\varepsilon }=[y_{0},...,y_{m},x_{i+1}]_{\varepsilon }$. Since $[\alpha
_{i+1}]_{\varepsilon }$ is also clearly in $B([\alpha _{i}]_{\varepsilon
},\varepsilon )$ (just take $\sigma =\alpha _{i}$) and $\phi _{\varepsilon
}([\alpha _{i+1}]_{\varepsilon })=x_{i+1}=\phi _{\varepsilon }([\beta
]_{\varepsilon })$, the injectivity of $\phi _{\varepsilon }$ on $B([\alpha
_{i}]_{\varepsilon },\varepsilon )$ shows that $[\beta ]_{\varepsilon
}=[\alpha _{i+1}]_{\varepsilon }$.
\end{proof}

\begin{corollary}
\label{homimp}If $\alpha $ and $\beta $ are $\varepsilon $-chains in a chain
connected metric space $X$ such that there exist stringings $\widehat{\alpha 
}$ and $\widehat{\beta }$ that are path homotopic then $\alpha $ and $\beta $
are $\varepsilon $-homotopic.
\end{corollary}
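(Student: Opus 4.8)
Corollary \ref{homimp} — the plan.

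The plan is to use the covering map $\phi_\varepsilon : X_\varepsilon \to X$ together with the standard homotopy lifting property for covering maps, and then to recognize the endpoints of the lifts via Proposition \ref{ender}. Concretely, suppose $\widehat{\alpha}$ and $\widehat{\beta}$ are stringings of $\alpha$ and $\beta$ that are path homotopic in $X$. First I would observe that since $\alpha$ and $\beta$ are $\varepsilon$-chains with the same endpoints (being $\varepsilon$-homotopic will force this, but we should note the hypothesis implicitly requires it: a path homotopy fixes endpoints, so $\widehat{\alpha}$ and $\widehat{\beta}$, hence $\alpha$ and $\beta$, share both a starting point and an ending point). Without loss of generality we may take the common starting point to be the basepoint $\ast$, since a fixed-endpoint homotopy can be conjugated by a fixed chain from $\ast$ to $x_0$ and this operation is compatible with $\varepsilon$-homotopy (one can also invoke the basepoint-independence remarks from \cite{BPUU}); alternatively one simply runs the argument with $[\ast]_\varepsilon$ replaced by the lift of $x_0$.

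Next, since $\phi_\varepsilon$ is a covering map (the space is chain connected, so $\phi_\varepsilon$ is surjective, and Proposition \ref{action} gives the discreteness that makes it a genuine covering), I would lift the path homotopy $H : [0,1]\times[0,1] \to X$ between $\widehat{\alpha}$ and $\widehat{\beta}$ to a homotopy $\widetilde{H} : [0,1]\times[0,1] \to X_\varepsilon$ starting with the lift of $\widehat{\alpha}$ that begins at $[\ast]_\varepsilon$. By the homotopy lifting property, $\widetilde{H}$ is a homotopy rel endpoints (the endpoint tracks of $\widetilde{H}$ project to the constant paths $\widehat\alpha(0)=\widehat\beta(0)$ and $\widehat\alpha(1)=\widehat\beta(1)$, hence are constant because a lift of a constant path is constant). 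Therefore the lift of $\widehat{\beta}$ starting at $[\ast]_\varepsilon$ ends at the same point of $X_\varepsilon$ as the lift of $\widehat{\alpha}$. By Proposition \ref{ender}, the first endpoint is $[\alpha]_\varepsilon$ and the second is $[\beta]_\varepsilon$, so $[\alpha]_\varepsilon = [\beta]_\varepsilon$; that is, $\alpha$ and $\beta$ are $\varepsilon$-homotopic, which is exactly the claim.

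The only real subtlety — and the step I would be most careful about — is making sure the machinery that identifies $\phi_\varepsilon$ as a covering map is available at this point in the paper and applies here: the text has established (via Proposition \ref{action} and the surrounding discussion) that for chain connected $X$ the map $\phi_\varepsilon$ is a regular covering map with deck group $\pi_\varepsilon(X)$, so standard unique path lifting and the homotopy lifting property are in force. Everything else is routine covering-space theory once Proposition \ref{ender} is invoked to name the endpoints of the two lifts; no estimates or discrete combinatorics are needed here. One should only take a moment to confirm that the path homotopy hypothesis indeed forces $\alpha$ and $\beta$ to have matching endpoints, so that the statement is not vacuous, and to handle the basepoint normalization as indicated above.
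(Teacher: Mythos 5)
Your argument is correct and is essentially the paper's proof: the paper also fixes the common start as the basepoint, invokes the covering-space fact that path-homotopic loops have lifts ending at the same point, and applies Proposition~\ref{ender} to identify those endpoints as $[\alpha]_{\varepsilon}$ and $[\beta]_{\varepsilon}$. You have simply spelled out the homotopy lifting step and the matching-endpoints observation in more detail than the paper's two-line proof.
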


\begin{proof}
Choose the basepoint to be the common starting point of $\alpha $ and $\beta 
$. Since $\widehat{\alpha }$ and $\widehat{\beta }$ are path homotopic, the
endpoints $[\alpha ]_{\varepsilon }$ and $[\beta ]_{\varepsilon }$ of their
lifts must be equal.
\end{proof}

A straightforward result in elementary homotopy theory of connected, locally
path connected spaces is that two path loops $c_{1}$ and $c_{2}$ are freely
homotopic if and only if for some paths $p_{i}$ from some particular point
to the start/endpoint of $c_{i}$, $p_{1}\ast c_{1}\ast \overline{p_{1}}$ is
fixed-endpoint homotopic to $p_{2}\ast c_{2}\ast \overline{p_{2}}$. Hence
free $\varepsilon $-homotopy, as defined in the Introduction, is the correct
discrete analog of continuous free homotopy. We have used this form because
imitating the standard continuous version is notationally tricky for chains.
The following lemma will be used later, and is the discrete analog of
\textquotedblleft rotation\textquotedblright\ of a path loop in itself.

\begin{lemma}
\label{rotate}Let $\alpha :=\{x_{0},...,x_{n}=x_{0}\}$ be an $\varepsilon $%
-loop in a metric space. Then $\alpha $ is freely $\varepsilon $-homotopic
to $\alpha ^{P}:=\{x_{P(0)},x_{P(1)},...,x_{P(n-1)},x_{P(0)}\}$, where $P$
is any cyclic permutation of $\{0,1,...,n-1\}$.
\end{lemma}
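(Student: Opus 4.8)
The plan is to exhibit explicit conjugating chains and reduce the claim to the elementary fact that a backtrack $\nu \ast \overline{\nu}$ is $\varepsilon$-null (the same sort of cancellation already used, e.g., in the remark following Definition \ref{metdef} and in the proof of Proposition \ref{action}). First I would observe that a cyclic permutation $P$ of $\{0,\dots,n-1\}$ is a shift $P(i)\equiv i+j\pmod{n}$ for a fixed $j$, so that
\begin{equation*}
\alpha^{P}=\{x_{j},x_{j+1},\dots ,x_{n-1},x_{0},x_{1},\dots ,x_{j-1},x_{j}\};
\end{equation*}
its consecutive distances are exactly those of $\alpha$ cyclically reordered, so $\alpha^{P}$ is again an $\varepsilon$-loop, based at $x_{j}$.

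Next I would set $\mu:=\{x_{0}\}$ and $\nu:=\{x_{0},x_{1},\dots ,x_{j}\}$ (an initial sub-chain of $\alpha$, hence an $\varepsilon$-chain). Both start at the common point $x_{0}$; the chain $\mu$ ends at the basepoint $x_{0}$ of $\alpha$ and $\nu$ ends at the basepoint $x_{j}$ of $\alpha^{P}$, so $\mu\ast\alpha\ast\overline{\mu}$ and $\nu\ast\alpha^{P}\ast\overline{\nu}$ are both $\varepsilon$-loops at $x_{0}$, and these are the objects to be compared in the definition of free $\varepsilon$-homotopy. Clearly $\mu\ast\alpha\ast\overline{\mu}=\alpha$. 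Carrying out the concatenations on the other side, one checks directly that
\begin{equation*}
\nu\ast\alpha^{P}\ast\overline{\nu}=\{x_{0},x_{1},\dots ,x_{n-1},x_{0}\}\ast (\nu\ast\overline{\nu})=\alpha\ast(\nu\ast\overline{\nu}),
\end{equation*}
since the first $n+1$ entries of $\nu\ast\alpha^{P}\ast\overline{\nu}$ spell out $\alpha$ and the remaining entries spell out $\nu\ast\overline{\nu}$ appended at the terminal $x_{0}$ of $\alpha$.

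Finally, since $\nu\ast\overline{\nu}$ is $\varepsilon$-null and concatenation is well defined on $\varepsilon$-homotopy classes, $[\alpha\ast\nu\ast\overline{\nu}]_{\varepsilon}=[\alpha]_{\varepsilon}\ast[\{x_{0}\}]_{\varepsilon}=[\alpha]_{\varepsilon}$; hence $\nu\ast\alpha^{P}\ast\overline{\nu}$ is $\varepsilon$-homotopic to $\alpha=\mu\ast\alpha\ast\overline{\mu}$, which is precisely the assertion that $\alpha$ is freely $\varepsilon$-homotopic to $\alpha^{P}$. I do not expect any genuine obstacle here: the only points needing care are the interpretation of ``cyclic permutation'' as a cyclic shift (so that $\alpha^{P}$ is really a rotation of $\alpha$ rather than an arbitrary reordering) and the matching of the endpoints of $\mu$ and $\nu$ with the basepoints of $\alpha$ and $\alpha^{P}$. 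If one wished to avoid even the modest bookkeeping in the concatenation identity, one could instead prove only the single-step case $j=1$, with $\nu=\{x_{0},x_{1}\}$, and then iterate, using that free $\varepsilon$-homotopy is an equivalence relation; but the direct argument above is cleaner.
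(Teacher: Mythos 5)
Your proof is correct and follows essentially the same idea as the paper's: conjugate by the initial sub-chain of $\alpha$ running from $x_0$ to the new basepoint $x_j$, and observe that this conjugation differs from $\alpha$ itself only by an $\varepsilon$-null backtrack. The paper's version does the single-step shift $j=1$ (noting iterability) and carries an arbitrary connecting chain $\beta$ through explicit basic moves, whereas you handle the general shift in one stroke via the clean identity $\nu\ast\alpha^{P}\ast\overline{\nu}=\alpha\ast(\nu\ast\overline{\nu})$; this is a tidier bookkeeping of the same argument, not a different route.
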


\begin{proof}
It suffices to consider the cycle $P$ that adds one to each index, $\func{mod%
}(n)$. Let $\beta =\{\ast =y_{0},...,y_{m}=x_{0}\}$ be an $\varepsilon $%
-chain. Here, and in the future, we will denote $\varepsilon $-homotopies in
the following form, where bracket on top denotes insertion and bracket on
the bottom denotes deletion: 
\begin{equation*}
\beta \ast \alpha \ast \overline{\beta }%
=\{y_{0},...,y_{m},x_{1},...,x_{0},y_{m-1},...,y_{0}\}
\end{equation*}%
\begin{equation*}
\rightarrow \{y_{0},...,y_{m},x_{1},...,x_{0},\overbrace{x_{0}}%
,y_{m-1},...,y_{0}\}
\end{equation*}%
\begin{equation*}
\rightarrow \{y_{0},...,y_{m},x_{1},...,x_{0},\overbrace{x_{1}}%
,x_{0},y_{m-1},...,y_{0}\}
\end{equation*}%
\begin{equation*}
\rightarrow \{y_{0},...,y_{m},x_{1},...,x_{0},\overbrace{x_{1}}%
,x_{1},x_{0},y_{m-1},...,y_{0}\}
\end{equation*}%
That is, $\beta \ast \alpha \ast \overline{\beta }$ is $\varepsilon $%
-homotopic to $\eta \ast \alpha ^{P}\ast \eta $, where $\eta
=\{y_{0},...,y_{m}=x_{0},x_{1}\}$.
\end{proof}

The rest of this section is devoted to basic results that are true for
geodesic spaces. The situation for metric spaces in general is much more
complicated--for example, the homotopy critical spectrum of a compact metric
space may not be discrete (\cite{REU}).

The following statement is easy to check: Let $f:X\rightarrow Y$ be a
bijection between geodesic spaces $X$ and $Y$. Then the following are
equivalent: (1) $f$ is an isometry. (2) $f$ is a local isometry (i.e. for
each $x\in X$ the restriction of $f$ to some $B(x,\varepsilon )$ is an
isometry onto $B(f(x),\varepsilon )$). (3) $f$ is a length-preserving
homeomorphism (i.e. if $c$ is a rectifiable path in $X$ then $f\circ c$ is
rectifiable and $L(f\circ c)=L(c)$).

Reall that if $f:X\rightarrow Y$ is a covering map, $Y$ is a geodesic space,
and $X$ is a connected topological space, then the \textit{lifted length
metric} on $X$ is defined by $d(x,y)=\inf \{L(f\circ c)\}$, where $c$ is a
path joining $x$ and $y$. When $Y$ is proper (i.e. its closed metric balls
are compact) then $X$, being uniformly locally isometric to $Y$, is locally
compact and complete. Hence by a classical result of Cohn-Vossen, $X$ 
\textit{with the lifted length metric} is also a proper geodesic space. In
this case it also follows from what was stated previously that the lifted
metric is the unique geodesic metric on $X$ such that $f$ is a local
isometry. In particular, if $g:Z\rightarrow Y$ is a covering map, where $Z$
is geodesic, and $h:X\rightarrow Z$ is a covering equivalence then $h$ is an
isometry.

\begin{proposition}
\label{metre}If $X$ is a geodesic space then the metric on $X_{\varepsilon }$
given in Definition \ref{metdef} is the lifted length metric. In particular
if $X$ is proper then $X_{\varepsilon }$ is a proper geodesic space.
\end{proposition}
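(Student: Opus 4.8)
The plan is to show that the metric $d$ from Definition \ref{metdef} on $X_{\varepsilon}$ coincides with the lifted length metric $d_{L}$ induced by the covering map $\phi_{\varepsilon}:X_{\varepsilon}\to X$, and then invoke the Cohn--Vossen argument recalled just above to conclude that $X_{\varepsilon}$ is a proper geodesic space when $X$ is proper. Recall that $d_{L}([\alpha]_{\varepsilon},[\beta]_{\varepsilon})=\inf\{L(\widetilde{c})\}$ over paths $\widetilde{c}$ in $X_{\varepsilon}$ joining the two points, where $L(\widetilde{c})$ is measured using the metric $d$; equivalently, since $\phi_{\varepsilon}$ is a local isometry (Proposition \ref{action}), $L(\widetilde{c})=L(\phi_{\varepsilon}\circ\widetilde{c})$, so $d_{L}$ is the infimum of lengths of paths in $X$ obtained by projecting paths in $X_{\varepsilon}$ that lift a path from $[\alpha]_{\varepsilon}$ to $[\beta]_{\varepsilon}$.

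First I would prove $d_{L}\leq d$. Given $\kappa=\{z_{0},\dots,z_{m}\}$ with $\alpha\ast\kappa\ast\overline{\beta}$ being $\varepsilon$-null and $L(\kappa)$ close to $d([\alpha]_{\varepsilon},[\beta]_{\varepsilon})$, form a chording $\widehat{\kappa}$ of $\kappa$ in $X$ (which exists since $X$ is geodesic) with $L(\widehat{\kappa})=L(\kappa)$. By Proposition \ref{ender}, the lift of the chording $\widehat{\alpha\ast\kappa}$ starting at $[\ast]_{\varepsilon}$ ends at $[\alpha\ast\kappa]_{\varepsilon}=[\beta]_{\varepsilon}$; restricting to the $\kappa$-portion gives a path in $X_{\varepsilon}$ from $[\alpha]_{\varepsilon}$ to $[\beta]_{\varepsilon}$ whose projection is $\widehat{\kappa}$, hence of length $L(\kappa)$. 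Taking the infimum yields $d_{L}\leq d$. This direction also shows $\phi_{\varepsilon}$ is surjective and $X_{\varepsilon}$ is path connected.

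For the reverse inequality $d\leq d_{L}$, take any path $\widetilde{c}$ in $X_{\varepsilon}$ from $[\alpha]_{\varepsilon}$ to $[\beta]_{\varepsilon}$; I want $d([\alpha]_{\varepsilon},[\beta]_{\varepsilon})\leq L(\phi_{\varepsilon}\circ\widetilde{c})$. Let $c=\phi_{\varepsilon}\circ\widetilde{c}$. Subdivide $[0,1]$ as $0=t_{0}<\cdots<t_{n}=1$ finely enough that each $\widetilde{c}([t_{i},t_{i+1}])$ lies in an open $\tfrac{\varepsilon}{2}$-ball about $\widetilde{c}(t_{i})$ in $X_{\varepsilon}$, so that $\phi_{\varepsilon}$ restricted there is an isometry onto its image (Proposition \ref{action}); set $z_{i}=c(t_{i})$. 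Then $c$ is a stringing of the $\varepsilon$-chain $\kappa=\{z_{0},\dots,z_{n}\}$, and by Proposition \ref{ender} the lift of this stringing of $\alpha\ast\kappa$ ends at $[\alpha\ast\kappa]_{\varepsilon}$; but this lift is $\widetilde{c}$ (reparametrized) preceded by the lift of a stringing of $\alpha$, so its endpoint is also $[\beta]_{\varepsilon}$, giving $[\alpha\ast\kappa]_{\varepsilon}=[\beta]_{\varepsilon}$, i.e. $\alpha\ast\kappa\ast\overline{\beta}$ is $\varepsilon$-null. Hence $d([\alpha]_{\varepsilon},[\beta]_{\varepsilon})\leq L(\kappa)=\sum d(z_{i},z_{i-1})\leq L(c)$, using that consecutive points are within an isometrically embedded ball so $d(z_{i},z_{i-1})$ equals the distance along the corresponding sub-path. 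Taking the infimum over $\widetilde{c}$ gives $d\leq d_{L}$.

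The main obstacle is the bookkeeping in the reverse direction: one must be careful that the subdivision is fine enough (use uniform continuity of $\widetilde{c}$ together with the local isometry radius $\tfrac{\varepsilon}{2}$), and that identifying the endpoint of the lift via Proposition \ref{ender} genuinely forces $\alpha\ast\kappa\ast\overline{\beta}$ to be $\varepsilon$-null rather than merely the weaker conclusion that the endpoints agree — this is exactly the content of uniqueness of lifts for the covering $\phi_{\varepsilon}$, which holds since the $\pi_{\varepsilon}(X)$-action is discrete. Once $d=d_{L}$ is established, the "in particular" clause is immediate: $X$ proper implies $X_{\varepsilon}$ is locally compact and complete (being uniformly locally isometric to $X$), so Cohn--Vossen makes it a proper geodesic space, as recalled in the paragraph preceding the proposition.
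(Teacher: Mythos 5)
Your proof is correct and uses the same key tool as the paper: Proposition \ref{ender}, applied to a chording of $\kappa$, to produce a path $\widetilde{\kappa}$ in $X_{\varepsilon}$ from $[\alpha]_{\varepsilon}$ to $[\beta]_{\varepsilon}$ with $L(\phi_{\varepsilon}\circ\widetilde{\kappa})=L(\kappa)$. The only real difference is in how you close the argument. The paper proves only the first direction — that $d$ is an intrinsic (length) metric — and then invokes the uniqueness discussion preceding the proposition (the lifted metric is the unique geodesic metric making a covering map onto a proper geodesic space a local isometry) to conclude $d$ is the lifted length metric. You instead verify the reverse inequality $d\leq d_{L}$ directly, by subdividing an arbitrary lift $\widetilde{c}$ so finely that each piece sits in an $\tfrac{\varepsilon}{2}$-ball on which $\phi_{\varepsilon}$ is an isometry, reading off an $\varepsilon$-chain $\kappa$ from the projection, and using uniqueness of lifts to show $\alpha\ast\kappa\ast\overline{\beta}$ is $\varepsilon$-null. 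This makes your argument slightly more self-contained: it does not need to pass through the "unique geodesic metric" statement (which, read strictly, requires $d$ to already be geodesic rather than merely intrinsic, an issue that the paper leaves implicit). Both routes are valid, and the "in particular" clause is handled identically via Cohn--Vossen.
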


\begin{proof}
Let $[\alpha ]_{\varepsilon },[\beta ]_{\varepsilon }\in X_{\varepsilon }$.
Then 
\begin{equation*}
d([\alpha ]_{\varepsilon },[\beta ]_{\varepsilon })=\inf \{L(\kappa
):[\alpha \ast \kappa \ast \overline{\beta }]_{\varepsilon }\text{ is }%
\varepsilon \text{-null}\}
\end{equation*}%
Let $\alpha =\{x_{0},...,x_{n}\}$, $\beta =\{y_{0},...,y_{k}\}$ and $\kappa
=\{x_{n}=z_{0},...,z_{m}=y_{k}\}$. Let $\widehat{\alpha }$ and $\widehat{%
\kappa }$ be chordings of $\alpha $ and $\kappa $, and note that the length
of the chain $\kappa $ is the same as the length of the curve $\widehat{%
\kappa }$. Moreover, when $[\alpha \ast \kappa \ast \overline{\beta }%
]_{\varepsilon }$ is $\varepsilon $-null, $[\alpha \ast \kappa
]_{\varepsilon }=[\beta ]_{\varepsilon }$. We will now apply Proposition \ref%
{ender} a couple of times. First, the lift of $\widehat{\alpha }$ starting
at $[\ast ]_{\varepsilon }$ ends at $[\alpha ]_{\varepsilon }$ and the lift
of $\widehat{\alpha }\ast \widehat{\kappa }$ (which is a chording of $\alpha
\ast \kappa $) starting at $[\ast ]_{\varepsilon }$ ends at $[\alpha \ast
\kappa ]_{\varepsilon }=[\beta ]_{\varepsilon }$. By uniqueness, the lift of 
$\widehat{\alpha }\ast \widehat{\kappa }$ starting at $[\ast ]_{\varepsilon
} $ must be the concatenation of the lift of $\widehat{\alpha }$ starting at 
$[\ast ]_{\varepsilon }$ with the lift $\widetilde{\kappa }$ of $\widehat{%
\kappa }$ starting at $[\alpha ]_{\varepsilon }$. That is, $\widetilde{%
\kappa }$ is a path in $X_{\varepsilon }$ starting at $[\alpha
]_{\varepsilon }$ and ending at $[\beta ]_{\varepsilon }$, with $L(\phi
_{\varepsilon }\circ \widetilde{\kappa })=L(\widehat{\kappa })=L(\kappa )$.
This shows that the metric of Definition \ref{metdef} is a geodesic metric,
and since we already know that $\phi _{\varepsilon }$ is a local isometry,
by our previous comments on uniqueness, it must be the lifted length metric.
\end{proof}

One consequence of Proposition \ref{metre} is that each $X_{\varepsilon }$
is path connected. Then it follows from the results of \cite{BPUU} that the
maps $\phi _{\varepsilon \delta }:X_{\delta }\rightarrow X_{\varepsilon }$
are also regular covering maps (in general surjectivity is the only
question, and this requires $X_{\varepsilon }$ to be chain connected).

\begin{lemma}
\label{nocrit}If $X$ is a geodesic space then the covering map $\phi
_{\varepsilon \delta }:X_{\delta }\rightarrow X_{\varepsilon }$ is injective
if and only if there are no homotopy critical values $\sigma $ with $\delta
\leq \sigma <\varepsilon $.
\end{lemma}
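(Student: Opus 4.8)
The plan is to prove both directions by understanding when $\phi_{\varepsilon\delta}$ fails to be injective in terms of the kernel of the induced map on the deck groups. Since $\phi_{\varepsilon\delta}:X_{\delta}\rightarrow X_{\varepsilon}$ is a covering map between the connected covering spaces of $X$, it is injective precisely when it is a homeomorphism, which in turn happens precisely when the natural surjection $\pi_{\delta}(X)\rightarrow\pi_{\varepsilon}(X)$ (induced by $[\lambda]_{\delta}\mapsto[\lambda]_{\varepsilon}$) is an isomorphism. So the lemma reduces to the statement: there is a $\delta$-loop $\lambda$ that is not $\delta$-null but is $\varepsilon$-null if and only if there is a homotopy critical value $\sigma$ with $\delta\leq\sigma<\varepsilon$.

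First I would handle the easy direction. If there is a homotopy critical value $\sigma$ with $\delta\leq\sigma<\varepsilon$, then by definition there is a $\sigma$-loop $\lambda$ that is not $\sigma$-null but is $\mu$-null for every $\mu>\sigma$; in particular $\lambda$ is $\varepsilon$-null (using $\sigma<\varepsilon$). A midpoint refinement of $\lambda$ produces a $\tfrac{\sigma}{2}$-chain in the same $\sigma$-homotopy class, and iterating, we obtain a $\delta$-loop (in fact any scale $\leq\delta$) that is $\sigma$-homotopic, hence $\varepsilon$-homotopic, to $\lambda$; call it $\lambda'$. Since $\lambda$ is not $\sigma$-null and $\delta\leq\sigma$, $\lambda'$ is not $\delta$-null (a $\delta$-homotopy is a $\sigma$-homotopy, so $\delta$-nullity would force $\sigma$-nullity). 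But $\lambda'$ is $\varepsilon$-null, so $\phi_{\varepsilon\delta}$ identifies the distinct point $[\lambda'\ast\alpha]_{\delta}$ with $[\alpha]_{\delta}$ for a suitable chain $\alpha$ from the basepoint, hence is not injective.

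For the converse, suppose there are no homotopy critical values in $[\delta,\varepsilon)$, and suppose toward a contradiction that $\phi_{\varepsilon\delta}$ is not injective; equivalently, there is a $\delta$-loop $\lambda$ that is not $\delta$-null but is $\varepsilon$-null. Consider the set $S:=\{\mu\in[\delta,\varepsilon): \lambda\text{ is not }\mu\text{-null}\}$. This set is nonempty (it contains $\delta$) and bounded above by $\varepsilon$; let $\sigma:=\sup S$. The crux of the argument — and the step I expect to be the main obstacle — is to show that $\sigma$ is actually a homotopy critical value, i.e. that $\lambda$ (or some $\sigma$-loop derived from it) is not $\sigma$-null but is $\mu$-null for all $\mu>\sigma$. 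The "$\mu$-null for all $\mu>\sigma$" part is immediate from the definition of supremum together with the fact that $\lambda$ is $\varepsilon$-null. The delicate part is showing $\lambda$ is not $\sigma$-null: this requires a closure/compactness argument showing that the property of a fixed loop being non-null is, in an appropriate sense, "closed from below" in the parameter. Here one should use that $\lambda$ has finitely many points, refine it to a $\tfrac{\sigma}{2}$-chain, and argue that if $\lambda$ were $\sigma$-null via some $\sigma$-homotopy $\langle\eta_0,\dots,\eta_k\rangle$, then since each basic move involves a strict inequality $d(x_i,x_{i+1})<\sigma$ among finitely many chains with finitely many points, there is slack: the same sequence of moves is a $\mu$-homotopy for some $\mu<\sigma$ slightly smaller, contradicting that $\mu\in S$ for $\mu$ close to $\sigma$. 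Once $\sigma$ is shown to be a homotopy critical value lying in $[\delta,\varepsilon)$, we have our contradiction, completing the proof.
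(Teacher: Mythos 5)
Your proof is correct and, for the key direction, follows essentially the same route as the paper: take $\sigma$ to be the supremum of scales at which $\lambda$ is non-null, and use the slack available in a finite homotopy (finitely many chains, finitely many points, strict inequalities) to show $\lambda$ cannot be $\sigma$-null. The paper handles the forward direction more economically by factoring $\phi _{\varepsilon \delta }=\phi _{\varepsilon \sigma }\circ \phi _{\sigma \delta }$ and using that $\phi _{\sigma \delta }$ is surjective, so non-injectivity at scale $\sigma $ already forces non-injectivity of the composite; your midpoint-refinement construction of a genuine $\delta $-loop is also correct and buys a bit more explicitness at the cost of a slightly longer argument (and it uses the geodesic hypothesis, which the factorization does not really need beyond surjectivity of $\phi_{\sigma\delta}$).

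One small point to tighten: you assert $\sigma :=\sup S\leq \varepsilon $, but you then treat $\sigma $ as lying in $[\delta ,\varepsilon )$ without excluding $\sigma =\varepsilon $. The same ``slack'' argument you use at $\sigma $ settles this: since $\lambda $ is $\varepsilon $-null via a fixed finite $\varepsilon $-homotopy, that homotopy is in fact a $\tau $-homotopy for some $\tau <\varepsilon $, so $\lambda $ is $\tau $-null and hence $\sigma \leq \tau <\varepsilon $. (Alternatively, note that if $\sigma =\varepsilon $ your argument that $\lambda $ is not $\sigma $-null would directly contradict $\lambda $ being $\varepsilon $-null.) Either remark closes the gap; as written the final sentence presumes $\sigma <\varepsilon $ without having established it.
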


\begin{proof}
If there is such a critical value $\sigma $ then there is a $\sigma $-loop $%
\lambda $ that is not $\sigma $-null but is $\varepsilon $-null. That is, $%
[\lambda ]_{\sigma }\neq \lbrack \ast ]_{\sigma }$ but $[\lambda
]_{\varepsilon }=[\ast ]_{\varepsilon }$, i.e. $\phi _{\varepsilon \sigma }$
is not injective. But since $\phi _{\sigma \delta }$ is surjective, $\phi
_{\varepsilon \delta }=\phi _{\varepsilon \sigma }\circ \phi _{\sigma \delta
}$ is not injective. Conversely, if $\phi _{\varepsilon \delta }$ is not
injective then there is some $\delta $-loop $\lambda $ that is not $\delta $%
-null but is $\varepsilon $-null. Let $\sigma :=\sup \{\tau :[\lambda
]_{\tau }\neq \lbrack \ast ]_{\tau }\}$; so $\delta \leq \sigma <\varepsilon 
$. If $\lambda $ were $\sigma $-null then any $\sigma $-null homotopy would
also be a $\tau $-homotopy for $\tau <\sigma $ sufficiently close to $\sigma 
$. So $\lambda $ is not $\sigma $-null; hence $\sigma $ is a homotopy
critical value and $\sigma <\varepsilon $.
\end{proof}

\begin{remark}
\label{gromov}Unfortunately the false statement that every free homotopy
class in a compact geodesic space has a shortest path, and this path is a
closed geodesic, is present in both editions of \cite{G1}, \cite{G2},
despite the intermediate publication of two kinds of counterexamples in (%
\cite{BPS}). In one example, a homotopy class in an infinite dimensional
\textquotedblleft weakly flat\textquotedblright\ torus contains no
rectifiable curves at all. One can also see this sort of thing in the
Hawaiian Earring with its geodesic metric. In another example in \cite{BPS}
there is a loop that is rectifiable and shortest in its homotopy class, but
is not a closed geodesic. It can easily be checked directly (and as is
equally obvious from the fact that it is not a closed geodesic), this loop
is not metrically embedded, as predicted by Theorem \ref{essential}. The
false statement in (\cite{G1}, \cite{G2}) is evidently used in the proofs of
Theorem 3.4 in \cite{SW1} and Theorem 2.7 in \cite{SW3}, although it can be
worked around using methods analogous to those found in \cite{SW4} to solve
a similar problem. Using $\varepsilon $-chains avoids all such
rectifiability issues. The next theorem clarifies the situation.
\end{remark}

\begin{theorem}
\label{slsc}If $X$ is a compact semilocally simply connected geodesic space
then the homotopy critical spectrum has a positive lower bound. If $%
\varepsilon >0$ is any such lower bound then

\begin{enumerate}
\item $\phi _{\varepsilon }:X_{\varepsilon }\rightarrow X$ is the universal
covering map of $X$.

\item The function $\Lambda $ is length preserving and hence the restriction
to $\pi _{1}(X)\rightarrow \pi _{\varepsilon }(X)$ is an isomorphism.

\item Every path has a shortest path in its fixed-endpoint homotopy class,
which is either constant or locally minimizing.

\item Every path loop has a shortest path in its free homotopy class, which
is either constant or a closed geodesic.
\end{enumerate}
\end{theorem}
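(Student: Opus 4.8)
The plan is to establish the positive lower bound first, then deduce the four numbered statements from it together with the machinery already assembled in this section. For the lower bound, I would argue by contradiction: suppose there is a sequence $\varepsilon_j \to 0$ of homotopy critical values, with corresponding $\varepsilon_j$-critical $\varepsilon_j$-loops $\lambda_j$. By Corollary \ref{exister} we may assume each $\lambda_j$ realizes its length, and since a critical loop cannot be too short (a short $\varepsilon_j$-loop is $\varepsilon_j$-null by the positive-definite part of Proposition \ref{norm}), each $\lambda_j$ has a definite length bounded below in terms of $\varepsilon_j$; after midpoint refinement and an application of Lemma \ref{lestl}, we get control on the size $\nu(\lambda_j)$. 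The key point is that semilocal simple connectivity gives, by compactness, a uniform $r>0$ such that every loop lying in some $B(p,r)$ is null-homotopic in $X$; via the function $\Lambda$ and Corollary \ref{homimp}, this translates into the statement that for $\varepsilon_j$ small enough, any $\varepsilon_j$-loop whose points all lie within a single $r$-ball is $\varepsilon_j$-null. One then takes a convergent subsequence of the refined critical loops (the discrete Ascoli argument following Lemma \ref{lestl}) whose limit is a point; for large $j$ the whole loop sits in an $r$-ball, contradicting criticality.

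Granting the lower bound $\varepsilon$, statement (1) follows because, by Lemma \ref{nocrit}, for $\delta < \varepsilon$ the covering maps $\phi_{\varepsilon\delta}: X_\delta \to X_\varepsilon$ are homeomorphisms (there are no critical values in $[\delta,\varepsilon)$), so $X_\varepsilon$ is, up to isometry, the inverse limit of the $X_\delta$; since $X$ is semilocally simply connected it has a universal cover, and a standard argument identifies this inverse limit with the universal cover — concretely, $X_\varepsilon$ is simply connected because any loop in it, being compact, lifts through some $\phi_{\varepsilon\delta}$ and any $\delta$-loop in $X$ small enough is $\varepsilon$-null, so $\pi_\delta(X) \to \pi_\varepsilon(X)$ is eventually an isomorphism and the tower stabilizes at the simply connected cover. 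For statement (2): $\Lambda: \pi_1(X) \to \pi_\varepsilon(X)$ is always surjective (shown in the excerpt via chordings) and length non-increasing; injectivity follows from (1), since both $X_\varepsilon$ and the topological universal cover are the universal covering space and the deck groups must agree. That $\Lambda$ is length \emph{preserving} uses that $\phi_\varepsilon$ is a local isometry (Proposition \ref{action}, Proposition \ref{metre}): the infimal length $\left\vert [\alpha]_\varepsilon\right\vert$ is computed downstairs as the infimal length in the homotopy class, which is exactly $\left\vert [c]\right\vert$ for a representing path $c$.

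For statements (3) and (4), I would work in $X_\varepsilon$, which by Proposition \ref{metre} is a proper geodesic space (here we use that a compact, hence proper, $X$ gives proper $X_\varepsilon$). Given a path $c$ in $X$, lift it to a path $\widetilde{c}$ in $X_\varepsilon$ with fixed endpoints $\widetilde{p}, \widetilde{q}$; since $X_\varepsilon$ is proper geodesic there is a geodesic $\gamma$ from $\widetilde{p}$ to $\widetilde{q}$, and $\phi_\varepsilon \circ \gamma$ is a path in $X$ in the fixed-endpoint homotopy class of $c$ (because $X_\varepsilon$ is simply connected, any two lifts with the same endpoints are path-homotopic, and $\phi_\varepsilon$ carries this homotopy down) of length $\le \left\vert [c]\right\vert$, hence shortest; being the projection of a geodesic under a local isometry, it is locally minimizing, or constant if $\widetilde{p}=\widetilde{q}$. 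For (4), a free homotopy class of loops corresponds to a conjugacy class in $\pi_\varepsilon(X) \cong \pi_1(X)$, equivalently to an orbit of the deck action on $X_\varepsilon$; the shortest loop in the free class is the projection of a shortest geodesic segment from a point $\widetilde{x}$ to its image $g\widetilde{x}$ under the corresponding deck transformation $g$, minimized over $\widetilde{x}$ — this minimum is attained by properness and a standard displacement-function argument, and its projection closes up to a closed geodesic (local minimality at the basepoint follows because one can also slide the basepoint). The main obstacle I expect is the lower bound step: making the uniform-$r$-ball consequence of semilocal simple connectivity interact correctly with the discrete homotopies — specifically, ensuring that "all points of the loop lie in one $r$-ball" really does force $\varepsilon_j$-nullity for the relevant $\varepsilon_j$, which needs the translation between continuous null-homotopy in $X$ and $\varepsilon$-nullity via $\Lambda$ and a refinement argument, and needs $\varepsilon_j$ small relative to $r$.
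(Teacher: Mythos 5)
Your argument for the positive lower bound has a genuine gap. You propose a contradiction argument: take $\varepsilon_j\to 0$ with $\varepsilon_j$-critical loops $\lambda_j$, extract a convergent subsequence "whose limit is a point," and conclude that for large $j$ the loop lies in a single $r$-ball. For the discrete Ascoli step (Lemma \ref{lestl} plus a compactness argument) you need an \emph{upper} bound on $L(\lambda_j)$, and for "the limit is a point" you actually need $L(\lambda_j)\to 0$; but what you invoke — that a critical loop cannot be too short, by the positive-definite part of Proposition \ref{norm} — gives only a \emph{lower} bound, which is the wrong direction and is in fact in tension with the conclusion you want. A priori an $\varepsilon_j$-critical loop could wind many times and its shortest representative could have length bounded away from zero, so your sketch does not rule that out. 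To repair this, one would need a preliminary reduction (this is exactly what the proof of Theorem \ref{main1} accomplishes, via Proposition \ref{deltaimp}): since $\lambda_j$ is $\delta$-null for $\delta$ slightly larger than $\varepsilon_j$, it factors as a product of $\varepsilon_j$-refinements of $\delta$-small loops, at least one of which must fail to be $\varepsilon_j$-null and has length $\leq 3\delta\approx 3\varepsilon_j$. Only after this replacement does your Ascoli-plus-semilocal-simple-connectivity argument close. The paper itself sidesteps all of this by citing Proposition 69 and Theorem 77 of \cite{BPUU}, which directly give that $\phi_\sigma$ is the simply connected covering map for all sufficiently small $\sigma$; the lower bound and part (1) then follow from Lemma \ref{nocrit}.

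Related to this, your argument for part (1) — stabilization of the tower $\{X_\delta\}$ at the universal cover — never actually establishes that $X_\varepsilon$ is simply connected; knowing that $\pi_\delta(X)\to\pi_\varepsilon(X)$ is an isomorphism for small $\delta$ is consistent with $X_\varepsilon$ having residual topology. The real content, again supplied by the cited references, is that $\ker\Lambda$ is trivial for small $\varepsilon$ (equivalently, the Spanier group of the cover by small balls vanishes), which is where semilocal simple connectivity genuinely enters. Your treatments of parts (2)–(4) are essentially sound: part (2) matches the paper's strategy (realize $|[\alpha]_\varepsilon|$ by Corollary \ref{exister}, chord, compare lifts), and for parts (3)–(4) your approach of lifting to the proper geodesic space $X_\varepsilon$, taking a geodesic, and projecting is a clean alternative to the paper's in-$X$ argument, which chords a shortest $\varepsilon$-loop and shows local minimality by replacing short subarcs within $\varepsilon/2$-balls.
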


\begin{proof}
Proposition 69 and Theorem 77 from \cite{BPUU} imply that for all
sufficiently small $\sigma >0$, $\phi _{\sigma }$ is the simply connected
covering map of $X$. Lemma \ref{nocrit} shows that $\phi _{\varepsilon }$ is
equivalent to $\phi _{\sigma }$ since there are no homotopy critical values
between $\varepsilon $ and $\sigma $, proving the first part. Now let $%
c:[0,a]\rightarrow X$ be a path; take $\ast =c(0)$ to be the basepoint. By
definition, $\Lambda ([c])=[\alpha ]_{\varepsilon }$, where $\alpha
:=\{c(t_{0}),...,c(t_{n})\}$ for a partition $\{0=t_{0},...,t_{n}=a\}$ that
is sufficiently fine. By Corollary \ref{exister} there is some $\beta \in
\lbrack \alpha ]_{\varepsilon }$ such that $\left\vert [\alpha
]_{\varepsilon }\right\vert =L(\beta )$. Let $c^{\prime }$ be any chording
of $\beta $; so $\left\vert [\alpha ]_{\varepsilon }\right\vert =L(\beta
)=L(c^{\prime })$. Now according to Proposition \ref{ender}, the unique
lifts of $c$ and $c^{\prime }$ at the basepoint of $X_{\varepsilon }$ have
the same endpoint, and therefore form a loop. But since $X_{\varepsilon }$
is simply connected, this means that $c$ and $c^{\prime }$ are homotopic,
and we have that $\left\vert [c]\right\vert \leq L(c^{\prime })=\left\vert
[\alpha ]_{\varepsilon }\right\vert $. Since we already have the other
inequality, the second part is finished. Moreover, we have shown that $%
\left\vert [c]\right\vert $ is actually realized by any chording of a
shortest $\varepsilon $-loop in $\Lambda ([c])$. If any segment $\sigma $ of
such a shortest loop having length at most $\frac{\varepsilon }{2}$ were not
a geodesic then the endpoints of $\sigma $ could be joined by a shorter
geodesic $\sigma ^{\prime }$. But then the loop formed by these two paths
would lie in a ball of radius $\frac{\varepsilon }{2}$, and hence would lift
as a loop. That is, we could replace $\sigma $ by $\sigma ^{\prime }$ while
staying in the same homotopy class, a contradiction. This proves the third
part, and the proof of the fourth part is similar.
\end{proof}

\begin{remark}
In the previous theorem, if $X$ is already simply connected then the proof
shows that the homotopy critical spectrum is empty (the statement of the
theorem is still correct in this case, since any real number is a lower
bound for the empty set). Conversely, if $X$ is compact and semilocally
simply connected with empty covering spectrum, then $X$ is simply connected.
The latter implication is not true without the assumption that $X$ is
semilocally simply connected (see Example \ref{empty}).
\end{remark}

\begin{definition}
\label{refinement} Let $c:[0,L]\rightarrow X$ be an arclength parameterized
path in a metric space. A subdivision $\varepsilon $-chain of $c$ is an $%
\varepsilon $-chain $\{x_{0},...,x_{n}\}$ of the form $x_{i}:=c(t_{i})$ for
some subdivision $t_{0}=0<\cdot \cdot \cdot <t_{n}=L$ such that for all $%
t_{i}$, $t_{i+1}-t_{i}<\varepsilon $ (we will refer to this condition as $%
\varepsilon $-fine). If $X$ is a geodesic space and $\alpha $ is a chain in $%
X$ then a refinement of $\alpha $ consists of a chain $\beta $ formed by
inserting between each $x_{i}$ and $x_{i+1}$ some subdivision chain of a
geodesic joining $x_{i}$ and $x_{i+1}$. If $\beta $ is an $\varepsilon $%
-chain we will call $\beta $ an $\varepsilon $-refinement of $\alpha $.
\end{definition}

Since $c$ is $1$-Lipschitz, any subdivision $\varepsilon $-chain is indeed
an $\varepsilon $-chain. Obviously a refinement of an $\varepsilon $-chain $%
\alpha $ is $\varepsilon $-homotopic to $\alpha $ (just add the points one
at a time) and hence any two refinements of $\alpha $ are $\varepsilon $%
-homotopic. A special case is the midpoint refinement defined in the
Introduction.

\begin{definition}
If $X$ is a metric space and $\varepsilon >0$, an $\varepsilon $-loop of the
form $\lambda =\alpha \ast \tau \ast \overline{\alpha }$, where $\nu (\tau
)=3$, will be called $\varepsilon $-small. Note that this notation includes
the case when $\alpha $ consists of a single point--i.e. $\lambda =\tau $.
\end{definition}

Note that any $\varepsilon $-small loop is $\varepsilon $-null, although it
may or may not be $\delta $-null for smaller $\delta $.

\begin{proposition}
\label{deltaimp}Let $X$ be a geodesic space and $0<\varepsilon <\delta $.
Suppose $\alpha ,\beta $ are $\varepsilon $-chains and $\left\langle \gamma
_{0},...,\gamma _{n}\right\rangle $ is a $\delta $-homotopy such that $%
\gamma _{0}=\alpha $ and $\gamma _{n}=\beta $. Then $[\beta ]_{\varepsilon
}=[\lambda _{1}\ast \cdot \cdot \cdot \ast \lambda _{r}\ast \alpha \ast
\lambda _{r+1}\ast \cdot \cdot \cdot \ast \lambda _{n}]_{\varepsilon }$,
where each $\lambda _{i}$ is an $\varepsilon $-refinement of a $\delta $%
-small loop.
\end{proposition}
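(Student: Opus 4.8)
The plan is to refine the given $\delta$-homotopy down to the scale $\varepsilon$ while keeping track of the ``error'' introduced by each basic move. Write $\alpha=\gamma_{0}$ and $\beta=\gamma_{n}$, with common starting point $\ast$. Since there are only finitely many $\gamma_{i}$, each with finitely many points, there are only finitely many ordered pairs $(p,q)$ occurring as consecutive points of some $\gamma_{i}$, and each satisfies $d(p,q)<\delta$. For each such pair fix once and for all a geodesic from $p$ to $q$ and an $\varepsilon$-fine subdivision chain $\sigma_{p,q}$ of it (possible because $\varepsilon<\delta$). For $\gamma_{i}=\{z_{0},\dots,z_{m}\}$ set $\gamma_{i}^{\ast}:=\sigma_{z_{0}z_{1}}\ast\cdots\ast\sigma_{z_{m-1}z_{m}}$, an $\varepsilon$-refinement of $\gamma_{i}$. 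Because $\gamma_{0}$ and $\gamma_{n}$ are already $\varepsilon$-chains and a refinement of an $\varepsilon$-chain is $\varepsilon$-homotopic to it, $[\gamma_{0}^{\ast}]_{\varepsilon}=[\alpha]_{\varepsilon}$ and $[\gamma_{n}^{\ast}]_{\varepsilon}=[\beta]_{\varepsilon}$.

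The core is the one-step claim: for each $i$ there is an $\varepsilon$-loop $\mu_{i}$ at $\ast$, itself an $\varepsilon$-refinement of a $\delta$-small loop, with $[\gamma_{i}^{\ast}]_{\varepsilon}=[\mu_{i}\ast\gamma_{i-1}^{\ast}]_{\varepsilon}$. Suppose the move $\gamma_{i-1}\to\gamma_{i}$ inserts a point $w$ between $z_{j}$ and $z_{j+1}$ (a deletion is the mirror image). Only the block $\sigma_{z_{j}z_{j+1}}$ is affected, so $\gamma_{i-1}^{\ast}=P\ast\sigma_{z_{j}z_{j+1}}\ast Q$ and $\gamma_{i}^{\ast}=P\ast\sigma_{z_{j}w}\ast\sigma_{wz_{j+1}}\ast Q$ with the \emph{same} prefix $P$ (from $\ast$ to $z_{j}$) and suffix $Q$. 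Inserting the $\varepsilon$-null loop $\overline{\sigma_{z_{j}z_{j+1}}}\ast\sigma_{z_{j}z_{j+1}}$ at the point $z_{j+1}$ rewrites $\gamma_{i}^{\ast}$ as $P\ast\rho\ast\sigma_{z_{j}z_{j+1}}\ast Q$, where $\rho:=\sigma_{z_{j}w}\ast\sigma_{wz_{j+1}}\ast\overline{\sigma_{z_{j}z_{j+1}}}$ is an $\varepsilon$-loop at $z_{j}$ which, by construction, is an $\varepsilon$-refinement of the three-edge loop $\{z_{j},w,z_{j+1},z_{j}\}$ (its last edge refined by the reversed geodesic); all three edges have length $<\delta$ because $\gamma_{i-1}$ and $\gamma_{i}$ are $\delta$-chains. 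Inserting the $\varepsilon$-null loop $\overline{P}\ast P$ at $z_{j}$ then rewrites this as $(P\ast\rho\ast\overline{P})\ast(P\ast\sigma_{z_{j}z_{j+1}}\ast Q)=(P\ast\rho\ast\overline{P})\ast\gamma_{i-1}^{\ast}$, so $\mu_{i}:=P\ast\rho\ast\overline{P}$ works: since $P$ is an $\varepsilon$-refinement of the $\delta$-chain $\{z_{0},\dots,z_{j}\}$ and $\overline{P}$ of its reverse, $\mu_{i}$ is an $\varepsilon$-refinement of the $\delta$-small loop $\{z_{0},\dots,z_{j}\}\ast\{z_{j},w,z_{j+1},z_{j}\}\ast\{z_{j},\dots,z_{0}\}$. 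Degenerate cases ($j=0$, or $w$ coinciding with $z_{j}$ or $z_{j+1}$) are harmless. The deletion case is identical with the one-edge and two-edge blocks interchanged, producing $\mu_{i}$ an $\varepsilon$-refinement of $\{z_{0},\dots,z_{j}\}\ast\{z_{j},z_{j+1},w,z_{j}\}\ast\{z_{j},\dots,z_{0}\}$.

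Iterating the one-step claim gives $[\beta]_{\varepsilon}=[\gamma_{n}^{\ast}]_{\varepsilon}=[\mu_{n}\ast\mu_{n-1}\ast\cdots\ast\mu_{1}\ast\gamma_{0}^{\ast}]_{\varepsilon}=[\mu_{n}\ast\cdots\ast\mu_{1}\ast\alpha]_{\varepsilon}$, which is the assertion with $r=n$ upon setting $\lambda_{i}:=\mu_{n+1-i}$. To obtain the general ``sandwiched'' form one repeatedly pushes a leftmost factor past $\alpha$ via $\lambda\ast\alpha=\alpha\ast(\overline{\alpha}\ast\lambda\ast\alpha)$ (valid after inserting the $\varepsilon$-null loop $\alpha\ast\overline{\alpha}$ at the endpoint of $\alpha$); conjugation by $\overline{\alpha}$ carries an $\varepsilon$-refinement of a $\delta$-small loop $a\ast\tau\ast\overline{a}$ to one of $(\overline{\alpha}\ast a)\ast\tau\ast\overline{(\overline{\alpha}\ast a)}$, which is again $\delta$-small, so the factors retain the required form for any $r$.

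I expect the only genuine difficulty to be the bookkeeping in the one-step claim: ensuring that the refinements of consecutive $\gamma_{i}$ agree off the single affected block (this is exactly what the global choice of the chains $\sigma_{p,q}$ secures) and recognizing $\rho$ and $\mu_{i}$ as \emph{literal} $\varepsilon$-refinements of $\delta$-small loops rather than merely $\varepsilon$-homotopic to such --- which is why it matters that a refinement is allowed to refine some edges trivially, i.e.\ by a subdivision chain consisting of just its two endpoints. Everything else reduces to routine sequences of basic moves, using only that $\kappa\ast\overline{\kappa}$ is $\varepsilon$-null and that a refinement of an $\varepsilon$-chain is $\varepsilon$-homotopic to it.
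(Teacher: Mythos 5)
Your proof is correct and follows essentially the same strategy as the paper's: refine each $\gamma_i$ to an $\varepsilon$-chain via geodesic subdivisions and encode each basic move as a conjugate of a small triangle. Two small organizational improvements over the paper are worth noting: fixing the $\sigma_{p,q}$ once and for all makes the compatibility of consecutive refinements automatic, and by inserting null loops at the \emph{end} of the modified block for deletions as well as insertions you always multiply on the left, proving the slightly stronger statement with $r=n$ (the paper instead places deletion loops on the right), which also renders your closing ``pushing past $\alpha$'' paragraph unnecessary.
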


\begin{proof}
We will prove by induction that for every $k\leq n$, an $\varepsilon $%
-refinement $\gamma _{k}^{\prime }$ of $\gamma _{k}$ is $\varepsilon $%
-homotopic to $\lambda _{1}\ast \cdot \cdot \cdot \ast \alpha \ast \cdot
\cdot \cdot \ast \lambda _{k}$, where each $\lambda _{i}$ is an $\varepsilon 
$-refinement of a $\delta $-small loop. The case $k=0$ is trivial. Suppose
the statement is true for some $0\leq k<n$. The points required to $%
\varepsilon $-refine $\gamma _{k}$ to $\gamma _{k}^{\prime }\ $will be
denoted by $m_{i}$. Suppose that $\gamma _{k+1}$ is obtained from $\gamma
_{k}$ by adding a point $x$ between $x_{i}$ and $x_{i+1}$. Let $%
\{x_{i},a_{1},...,a_{k},x\}$ be an $\varepsilon $-refinement of $\{x_{i},x\}$
and $\{x,b_{1},...,b_{m},x_{i+1}\}$ an $\varepsilon $-refinement of $%
\{x,x_{i+1}\}$, so 
\begin{equation*}
\gamma _{k+1}^{\prime
}=%
\{x_{0},m_{0},...,x_{i},a_{1},...,a_{k},x,b_{1},...,b_{m},x_{i+1},m_{r},...,x_{j}\}
\end{equation*}%
is an $\varepsilon $-refinement of $\gamma _{k+1}$. Defining $\mu
_{k+1}:=\{x_{0},m_{0},...,x_{i}\}$ and 
\begin{equation*}
\kappa
_{k+1}=\{x_{i},a_{1},...,a_{k},x,b_{1},...,b_{m},x_{i+1},m_{r},...,x_{i}\}
\end{equation*}%
we have 
\begin{equation*}
\left[ \gamma _{k+1}^{\prime }\right] _{\varepsilon }=\left[ \mu _{k+1}\ast
\kappa _{k+1}\ast \overline{\mu _{k+1}}\ast \gamma _{k}^{\prime }\right]
_{\varepsilon }
\end{equation*}%
and since the homotopy is a $\delta $-homotopy, $\lambda _{k+1}:=\mu
_{k+1}\ast \kappa _{k+1}\ast \overline{\mu _{k+1}}$ is a refinement of a $%
\delta $-small loop. The case when a point is removed from $\gamma _{k}$ is
similar, except that the $\delta $-small loop is multiplied on the right.
\end{proof}

\begin{example}
Since circles play an important role in this paper, we'll conclude this
section with the simple example of the geodesic circle $C$ of circumference $%
1$. If $\varepsilon >\frac{1}{2}$ then since all points in $C$ are of
distance at most $\frac{1}{2}$, every $\varepsilon $-loop is $\varepsilon $%
-null: just remove the points (except the endpoints) one by one. The group $%
\pi _{\varepsilon }(C)$ is trivial and $\phi _{\varepsilon }:C_{\varepsilon
}\rightarrow C$ is an isometry. On the other hand, if $\varepsilon >0$ is
fairly small, it should be intuitively clear that it is impossible to
\textquotedblleft cross the hole\textquotedblright\ with an $\varepsilon $%
-homotopy, since any basic move \textquotedblleft spans a
triangle\textquotedblright\ with side lengths equal to $\varepsilon $;
therefore $\pi _{\varepsilon }(C)$ should be the non-trivial (and in fact
Theorem \ref{slsc} tells us that it will be $\mathbb{\pi }_{1}(C)$). One can
check that in fact the homotopy critical spectrum of $C$ is $\{\frac{1}{3}\}$%
--see \cite{BCH} for a nice argument involving \textquotedblleft discrete
winding numbers\textquotedblright ; this also follows from results in the
next section.
\end{example}

\section{Essential Triads and Circles}

\begin{definition}
\label{subdivide}If $c$ is an arclength paramerterized loop, we say that $c$
is $\varepsilon $-null if every (or equivalently, some) $\varepsilon $%
-subdivision chain of $c$ is $\varepsilon $-null.
\end{definition}

\begin{lemma}
\label{lennull}Every arclength parameterized loop of length less than $%
3\varepsilon $ in a geodesic space $X$ is $\varepsilon $-null.
\end{lemma}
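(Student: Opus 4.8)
The plan is to take an arclength parameterized loop $c:[0,L]\to X$ with $L<3\varepsilon$ and show that any $\varepsilon$-subdivision chain of $c$ is $\varepsilon$-null; by Definition \ref{subdivide} it suffices to exhibit one such chain that is $\varepsilon$-null. First I would pick a particularly convenient subdivision, namely three (or fewer) parameter values $0=t_0<t_1<t_2<t_3=L$ chosen so that each arc $c([t_i,t_{i+1}])$ has length strictly less than $\varepsilon$; this is possible precisely because $L<3\varepsilon$. Setting $x_i:=c(t_i)$, the chain $\alpha=\{x_0,x_1,x_2,x_3=x_0\}$ is an $\varepsilon$-chain, since $d(x_i,x_{i+1})$ is bounded by the length of the connecting arc, which is $<\varepsilon$. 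So I have reduced to showing that an $\varepsilon$-loop of size at most $3$ is $\varepsilon$-null.

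The key step is then to argue that $\alpha=\{x_0,x_1,x_2,x_0\}$ is $\varepsilon$-null. Here I would use that $X$ is geodesic together with the fact that consecutive points of $\alpha$ are at distance $<\varepsilon$. The cleanest route is to observe that a chording $\widehat{\alpha}$ of $\alpha$ is a path loop whose total length is at most $d(x_0,x_1)+d(x_1,x_2)+d(x_2,x_0)\le L(c)<3\varepsilon$, in fact each chord lies in the open $\varepsilon$-ball about its initial point. I would then invoke Corollary \ref{homimp}: since $c$ itself is a stringing of $\alpha$ (each arc $c([t_i,t_{i+1}])$ lies in $B(x_i,\varepsilon)$ because it has length $<\varepsilon$) and the constant loop at $x_0$ is trivially path homotopic to $c$ inside... — wait, that is not automatic. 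The honest argument is instead purely combinatorial: I claim directly that $\{x_0,x_1,x_2,x_0\}$ is $\varepsilon$-homotopic to $\{x_0\}$ by removing points one at a time. To remove $x_1$ I need $d(x_0,x_2)<\varepsilon$; to then remove $x_2$ I need $d(x_0,x_0)=0<\varepsilon$, which is free. So the only thing to check is $d(x_0,x_2)<\varepsilon$, and this follows because $d(x_0,x_2)=d(x_3,x_2)\le L(c([t_2,t_3]))<\varepsilon$ by the choice of subdivision. Hence $\alpha\to\{x_0,x_2,x_0\}\to\{x_0,x_0\}\to\{x_0\}$ is an $\varepsilon$-homotopy.

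Finally I would note that a general $\varepsilon$-subdivision chain $\beta$ of $c$ is obtained from $\alpha$ by refinement (both are subdivision chains of the same arclength loop, so $\beta$ is $\varepsilon$-homotopic to a common refinement of $\alpha$ and $\beta$, as remarked after Definition \ref{refinement}), whence $\beta$ is also $\varepsilon$-null; this justifies the ``every (or equivalently, some)'' in Definition \ref{subdivide} for this loop. The main obstacle is really just the bookkeeping of choosing the three-point subdivision with each arc of length $<\varepsilon$ and being careful that the distance estimates use the arclength of $c$ rather than straight-line distances — once that is set up, the $\varepsilon$-nullhomotopy is immediate. If $L$ is so small that fewer than three cut points are needed, the same argument works with a chain of size $1$ or $2$, which is $\varepsilon$-null for trivial reasons.
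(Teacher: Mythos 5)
Your argument is correct and is essentially the paper's proof: choose a three-point $\varepsilon$-fine subdivision of the loop and delete the two interior points one at a time, the only nontrivial check being that since $c(t_3)=c(t_0)$ the first deletion is legal. The paper deletes $c(t_2)$ then $c(t_1)$, using $d(c(t_1),c(t_3))=d(c(t_1),c(t_0))<\varepsilon$; you delete $c(t_1)$ then $c(t_2)$, using $d(c(t_0),c(t_2))=d(c(t_3),c(t_2))<\varepsilon$, which is the identical estimate with the roles of the endpoints swapped.
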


\begin{proof}
Let $c:[0,L]\rightarrow X$ be arclength parameterized with $c(0)=c(L)=p$ and 
$0<L<3\varepsilon $. Then there exists an $\varepsilon $-fine subdivision $%
\{0=t_{0},t_{1},t_{2},t_{3}=L\}$. Since $%
d(c(t_{1}),c(t_{3}))=d(c(t_{1}),c(t_{0}))<\varepsilon $, we may simply
remove $c(t_{2})$ and then $c(t_{1})$ to get an $\varepsilon $-null homotopy.
\end{proof}

The next corollary is proved by simply joining the points in the loop by
geodesics and concatenating them to obtain an arclength parameterized loop
of length less than $3\varepsilon $.

\begin{corollary}
\label{three}If $\lambda $ is an $\varepsilon $-loop in a geodesic space $X$
of length less than $3\varepsilon $ then $\lambda $ is $\varepsilon $-null.
\end{corollary}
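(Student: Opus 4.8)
The plan is to reduce Corollary \ref{three} to Lemma \ref{lennull} exactly along the lines of the one-sentence hint, paying attention only to the bookkeeping that lets the discrete loop reappear as an $\varepsilon$-subdivision chain of a short arclength-parameterized loop.

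First I would write $\lambda =\{x_{0},x_{1},\dots ,x_{n}=x_{0}\}$, so that $L(\lambda )=\sum_{i=1}^{n}d(x_{i-1},x_{i})<3\varepsilon $. The degenerate case $L(\lambda )=0$ is handled separately: then every $d(x_{i-1},x_{i})=0$, so all the $x_{i}$ coincide and $\lambda $ is $\varepsilon $-homotopic to $\{x_{0}\}$ by deleting points one at a time. So assume $L(\lambda )>0$. Using that $X$ is geodesic, choose for each $i$ a geodesic $\gamma _{i}$ from $x_{i-1}$ to $x_{i}$ (a constant path if $x_{i-1}=x_{i}$), and let $c:[0,L]\rightarrow X$ be the arclength parameterization of the concatenation $\gamma _{1}\ast \cdots \ast \gamma _{n}$, where $L=L(\lambda )$; this is legitimate precisely because $L>0$. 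Since $\gamma _{n}$ ends at $x_{n}=x_{0}$, the path $c$ is a loop, and $L(c)=\sum L(\gamma _{i})=\sum d(x_{i-1},x_{i})=L(\lambda )<3\varepsilon $.

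The key observation is that the partition $t_{i}:=\sum_{k\leq i}d(x_{k-1},x_{k})$ satisfies $c(t_{i})=x_{i}$ and $t_{i}-t_{i-1}=d(x_{i-1},x_{i})<\varepsilon $, the last inequality holding exactly because $\lambda $ is an $\varepsilon $-chain. Hence $\lambda $ is literally an $\varepsilon $-subdivision chain of $c$ in the sense of Definitions \ref{refinement} and \ref{subdivide}. Now Lemma \ref{lennull} applies to $c$, an arclength parameterized loop with $0<L(c)<3\varepsilon $, and gives that $c$ is $\varepsilon $-null; by Definition \ref{subdivide} this means every $\varepsilon $-subdivision chain of $c$ is $\varepsilon $-null, and in particular $\lambda $ is $\varepsilon $-null.

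I do not anticipate any genuine obstacle here: the entire content of the corollary is already in Lemma \ref{lennull}, and the only points requiring a little care are the degenerate case $L(\lambda )=0$ and the verification that the arclength reparametrized chording of $\lambda $ has $\lambda $ itself among its $\varepsilon $-subdivision chains — which is immediate from the hypothesis that $\lambda $ is an $\varepsilon $-chain.
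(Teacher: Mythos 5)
Your proof is correct and follows exactly the route the paper indicates: join consecutive points of $\lambda$ by geodesics, arclength-parameterize the concatenation to get a loop $c$ of length $<3\varepsilon$ with $\lambda$ as a subdivision $\varepsilon$-chain, and invoke Lemma \ref{lennull}. The only extra care you take (handling $L(\lambda)=0$, and noting constant paths when $x_{i-1}=x_i$) is harmless; if anything, to fit Definition \ref{refinement} literally one might first delete repeated consecutive points by basic moves so the subdivision $t_0<\cdots<t_n$ is strictly increasing, but this is a trivial adjustment.
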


\begin{remark}
If $C$ is the image of a rectifiable loop of length $L$ in a metric space $X$
then by the basic theory of curves in metric spaces, for every point $x$ on $%
C$ there are precisely two possible arclength parameterizations $%
c:[0,L]\rightarrow X$ of $C$ such that $c(0)=c(L)=x$.
\end{remark}

\begin{proposition}
\label{cireq}The image $C$ of a rectifiable path loop of length $%
L=3\varepsilon $ in a geodesic space $X$ is an essential $\varepsilon $%
-circle if and only if either arclength parameterization of it is not $%
\varepsilon $-null.
\end{proposition}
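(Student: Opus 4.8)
The statement to prove is Proposition~\ref{cireq}: the image $C$ of a rectifiable path loop of length $L = 3\varepsilon$ is an essential $\varepsilon$-circle if and only if either arclength parameterization of $C$ is not $\varepsilon$-null. Recall that ``essential $\varepsilon$-circle'' means (by definition) the image of an arclength parameterized path loop of length $3\varepsilon$ that \emph{contains} an $\varepsilon$-loop that is not $\varepsilon$-null. So the ``only if'' direction is essentially definitional padding, and the real content is the ``if'' direction: if one arclength parameterization $c$ of $C$ yields an $\varepsilon$-subdivision chain that is not $\varepsilon$-null, then $C$ (as a subset) contains \emph{some} $\varepsilon$-loop that is not $\varepsilon$-null. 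But here the subdivision chain of $c$ is itself an $\varepsilon$-loop lying on $C$, so this is almost immediate; the subtlety the proposition is really addressing is the phrase ``either arclength parameterization,'' i.e.\ that the two opposite orientations (and the two choices of basepoint, which by Definition~\ref{subdivide} and Lemma~\ref{lennull}'s surrounding remarks amount to the same thing up to rotation) are equivalent as far as being $\varepsilon$-null.

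\textbf{Step 1: Reduce $\varepsilon$-nullity of the parameterization to $\varepsilon$-nullity of a specific loop on $C$.} By Definition~\ref{subdivide}, $c$ is $\varepsilon$-null iff some (equivalently every) $\varepsilon$-subdivision chain of $c$ is $\varepsilon$-null. Fix a basepoint $x \in C$ and the two arclength parameterizations $c, \bar c : [0,3\varepsilon] \to X$ with $c(0) = c(3\varepsilon) = \bar c(0) = \bar c(3\varepsilon) = x$, where $\bar c(t) = c(3\varepsilon - t)$. A convenient subdivision is $\{0, \varepsilon, 2\varepsilon, 3\varepsilon\}$, giving the $\varepsilon$-loop $\lambda := \{x, c(\varepsilon), c(2\varepsilon), x\}$ (each gap has length exactly $\varepsilon$, and since $C$ has circumference $3\varepsilon$ all three pairwise distances among $x, c(\varepsilon), c(2\varepsilon)$ are at most $\varepsilon$, actually equal to $\varepsilon$ unless the loop is not injective --- in any case this is a legitimate $\varepsilon$-chain after a midpoint refinement; I will just use a genuinely $\varepsilon$-fine subdivision such as $\{0, \varepsilon - \eta, 2\varepsilon - \eta, 3\varepsilon\}$ if strict inequality is needed, or simply pass to a midpoint refinement as in the definition of essential triad).

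\textbf{Step 2: The two orientations give $\varepsilon$-homotopic loops.} The subdivision chain of $\bar c$ with the analogous partition is precisely the reversal $\bar\lambda$ of $\lambda$ (read $c(2\varepsilon), c(\varepsilon), x$ in the other order). Now I claim $\lambda$ is $\varepsilon$-null iff $\bar\lambda$ is $\varepsilon$-null. One direction: if $\langle \lambda = \eta_0, \ldots, \eta_k = \{x\}\rangle$ is an $\varepsilon$-homotopy, then $\langle \bar\lambda = \bar\eta_0, \ldots, \bar\eta_k = \{x\}\rangle$ is an $\varepsilon$-homotopy (reversing each chain and each basic move), so $\bar\lambda$ is $\varepsilon$-null; the converse is symmetric. (This is the standard fact that reversal preserves $\varepsilon$-nullity, and is worth stating explicitly since it is used throughout.) Independence of the choice of basepoint $x$ on $C$ follows from Lemma~\ref{rotate}: moving the basepoint around $C$ replaces the subdivision loop by a cyclic permutation of itself, which is \emph{freely} $\varepsilon$-homotopic to the original; and for a loop, being freely $\varepsilon$-homotopic to an $\varepsilon$-null loop forces it to be $\varepsilon$-null (conjugating an $\varepsilon$-null loop by any chain $\alpha$ yields $\alpha \ast \{x\} \ast \overline\alpha$, which is $\varepsilon$-null). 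Hence ``$C$ admits \emph{an} arclength parameterization that is not $\varepsilon$-null'' is equivalent to ``\emph{every} such parameterization is not $\varepsilon$-null,'' and equivalent to ``the specific $\varepsilon$-loop $\lambda$ on $C$ is not $\varepsilon$-null.''

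\textbf{Step 3: Match this with the definition of essential $\varepsilon$-circle.} If $\lambda$ is not $\varepsilon$-null, then $C$ is the image of a length-$3\varepsilon$ arclength loop containing the $\varepsilon$-loop $\lambda$ which is not $\varepsilon$-null --- this is exactly Definition of essential $\varepsilon$-circle, so $C$ is an essential $\varepsilon$-circle. Conversely, suppose $C$ is an essential $\varepsilon$-circle, so it contains \emph{some} $\varepsilon$-loop $\mu$ that is not $\varepsilon$-null. The points of $\mu$ lie on $C$, and I must deduce that $\lambda$ (equivalently, the standard subdivision loop) is not $\varepsilon$-null. Here I would argue: $\mu$ is freely $\varepsilon$-homotopic to a subdivision chain of one of the two arclength parameterizations of $C$. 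Indeed, inserting points along $C$ between consecutive points of $\mu$ (via the short arcs of $C$, each of length $< \varepsilon$ since consecutive points of $\mu$ are $<\varepsilon$ apart and... here is the one genuine subtlety) refines $\mu$ to a chain whose points run monotonically around $C$, which up to rotation and refinement is the standard subdivision loop. The point needing care is that consecutive points of $\mu$, being at distance $<\varepsilon$ in $X$, are joined \emph{along $C$} by an arc of length $<\varepsilon$ --- this is true because on a circle of circumference $3\varepsilon$ the chordal (restricted-metric) distance and the shorter-arc distance satisfy: an arc of length $\geq \varepsilon$ would have its two endpoints... actually on a metrically embedded circle these agree, but $C$ need not yet be known to be metrically embedded. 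I would handle this by noting that if two points $p,q \in C$ have $d(p,q) < \varepsilon$ but both arcs of $C$ between them have length $\geq \varepsilon$, then the shorter arc $\gamma$ together with a geodesic from $q$ to $p$ forms a loop of length $< \text{(arc)} + \varepsilon \leq 2\varepsilon < 3\varepsilon$ on which $d(p,q)<\varepsilon$; Lemma~\ref{lennull} makes this loop $\varepsilon$-null, which lets me replace the jump $\{p,q\}$ in $\mu$ by the subdivision along the short arc $\gamma$ without changing the $\varepsilon$-homotopy class. Iterating, $\mu$ becomes (freely) $\varepsilon$-homotopic to a monotone subdivision loop of $C$ based at one of its points, hence by Step~2 to $\lambda$; since $\mu$ is not $\varepsilon$-null, neither is $\lambda$.

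\textbf{Main obstacle.} The routine parts are Steps~1 and 3's first half (pure unwinding of definitions) and the reversal symmetry in Step~2. The one place where real geometric input enters is the converse in Step~3: showing that an \emph{arbitrary} non-$\varepsilon$-null $\varepsilon$-loop with vertices on $C$ can be rerouted to run monotonically around $C$. The key tool is Lemma~\ref{lennull} (short loops are $\varepsilon$-null) applied to ``shortcut'' triangles formed by a short arc of $C$ and a chord, together with Lemma~\ref{rotate} for basepoint independence; the delicate bookkeeping is ensuring each rerouting step stays within the class of $\varepsilon$-chains and does not lengthen beyond control, which is exactly the kind of argument Proposition~\ref{close} and Lemma~\ref{lestl} are designed to support. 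I expect the authors may streamline this by proving a cleaner intermediate lemma (e.g.\ that any $\varepsilon$-loop supported on a length-$3\varepsilon$ rectifiable image is freely $\varepsilon$-homotopic to a ``winding'' loop that goes around $C$ an integer number of times), but the heart of the matter is the shortcut argument above.
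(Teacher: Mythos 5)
Your proposal captures the key ideas of the paper's proof of this proposition: replace each jump of an arbitrary non-$\varepsilon$-null $\varepsilon$-loop on $C$ by the shorter arc of $C$, reduce the resulting concatenation of arcs inside $C$, and observe that the reduced form winds an integer number of times around $C$, so a power of the parameterization must be non-$\varepsilon$-null. The paper executes this somewhat differently: it adds a single midpoint to each shorter arc $\sigma_i$ so that the result is a subdivision $\varepsilon$-chain of $\widetilde{c}=\sigma_1\ast\cdots\ast\sigma_n$, reduces $\widetilde{c}$ to a power of $c$ using the ``cancelled concatenation'' formalism from \cite{BPCRT}, and transfers the reduction to the discrete level all at once with Corollary~\ref{homimp} (path-homotopic stringings give $\varepsilon$-homotopic chains); you instead do the rerouting one jump at a time via Lemma~\ref{lennull}. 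Both routes work, and yours avoids invoking the lifting argument behind Corollary~\ref{homimp}. A few imprecisions worth flagging in your write-up: the shorter arc between two points at chord distance $<\varepsilon$ has length at most $3\varepsilon/2$, not $\varepsilon$, so your auxiliary loop has length $<5\varepsilon/2<3\varepsilon$ rather than $\le 2\varepsilon$ (the appeal to Lemma~\ref{lennull} still goes through); the subdivision $\{0,\varepsilon,2\varepsilon,3\varepsilon\}$ is not $\varepsilon$-fine (the paper subdivides $[0,3\varepsilon]$ into fourths); and after the cancellation your chain is a subdivision of $c^k$ for some integer $k$, not of $c$ itself --- you need to add that $k\neq 0$ (else $\mu$ would be $\varepsilon$-null) and that if $\lambda$ were $\varepsilon$-null then $\lambda^k$ would be as well, so $\lambda$ is not $\varepsilon$-null. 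You also mislabel the two directions at the outset (``essential $\Rightarrow$ not $\varepsilon$-null'' is the real content, not ``definitional padding''), but Step~3 then treats both correctly.
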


\begin{proof}
Let $c:[0,L]\rightarrow C$ be an arclength parameterization of $C$. If $C$
is not an essential $\varepsilon $-circle then by definition, every $%
\varepsilon $-chain in it is $\varepsilon $-null. But then any $\varepsilon $%
-subdivision of $c$, being an $\varepsilon $-chain, must be $\varepsilon $%
-null. Hence $c$ is by definition $\varepsilon $-null. Conversely, suppose
that $C$ is essential, and so contains an $\varepsilon $-loop $\alpha
=\{x_{0},...,x_{n}=x_{0}\}$ that is not $\varepsilon $-null, with $%
x_{i}:=c(t_{i})$. We will show that $\alpha $ is $\varepsilon $-homotopic to
a concatenation of chains that are subdivision $\varepsilon $-chains of $c$
or reversals of $c$. Then at least one of those subdivision chains must be
not $\varepsilon $-null, finishing the proof. Form a path as follows: choose
a shortest segment $\sigma _{i}$ of $c$ between $x_{i-1}$ and $x_{i}$. By
\textquotedblleft segment\textquotedblright\ we mean the restriction of $c$
to a closed interval, or a path of the form $c\mid _{\lbrack t,L]}\ast c\mid
_{\lbrack 0,s]}$ (i.e. when it is shorter to go through $x_{0}$). Let $%
\widetilde{c}:=\sigma _{1}\ast \cdot \cdot \cdot \ast \sigma _{n}$. Since
each $\sigma _{i}$ has length at most $\frac{L}{2}$, by adding points $b_{i}$
that bisect each segment $\sigma _{i}$ we see that $\alpha $ is $\varepsilon 
$-homotopic to a subdivision $\varepsilon $-chain $\widetilde{\alpha }%
:=\{x_{0},b_{1},x_{1},...,b_{n},x_{n}\}$ of $\widetilde{c}$. On the other
hand, $\widetilde{c}$ is path homotopic (in the image of $c$, in fact) to
its \textquotedblleft cancelled concatenation\textquotedblright\ $\sigma
_{1}\star \cdot \cdot \cdot \star \sigma _{n}$. Recall that the cancelled
concatenation $c_{1}\star c_{2}$ is formed by starting with the
concatenation $c_{1}\ast c_{2}$ and removing the maximal final segment of $%
c_{1}$ that is equal to an initial segment of $c_{2}$ with reversed
orientation (see \cite{BPCRT}, p. 1771, for more details). It is not hard to
check by induction that $\sigma _{1}\star \cdot \cdot \cdot \star \sigma
_{i} $ is of the form $\left( k_{1}\ast \cdot \cdot \cdot \ast k_{m}\right)
\ast d $, where the following are true: $k_{i}=c$ or $k_{i}=\overline{c}$
for all $i $ (and it is possible that $m=0$, meaning there are no $k_{i}$
factors), and for some $0\leq s<L$, $d$ is of the form $c\mid _{\lbrack
0,s]} $ or $\overline{c\mid _{\lbrack s,L]}}$. Since $\alpha $ is a loop, $%
\sigma _{1}\star \cdot \cdot \cdot \star \sigma _{n}$ has no nontrivial term 
$d$, and hence consists of concatenations of $c$ or $\overline{c}$. Since $%
\widetilde{c}$ is a stringing of $\widetilde{\alpha }$, Corollary \ref%
{homimp} implies that $\widetilde{\alpha }$, hence $\alpha $, is $%
\varepsilon $-homotopic to any subdivision $\varepsilon $-chain of $\sigma
_{1}\star \cdot \cdot \cdot \star \sigma _{n}$.
\end{proof}

A \textit{geodesic triangle} consists of three geodesics $\gamma _{1},\gamma
_{2},\gamma _{3}$ such that for some three points $v_{1},v_{2},v_{3}$, $%
\gamma _{i}$ goes from $v_{i}$ to $v_{i+1}$, with addition of vertices $%
\func{mod}3$). A geodesic triangle may be considered as a loop by taking the
arclength parameterization of the concatenation of the geodesics; as far as
being $\varepsilon $-null is concerned, the specific orientation clearly
doesn't matter. We say the triangle is $\varepsilon $-null if such a
parameterization $\varepsilon $-null.

\begin{proposition}
\label{tequil}Let $T$ be an $\varepsilon $-triad in a geodesic space. Then
any two $\varepsilon $-refinements of $\alpha _{T}$ are $\varepsilon $%
-homotopic. Moreover, the following are equivalent:

\begin{enumerate}
\item $T$ is essential.

\item No $\varepsilon $-refinement of $\alpha _{T}$ is $\varepsilon $-null.

\item Every geodesic triangle having $T$ as a vertex set is an essential $%
\varepsilon $-circle.
\end{enumerate}
\end{proposition}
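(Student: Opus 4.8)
The plan is to establish part~1 first, since once it is in hand both equivalences follow quickly via Proposition~\ref{cireq}. Note that part~1 is \emph{not} an immediate consequence of the remark after Definition~\ref{refinement}: $\alpha_T$ is not an $\varepsilon$-chain (consecutive corners are exactly $\varepsilon$ apart), and distinct $\varepsilon$-refinements may genuinely use different geodesics of length $\varepsilon$ between two corners of $T$, as well as different subdivisions.

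For part~1, let $\beta=\beta_0\ast\beta_1\ast\beta_2$ and $\beta'=\beta_0'\ast\beta_1'\ast\beta_2'$ be two $\varepsilon$-refinements of $\alpha_T$, where $\beta_i$ (resp.\ $\beta_i'$) is a subdivision $\varepsilon$-chain of a geodesic $\gamma_i$ (resp.\ $\gamma_i'$) from $x_i$ to $x_{i+1}$ (indices mod $3$). The key observation is that the loop $\overline{\beta_i'}\ast\beta_i'$ (based at $x_{i+1}$) and the loop $\beta_i\ast\overline{\beta_i'}$ (based at $x_i$) each have length exactly $2\varepsilon<3\varepsilon$, since $\gamma_i$ and $\gamma_i'$ are length-minimizing of length $\varepsilon$; hence by Corollary~\ref{three} both are $\varepsilon$-null. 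I would then replace the three pieces of $\beta$ one at a time: insert the $\varepsilon$-null loop $\overline{\beta_0'}\ast\beta_0'$ right after $\beta_0$, observe that $\beta_0\ast\overline{\beta_0'}$ now occurs as a sub-loop at $x_0$ at the front of the chain and can be deleted, concluding $[\beta]_\varepsilon=[\beta_0'\ast\beta_1\ast\beta_2]_\varepsilon$; repeating for the middle and last pieces yields $[\beta]_\varepsilon=[\beta']_\varepsilon$. (This argument also subsumes the case $\gamma_i=\gamma_i'$ with differing subdivisions.) The only care needed is bookkeeping about which corner each inserted null loop is based at so that every concatenation is defined; no hypothesis beyond ``geodesic space'' is used.

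The equivalence (1)$\Leftrightarrow$(2) is then immediate. A midpoint refinement of $\alpha_T$ is an $\varepsilon$-refinement (each length-$\varepsilon$ geodesic gets the $\varepsilon$-fine subdivision $\{0,\varepsilon /2,\varepsilon \}$), so (2) forces every midpoint refinement to be non-$\varepsilon$-null, i.e.\ $T$ is essential; conversely, if some midpoint refinement is not $\varepsilon$-null then, by part~1, no $\varepsilon$-refinement of $\alpha_T$ is, which is (2).

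For (2)$\Leftrightarrow$(3) I would route through Proposition~\ref{cireq}. Let $\Gamma=\gamma_0\ast\gamma_1\ast\gamma_2$ be a geodesic triangle with vertex set $T$; reparameterized by arclength it is a rectifiable path loop of length $3\varepsilon$. Any $\varepsilon$-refinement of $\alpha_T$ assembled from $\gamma_0,\gamma_1,\gamma_2$ is a subdivision $\varepsilon$-chain of $\Gamma$ (it runs through the corners in order and is $\varepsilon$-fine), and conversely any subdivision $\varepsilon$-chain of $\Gamma$, after inserting the corner points $x_0,x_1,x_2$ (a basic move that preserves $\varepsilon$-fineness), is such an $\varepsilon$-refinement. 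By Definition~\ref{subdivide} together with Proposition~\ref{cireq}, $\Gamma$ being an essential $\varepsilon$-circle is equivalent to no subdivision $\varepsilon$-chain of $\Gamma$ being $\varepsilon$-null. Hence (2) gives that every such $\Gamma$ is an essential $\varepsilon$-circle, which is (3); and (3) gives that every $\varepsilon$-refinement of $\alpha_T$, being a subdivision $\varepsilon$-chain of the essential circle determined by the geodesics it uses, is not $\varepsilon$-null, which is (2). The main obstacle throughout is part~1 — specifically the possibility of non-homotopic length-$\varepsilon$ geodesics between two corners of the triad — and the length-$2\varepsilon$ null-loop trick is precisely what disposes of it.
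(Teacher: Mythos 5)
Your proof is correct and follows essentially the same strategy as the paper: Corollary~\ref{three} (loops of length $<3\varepsilon$ are $\varepsilon$-null) powers part~1, and Proposition~\ref{cireq} powers the equivalence with (3). Your treatment of part~1 is, if anything, a bit more streamlined. The paper first shows that any two \emph{midpoint} refinements are $\varepsilon$-homotopic (by swapping one midpoint at a time via the length-$\le 2\varepsilon$ loop $\{x_0,m_0,x_1,m_0',x_0\}$), then reduces a general $\varepsilon$-refinement to a midpoint refinement by the observation that they admit a common $\varepsilon$-refinement as $\varepsilon$-chains. You instead swap an entire side at a time, inserting $\overline{\beta_0'}\ast\beta_0'$ and killing $\beta_0\ast\overline{\beta_0'}$; this bypasses the ``common refinement'' step and handles the midpoint case as a special instance. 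Both hinges are the same $2\varepsilon<3\varepsilon$ observation. For $3\Rightarrow 1$ the paper is slightly terse (it says ``any midpoint refinement of $T$ is an $\varepsilon$-subdivision of $C$,'' which strictly requires the midpoints to lie on $C$, so one implicitly invokes part~1 to pass to a suitable representative); your routing through Proposition~\ref{cireq} together with the correspondence between $\varepsilon$-refinements of $\alpha_T$ and subdivision $\varepsilon$-chains of a geodesic triangle makes this more explicit. No gaps.
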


\begin{proof}
Let $T:=\{x_{0},x_{1},x_{2}\}$ and $\beta
=\{x_{0},m_{0},x_{1},m_{1},x_{2},m_{2},x_{0}\}$ be a midpoint refinement of $%
\alpha _{T}$. If $m_{0}^{\prime }$ is another midpoint between $x_{0}$ and $%
x_{1}$ then the $\varepsilon $-chain $\{x_{0},m_{0},x_{1},m_{0}^{\prime
},x_{0}\}$ has length at most $2\varepsilon <3\varepsilon $ and is $%
\varepsilon $-null by Corollary \ref{three}. Therefore $\beta $ is $%
\varepsilon $-homotopic to $\{x_{0},m_{0}^{\prime
},x_{1},m_{1},x_{2},m_{2},x_{0}\}$. A similar argument shows that the other
two midpoints may be replaced, up to $\varepsilon $-homotopy. In other
words, any two midpoint refinements of $\alpha _{T}$ are $\varepsilon $%
-homotopic. But any $\varepsilon $-refinement of $\alpha _{T}$ has a common
refinement with a midpoint refinement, so by the comments after Definition %
\ref{refinement}, any two $\varepsilon $-refinements of $\alpha _{T}$ are $%
\varepsilon $-homotopic.

$1\Rightarrow 2$. If $T$ is essential then by definition some midpoint
refinement of $\alpha _{T}$ is not $\varepsilon $-null. By the very first
statement of this proposition, any other $\varepsilon $-refinement of $%
\alpha _{T}$ is not $\varepsilon $-null. $2\Rightarrow 3$. Suppose $%
C:=(\gamma _{0},\gamma _{1},\gamma _{2})$ is any geodesic triangle having $T$
as a vertex set. Then the subdivision chain of $C$ consisting of the
vertices and midpoints of the geodesics is an $\varepsilon $-refinement of $%
\alpha _{T}$ and is not $\varepsilon $-null by assumption. Since $C$ also
has length $3\varepsilon $, by definition $C$ is an essential $\varepsilon $%
-circle, and $3$ is proved. $3\Rightarrow 1$. Form a geodesic triangle,
hence an essential $\varepsilon $-circle $C$, having the points of $T$ as
vertices. Then any midpoint refinement of $T$ is an $\varepsilon $%
-subdivision of $C$, which by Proposition \ref{cireq} is not $\varepsilon $%
-null. By definition, $T$ is essential.
\end{proof}

An immediate consequence of Proposition \ref{tequil} is the following:

\begin{corollary}
\label{triadee}The following statements are equivalent for two essential $%
\varepsilon $-triads $T_{1},T_{2}$ in a geodesic space:

\begin{enumerate}
\item $T_{1}$ is equivalent to $T_{2}$.

\item Every $\varepsilon $-refinement of $\alpha _{T_{1}}$ is freely $%
\varepsilon $-homotopic to every $\varepsilon $-refinement of either $\alpha
_{T_{2}}$ or $\overline{\alpha _{T_{2}}}$.

\item Some $\varepsilon $-refinement of $\alpha _{T_{1}}$ is freely $%
\varepsilon $-homotopic to some $\varepsilon $-refinement of either $\alpha
_{T_{2}}$ or $\overline{\alpha _{T_{2}}}$.
\end{enumerate}
\end{corollary}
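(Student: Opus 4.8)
The plan is to deduce Corollary~\ref{triadee} directly from Proposition~\ref{tequil} together with the definition of equivalence of essential triads, by reducing the ``some'' statements to the ``every'' statements via the transitivity of free $\varepsilon$-homotopy and the fact (first sentence of Proposition~\ref{tequil}) that all $\varepsilon$-refinements of a given $\alpha_T$ are $\varepsilon$-homotopic, hence a fortiori freely $\varepsilon$-homotopic.

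\textbf{Key steps.} First I would observe that the implication $(2)\Rightarrow(3)$ is trivial, since an $\varepsilon$-refinement of $\alpha_{T_1}$ exists (take a midpoint refinement) and $(2)$ then gives the required free $\varepsilon$-homotopy. Next, for $(1)\Rightarrow(2)$: by definition of equivalence, \emph{some} midpoint refinement of $\alpha_{T_1}$ is freely $\varepsilon$-homotopic to \emph{some} midpoint refinement of $\alpha_{T_2}$ or of $\overline{\alpha_{T_2}}$. Given any other $\varepsilon$-refinement $\rho_1$ of $\alpha_{T_1}$ and any other $\varepsilon$-refinement $\rho_2$ of $\alpha_{T_2}$ (resp.\ of $\overline{\alpha_{T_2}}$), Proposition~\ref{tequil} says $\rho_1$ is $\varepsilon$-homotopic to the chosen refinement of $\alpha_{T_1}$ and $\rho_2$ is $\varepsilon$-homotopic to the chosen refinement of $\alpha_{T_2}$ (resp.\ $\overline{\alpha_{T_2}}$). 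Here I need the small remark that a genuine $\varepsilon$-homotopy between $\varepsilon$-loops is in particular a free $\varepsilon$-homotopy — taking the conjugating chains $\alpha,\beta$ of the definition of free $\varepsilon$-homotopy to be the constant chain at the basepoint. Since free $\varepsilon$-homotopy is an equivalence relation (this is standard and implicit in \cite{BPUU}; it is the discrete analog of free homotopy being an equivalence relation, and follows by concatenating conjugating chains), chaining these three free $\varepsilon$-homotopies together gives that $\rho_1$ is freely $\varepsilon$-homotopic to $\rho_2$. Finally, $(3)\Rightarrow(1)$: if \emph{some} $\varepsilon$-refinement of $\alpha_{T_1}$ is freely $\varepsilon$-homotopic to \emph{some} $\varepsilon$-refinement of $\alpha_{T_2}$ or $\overline{\alpha_{T_2}}$, then in particular taking midpoint refinements on both sides — which are $\varepsilon$-homotopic, hence freely $\varepsilon$-homotopic, to the given ones by Proposition~\ref{tequil} — and chaining, a midpoint refinement of $\alpha_{T_1}$ is freely $\varepsilon$-homotopic to a midpoint refinement of $\alpha_{T_2}$ or $\overline{\alpha_{T_2}}$, which is exactly the definition of $T_1$ being equivalent to $T_2$.

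\textbf{Main obstacle.} The only non-bookkeeping point is the transitivity (and symmetry) of free $\varepsilon$-homotopy, and the compatibility between ``freely $\varepsilon$-homotopic'' and ``$\varepsilon$-homotopic'' — i.e.\ that an ordinary $\varepsilon$-homotopy between loops based at a common point implies free $\varepsilon$-homotopy. These are routine once one unwinds the definition $\alpha\ast\lambda_1\ast\overline\alpha$ being $\varepsilon$-homotopic to $\beta\ast\lambda_2\ast\overline\beta$: symmetry is immediate by swapping the roles of $\lambda_1,\lambda_2$; transitivity follows by concatenating the conjugating chains, using that $\varepsilon$-homotopy is preserved under concatenation (as noted right after Proposition~\ref{norm}); and the compatibility follows by taking $\alpha=\beta$ to be the basepoint chain. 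I would state these briefly as ``free $\varepsilon$-homotopy is an equivalence relation refining $\varepsilon$-homotopy of loops at a common basepoint (cf.\ \cite{BPUU})'' rather than belabor them, since the paper already treats free $\varepsilon$-homotopy as the correct discrete analog of continuous free homotopy. With that in hand the corollary is a pure chase through Proposition~\ref{tequil}.
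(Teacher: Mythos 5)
Your proof is correct and takes the same route the paper implicitly intends: the paper calls Corollary~\ref{triadee} an ``immediate consequence'' of Proposition~\ref{tequil} precisely because the first sentence of that proposition (any two $\varepsilon$-refinements of $\alpha_T$ are $\varepsilon$-homotopic) collapses the some/every distinction, and the rest is the routine bookkeeping — free $\varepsilon$-homotopy being an equivalence relation containing $\varepsilon$-homotopy of based loops — that you spell out. Your parenthetical checks (transitivity via conjugating chains, and $\alpha=\beta=\{x_0\}$ showing $\varepsilon$-homotopy implies free $\varepsilon$-homotopy) are exactly the details the paper leaves unstated, and you also correctly read through the evident typo $\alpha_{T_1}$ for $\alpha_{T_2}$ in the definition of triad equivalence.
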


\begin{proof}[Proof of Proposition \protect\ref{cleq}]
Note that by Corollary \ref{triadee} we may use any $\delta $-refinement in
the arguments that follow. Suppose that $T^{\prime }$ is a $\delta $-triad;
by the triangle inequality, $\delta <\frac{5}{3}\varepsilon $. Suppose first
that $\delta \geq \frac{4}{3}\varepsilon $. By the triangle inequality, $%
L(\{x_{0},x_{0}^{\prime },x_{1}^{\prime },x_{1},x_{0}\})<\frac{10}{%
\varepsilon }\varepsilon <3\delta $, and therefore any $\delta $-refinement
of this chain is $\delta $-null by Corollary \ref{three}. Since a similar
statement applies to the loops $\{x_{1},x_{1}^{\prime },x_{2}^{\prime
},x_{2},x_{1}\}$ and $\{x_{0},x_{2},x_{2}^{\prime },x_{0}^{\prime },x_{0}\}$%
, it follows that any $\delta $-refinement of $\alpha _{T^{\prime }}$ is
freely $\delta $-homotopic to a $\delta $-refinement of $\alpha _{T}$. Since 
$T$ is an essential $\varepsilon $-triad and $\varepsilon <\delta $, any
midpoint refinement of $\alpha _{T}$, and hence any midpoint refinement of $%
\alpha _{T^{\prime }}$, is $\delta $-null. That is, $T^{\prime }$ is not
essential.

Now suppose that $\delta <\frac{4}{3}\varepsilon $. By the triangle
inequality, $L(\{x_{0},x_{0}^{\prime },x_{1}^{\prime
},x_{1},x_{0}\})<3\varepsilon $ and therefore any $\varepsilon $-refinement
of this chain is $\varepsilon $-null by Corollary \ref{three}. Since a
similar statement applies to the loops $\{x_{1},x_{1}^{\prime
},x_{2}^{\prime },x_{2},x_{1}\}$ and $\{x_{0},x_{2},x_{2}^{\prime
},x_{0}^{\prime },x_{0}\}$, it follows that any $\varepsilon $-refinement of 
$\alpha _{T^{\prime }}$ is freely $\varepsilon $-homotopic to an $%
\varepsilon $-refinement of $\alpha _{T}$. Since no $\varepsilon $%
-refinement of $\alpha _{T}$ is $\varepsilon $-null, neither is any $%
\varepsilon $-refinement of $\alpha _{T^{\prime }}$. On the other hand, if $%
\sigma >\varepsilon $, $\alpha _{T}$ is $\sigma $-null and hence $\alpha
_{T^{\prime }}$ is also $\sigma $-null. Therefore if $T^{\prime }$ is an
essential triad then $T^{\prime }$ cannot be a $\sigma $-triad for any $%
\sigma >\varepsilon $. On the other hand, if $T^{\prime }$ were an essential 
$\sigma $-triad for some $\sigma <\varepsilon $ then any midpoint refinement
of $\alpha _{T^{\prime }}$ would have to be $\varepsilon $-null, a
contradiction.
\end{proof}

\begin{theorem}
\label{essential}Let $X$ be a geodesic space, $\varepsilon >0$, $%
L=3\varepsilon $ and $c:[0,L]\rightarrow X$ be arclength parameterized. If
the image of $c$ is an essential $\varepsilon $-circle $C$ then $c$ is not
null-homotopic and $C$ is metrically embedded.
\end{theorem}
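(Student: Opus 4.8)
The plan is to prove the two assertions separately, using the equivalence between essential $\varepsilon$-circles and essential $\varepsilon$-triads established in Proposition \ref{tequil}, together with the lifting machinery of Section 2.

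\textbf{Step 1: $c$ is not null-homotopic.} Suppose for contradiction that $c$ is null-homotopic. Since $C$ is an essential $\varepsilon$-circle, it contains an $\varepsilon$-loop $\alpha$ that is not $\varepsilon$-null; by Proposition \ref{cireq}, an arclength parameterization of $C$ is not $\varepsilon$-null, so a subdivision $\varepsilon$-chain of $c$ is not $\varepsilon$-null. But a subdivision $\varepsilon$-chain of $c$ is a stringing of itself with $c$ as the connecting path, so if $c$ is null-homotopic then by Corollary \ref{homimp} this chain is $\varepsilon$-homotopic to the trivial loop, i.e. it \emph{is} $\varepsilon$-null — a contradiction. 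Hence $c$ is not null-homotopic. (One must be slightly careful about basepoints in invoking Corollary \ref{homimp}, but this is routine: take the basepoint at $c(0)$.)

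\textbf{Step 2: $C$ is metrically embedded.} This is the substantive part. Let $d$ denote the metric of $X$ restricted to $C$, and let $\rho$ denote the intrinsic metric of the circle of circumference $3\varepsilon$; clearly $d \le \rho$ on $C$, so I must show $d(c(s),c(t)) \ge \rho(c(s),c(t))$ for all $s,t$. Equivalently, writing $\ell = |s-t|$ with $0 \le \ell \le 3\varepsilon/2$ (the shorter arc), I must show $d(c(s),c(t)) \ge \ell$. Suppose not: then some two points $p = c(s)$, $q = c(t)$ on $C$, at arc-distance $\ell \le 3\varepsilon/2$, satisfy $d(p,q) < \ell$. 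The idea is that the shorter arc of $C$ from $p$ to $q$, together with a geodesic from $q$ back to $p$, forms a loop of length $< 2\ell \le 3\varepsilon$, hence is $\varepsilon$-null by Lemma \ref{lennull} (after passing to an arclength parameterization of the concatenation, or by Corollary \ref{three} applied to a suitable refinement). This $\varepsilon$-nullity lets me replace the shorter arc of $C$ by the geodesic, within a suitable $\varepsilon$-homotopy, and conclude that any subdivision $\varepsilon$-chain of $c$ is $\varepsilon$-homotopic to one supported on the complementary (longer) arc closed up by the geodesic. I then want to iterate or otherwise reduce this to show that a subdivision chain of $c$ is $\varepsilon$-null, contradicting that $C$ is essential.

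The cleanest route for the replacement is probably to invoke Proposition \ref{tequil} and Proposition \ref{cleq}: pick an $\varepsilon$-triad $T = \{c(0), c(\varepsilon), c(2\varepsilon)\}$ on $C$, which is essential by the converse direction already available (an essential $\varepsilon$-circle has every triad on it essential — this is stated in the introduction as Corollary \ref{tcequiv}, though I would verify it directly here via Proposition \ref{cireq}); then if $d(c(s),c(t)) < \ell$ one can perturb one vertex of $T$ to get a triad $T'$ that is \emph{not} an $\varepsilon$-triad — its side lengths drop below $\varepsilon$ — yet still "should" be essential, and use the resulting short loop to kill the class. Alternatively, the direct argument: from $d(p,q) < \ell$, form the loop (arc from $p$ to $q$ of length $\ell$) $\ast$ (geodesic from $q$ to $p$), of length $< 3\varepsilon$, which is $\varepsilon$-null; splice this nullhomotopy into a subdivision $\varepsilon$-chain of $c$ to replace the short arc by the geodesic, obtaining an $\varepsilon$-loop $\alpha'$ that is $\varepsilon$-homotopic to a subdivision chain of $c$ and whose carrier has length $< (3\varepsilon - \ell) + d(p,q) < 3\varepsilon$; then $\alpha'$ is $\varepsilon$-null by Corollary \ref{three}, so $c$ is $\varepsilon$-null, contradicting essentiality.

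\textbf{Main obstacle.} The delicate point is the length bookkeeping and the verification that the splicing is a legitimate $\varepsilon$-homotopy: after replacing the short arc by the geodesic we have a loop built from one long arc (length $3\varepsilon - \ell$) and one geodesic (length $d(p,q) < \ell$), total length $< 3\varepsilon$, and one must confirm that \emph{some} subdivision $\varepsilon$-chain of this loop is reached from a subdivision $\varepsilon$-chain of $c$ by basic moves — this follows from Corollary \ref{homimp} once we know the two stringings are path-homotopic in $X$, which they are because the bigon bounded by the short arc and the geodesic lies in a ball of radius $< 3\varepsilon/2$... except that $3\varepsilon/2$ need not be less than $\varepsilon$, so I cannot simply say "it lifts as a loop." The honest fix is: the bigon has length $< 3\varepsilon$, so by Lemma \ref{lennull} its subdivision $\varepsilon$-chain is $\varepsilon$-null, and hence by Corollary \ref{homimp} (used in reverse) the short arc and the geodesic have $\varepsilon$-homotopic subdivision chains — \emph{this} is what licenses the splice. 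So the real content is organizing these invocations of Lemma \ref{lennull}/Corollary \ref{three} and Corollary \ref{homimp} in the right order; the geometry itself is transparent.
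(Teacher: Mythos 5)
Your proposal is correct, and its Step 2 takes a somewhat different route from the paper's, though both ultimately rest on the same key fact (Corollary \ref{three}: an $\varepsilon$-loop of length $<3\varepsilon$ is $\varepsilon$-null). The paper specializes to the antipodal chord: it takes the four-point subdivision chain $\alpha=\{x_0,x_1,x_2,x_3,x_0\}$ with $x_i=c(iL/4)$, inserts a midpoint $m$ of a geodesic from $x_1$ to $x_3$, excises the $\varepsilon$-null loop $\{x_1,x_2,x_3,m,x_1\}$, and shows that if $d(x_1,x_3)<L/2$ then $\alpha$ is $\varepsilon$-null, contradicting Proposition \ref{cireq}; this proves $c|_{[L/4,3L/4]}$ is a geodesic, and the full chord-length formula then follows by ``shifting'' the parameterization. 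You instead handle an arbitrary chord $\{p,q\}$ at arc-distance $\ell\le 3\varepsilon/2$ directly by splicing; both are valid, the paper's being computationally lighter (one fixed four-point chain), yours avoiding the reparameterization step. The one thing to repair in your write-up is the invocation of ``Corollary \ref{homimp} used in reverse,'' which is not a theorem and is also unnecessary: once the bigon chain $\alpha\ast\overline{\gamma}$ (a subdivision $\varepsilon$-chain of the short arc concatenated with a reversed subdivision chain of a geodesic from $p$ to $q$) is seen to have length $\le\ell+d(p,q)<2\ell\le 3\varepsilon$, it is $\varepsilon$-null by Corollary \ref{three}, so $[\alpha]_\varepsilon=[\gamma]_\varepsilon$ by chain algebra alone, hence $[\alpha\ast\beta]_\varepsilon=[\gamma\ast\beta]_\varepsilon$ where $\beta$ is a subdivision chain of the long arc; the loop $\gamma\ast\beta$ has length $<(3\varepsilon-\ell)+\ell=3\varepsilon$, so it and therefore $\alpha\ast\beta$ are $\varepsilon$-null, contradicting Proposition \ref{cireq} (Lemma \ref{rotate} disposes of the dependence on the starting point of the parameterization). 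Your digression about path-homotopic stringings and balls of radius $3\varepsilon/2$ is a red herring; Corollary \ref{homimp} plays no role in Step 2.
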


\begin{proof}
That $c$ is not null-homotopic is immediate from Corollary \ref{homimp}. For
the second part we will start by showing that the restriction of $c$ to the
interval $[\frac{L}{4},\frac{3L}{4}]$ is a geodesic, hence a metric
embedding. If not then $d(c\left( \frac{L}{4}\right) ,c\left( \frac{3L}{4}%
\right) )<\frac{L}{2}$. We will get a contradiction to Proposition \ref%
{cireq} by proving that the $\varepsilon $-loop $\alpha
=\{x_{0},x_{1},x_{2},x_{3},x_{0}\}$ for the subdivision $\{0,\frac{L}{4},%
\frac{L}{2},\frac{3L}{4},L\}$ is $\varepsilon $-null. Let $m$ be a midpoint
between $x_{1}$ and $x_{3}$. By our assumption (and since $c$ is arclength
parameterized), $\xi :=\{x_{1},x_{2},x_{3},m,x_{1}\}$ is an $\varepsilon $%
-chain and has length strictly less than $L$ and hence by Corollary \ref%
{three}, is $\varepsilon $-null. By adding points one at a time we have $%
\alpha $ is $\varepsilon $-homotopic to $\left\{
x_{0},x_{1},x_{2},x_{3},m,x_{3},x_{0}\right\} $, which is $\varepsilon $%
-homotopic to 
\begin{equation*}
\{x_{0},x_{1},x_{2},x_{3},m,x_{1},m,x_{3},x_{0}\}=\{x_{0},x_{1}\}\ast \xi
\ast \{x_{1},m,x_{3},x_{0}\}
\end{equation*}%
which is $\varepsilon $-homotopic to $\beta =\{x_{0},x_{1},m,x_{3},x_{0}\}$.
But once again, since $d(x_{1},x_{3})<\frac{L}{2}$, $\beta $ is $\varepsilon 
$-null.

Now for any $s_{0}\in \lbrack 0,L]$ we may \textquotedblleft
shift\textquotedblright\ the parameterization of $c$ to a new curve $%
c_{s_{0}}:[0,L]\rightarrow X$ that is the unique arclength monotone
reparameterization of the concatenation $c\mid _{\lbrack s_{0},L]}\ast c\mid
_{\lbrack 0,s_{0}]}$. Applying the above argument for arbitrary $s_{0}$ we
obtain the following. For every $x=c(s),y=c(t)\in C$, with $s<t$, $d(x,y)$
is the mimimum of the lengths of the two curves $c\mid _{\lbrack s,t]}$ and $%
c\mid _{\lbrack t,L]}\ast c\mid _{\lbrack 0,s]}$.

Define $r:=\frac{L}{2\pi }$, and let $K$ be the standard Euclidean circle of
radius $r$ (with the geodesic metric). Now we may define $f:C\rightarrow K$
by $f(c(t))=(r\cos \frac{t}{r},r\sin \frac{t}{r})$. Given that $c$ is
arclength parameterized, and what we proved above, it is straightforward to
check that $f$ is a well-defined isometry.
\end{proof}

\begin{corollary}
\label{tcequiv}Every $\varepsilon $-triad on an essential $\varepsilon $%
-circle is essential. Moreover, if $C_{1},C_{2}$ are essential $\varepsilon $%
-circles in a geodesic space then the following are equivalent:

\begin{enumerate}
\item $C_{1}$ and $C_{2}$ have arclength parameterizations with subdivision $%
\varepsilon $-chains that are freely $\varepsilon $-homotopic.

\item For some triads $T_{i}$ on $C_{i}$, $T_{1}$ is equivalent to $T_{2}$.

\item For any triads $T_{i}$ on $C_{i}$, $T_{1}$ is equivalent to $T_{2}$.

\item For any arclength parameterizations $c_{i}$ of $C_{i}$, any
subdivision $\varepsilon $-chain of $c_{1}$ is freely $\varepsilon $%
-homotopic to any subdivision $\varepsilon $-chain of either $c_{2}$ or $%
\overline{c_{2}}$.
\end{enumerate}
\end{corollary}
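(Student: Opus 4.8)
\textbf{Proof proposal for Corollary \ref{tcequiv}.}

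The plan is to reduce every statement to the level of triads and their midpoint refinements, where Proposition \ref{tequil}, Corollary \ref{triadee}, and Proposition \ref{cleq} do all the work. First I would establish the opening assertion that every $\varepsilon$-triad $T$ on an essential $\varepsilon$-circle $C$ is essential. Pick an arclength parameterization $c:[0,3\varepsilon]\to X$ of $C$; by Proposition \ref{cireq}, $c$ is not $\varepsilon$-null. Given a triad $T=\{x_0,x_1,x_2\}$ on $C$, the three points cut $C$ into three arcs, and since $T$ consists of points at mutual distance $\varepsilon$ on a metrically embedded circle of circumference $3\varepsilon$ (Theorem \ref{essential}), each arc has length exactly $\varepsilon$; hence a midpoint refinement of $\alpha_T$ is literally an $\varepsilon$-subdivision chain of $c$ (after a shift of basepoint), which is not $\varepsilon$-null. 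So $T$ is essential by definition. (I should check the edge case where the three arc-lengths are only forced to be $\varepsilon$ because the chord distances are $\varepsilon$ and the circle is metrically embedded — this is exactly what Theorem \ref{essential} gives, so there is no gap.)

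For the cyclic equivalence of (1)--(4) I would argue $(4)\Rightarrow(1)\Rightarrow(2)\Rightarrow(3)\Rightarrow(4)$. The implication $(4)\Rightarrow(1)$ is trivial (specialize ``any'' to ``some''). For $(1)\Rightarrow(2)$: given arclength parameterizations $c_i$ of $C_i$ whose subdivision $\varepsilon$-chains are freely $\varepsilon$-homotopic, choose any triad $T_1$ on $C_1$; its midpoint refinement is a subdivision $\varepsilon$-chain of $c_1$ by the arc-length-$\varepsilon$ observation above, and it is freely $\varepsilon$-homotopic to a subdivision $\varepsilon$-chain of $c_2$, which in turn (again by the metric embedding) is a midpoint refinement of $\alpha_{T_2}$ for the triad $T_2$ on $C_2$ consisting of the three relevant subdivision points. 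Then Corollary \ref{triadee}(3)$\Rightarrow$(1) gives $T_1\sim T_2$. The implication $(2)\Rightarrow(3)$ is where Proposition \ref{cleq} enters: if $T_1$ is equivalent to \emph{some} triad $T_2$ on $C_2$, and $T_2'$ is \emph{any} other triad on $C_2$, I want $T_1\sim T_2'$, so by transitivity of $\sim$ it suffices to show $T_2\sim T_2'$ for any two triads on the same essential $\varepsilon$-circle; one can ``slide'' one triad to the other through finitely many triads each close to the previous one (within $\varepsilon/3$), invoking Proposition \ref{cleq} at each step, using that each intermediate triple of points on $C$ is itself an essential triad by the first part of this corollary. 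Finally $(3)\Rightarrow(4)$: given any arclength parameterizations $c_i$ and any subdivision $\varepsilon$-chains, these chains are midpoint-type refinements of $\alpha_{T_i}$ for the triads $T_i$ cut out on $C_i$; by (3) $T_1\sim T_2$ (or $T_1\sim \overline{T_2}$, corresponding to the two orientations/parameterizations of $C_2$), and Corollary \ref{triadee}(1)$\Rightarrow$(2) upgrades this to: \emph{every} $\varepsilon$-refinement of $\alpha_{T_1}$ is freely $\varepsilon$-homotopic to \emph{every} $\varepsilon$-refinement of $\alpha_{T_2}$ or $\overline{\alpha_{T_2}}$, which is exactly (4).

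The main obstacle I anticipate is the ``sliding'' argument inside $(2)\Rightarrow(3)$: I must connect two arbitrary triads on an essential $\varepsilon$-circle through a chain of triads, consecutive ones within distance $\varepsilon/3$, all of which are essential. Since the circle is isometric to a standard circle of circumference $3\varepsilon$ (Theorem \ref{essential}), the inscribed equilateral triads are parameterized by rotation, and one can rotate in small enough increments; each intermediate triple has mutual distances exactly $\varepsilon$ (so is a genuine $\varepsilon$-triad) and lies on $C$, hence is essential by the first part of the corollary. Then Proposition \ref{cleq} applies verbatim at each step. A secondary point to be careful about is bookkeeping of the two orientations of $C_2$ — I should note once and for all that reversing the arclength parameterization of $C_2$ replaces $T_2$ by $\overline{T_2}$ in the sense of the equivalence relation on triads, so the ``$\alpha_{T_2}$ or $\overline{\alpha_{T_2}}$'' alternative in (4) matches the ``$c_2$ or $\overline{c_2}$'' alternative, and no further case analysis is needed.
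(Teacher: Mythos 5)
Your overall structure parallels the paper's, but you take a genuinely different route at the central sublemma, namely that any two $\varepsilon$-triads $T$ and $T'$ on the same essential $\varepsilon$-circle $C$ are equivalent. You propose to ``slide'' one inscribed equilateral triad to the other through a short chain of nearby inscribed equilateral triads, invoking Proposition \ref{cleq} at each step (each intermediate triad being essential by the already-proved opening assertion) and then appealing to transitivity of triad-equivalence. The paper's proof instead reorders $T$ so that the vertices of $T$ and $T'$ interleave around $C$, notes that $\{x_0,x_0',x_1,x_1',x_2,x_2',x_0\}$ is a single $\varepsilon$-refinement of $\alpha_T$, and applies Lemma \ref{rotate} once to cyclically shift it into an $\varepsilon$-refinement of $\alpha_{T'}$ — no Proposition \ref{cleq}, no intermediate triads, no transitivity. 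The paper's interleaving is shorter and self-contained; your sliding is more robust in that it only needs local closeness rather than an exact interleaving, but it silently invokes transitivity of the equivalence relation, which the paper never records (it does hold, since equivalence amounts to conjugacy-up-to-inversion in $\pi_\varepsilon(X)$, but you should say so).

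Two smaller points. First, in both $(1)\Rightarrow(2)$ and $(3)\Rightarrow(4)$ you treat a given subdivision $\varepsilon$-chain of $c_i$ as if it were a midpoint refinement of some $\alpha_{T_i}$; it need not pass through $c_i(\varepsilon)$ and $c_i(2\varepsilon)$ and may have more than six points. The paper instead fixes $T_i=\{c_i(0),c_i(\varepsilon),c_i(2\varepsilon)\}$, notes that a midpoint refinement of $\alpha_{T_i}$ using midpoints on $C_i$ is itself a subdivision $\varepsilon$-chain of $c_i$, and then uses that any two subdivision $\varepsilon$-chains of $c_i$ are $\varepsilon$-homotopic via a common refinement; you should insert this bridging step. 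Second, in the opening assertion you should explicitly observe (as the paper does) that the three arcs of $C$ between the points of $T$, having length $\varepsilon=d(x_i,x_{i+1})$, are geodesics, so that their midpoints lie on $C$ and give a midpoint refinement of $\alpha_T$ that is simultaneously a subdivision $\varepsilon$-chain of a parameterization of $C$; ``a midpoint refinement'' in general need not have its midpoints on $C$.
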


\begin{proof}
A triad $T$ on $C$ must be an $\varepsilon $-triad since by Theorem \ref%
{essential}, $C$ is metrically embedded--in fact from the same theorem it
follows that the segments of $C$ between the points of $T$ must be
geodesics. Therefore the midpoints of these geodesics give a midpoint
refinement of $\alpha _{T}$ that is also an $\varepsilon $-subdivision of a
parameterization of $C$, and hence is not $\varepsilon $-null. That is, $T$
is essential.

We next show that any two triads $T=\{x_{0},x_{1},x_{2}\}$ and $T^{\prime
}=\{x_{0}^{\prime },x_{1}^{\prime },x_{2}^{\prime }\}$ on $C$ are
equivalent. First note that $T$ is equivalent to any reordering of its
points. In fact, any reordering may be obtained by a cyclic permutation
(which is covered by Lemma \ref{rotate} applied to any midpoint refinement
of $\alpha _{T}$) and/or a swap of $x_{1}$ and $x_{2}$ (which by definition
doesn't affect equivalence since it simply reverses $\alpha _{T}$). Now
applying some reordering of $T$ we may suppose that the points are arranged
around the circle in the following order: $\{x_{0},x_{0}^{\prime
},x_{1},x_{1}^{\prime },x_{2},x_{2}^{\prime },x_{0}\}$, which is an $%
\varepsilon $-refinement of $\alpha _{T}$. By Lemma \ref{rotate}, this $%
\varepsilon $-chain is freely $\varepsilon $-homotopic to $\{x_{0}^{\prime
},x_{1},x_{1}^{\prime },x_{2},x_{2}^{\prime },x_{0},x_{0}^{\prime }\}$,
which is an $\varepsilon $-refinement of $\alpha _{T^{\prime }}$. So the
first part of the corollary is finished by Corollary \ref{triadee}.

$1\Rightarrow 2$. Choose arclength parameterizations $c_{i}$ of $C_{i}$ with
subdivision $\varepsilon $-chains $\lambda _{i}$ starting at points $z_{i}$
that are freely $\varepsilon $-homotopic. Choose one of the two triads, call
it $T_{i}$, in each $C_{i}$ starting at $z_{i}$, that is also a subdivision
chain of $c_{i}$. By the comments after Definition \ref{refinement} we see
that $\lambda _{i}$ and the midpoint refinement of $\alpha _{T_{i}}$ on $%
C_{i}$ are $\varepsilon $-homotopic. Hence midpoint refinements of $\alpha
_{T_{1}}$ and $\alpha _{T_{2}}$ are freely $\varepsilon $-homotopic, so $%
T_{1}$ is equivalent to $T_{2}$. $2\Rightarrow 3$ is an immediate
consequence of the first part of this corollary. $3\Rightarrow 4$. Consider
the triads $T_{i}=\{c_{i}(0),c_{i}(\varepsilon ),c_{i}(2\varepsilon )\}$. By
reversing one of the parameterizations, if necessary, we may suppose that $%
T_{1}$ is freely $\varepsilon $-homotopic to $T_{2}$. But then midpoint
refinements of $T_{i}$ are subdivision $\varepsilon $-chains of $c_{i}$ that
are freely $\varepsilon $-homotopic. $4\Rightarrow 1$ simply follows from
the definition.
\end{proof}

\begin{definition}
\label{circledef}An essential $\varepsilon $-circle $C_{1}$ and an essential 
$\delta $-circle $C_{2}$ are said to be equivalent if $\varepsilon =\delta $
and the four equivalent conditions in the previous corollary hold. When $%
\varepsilon $ is not determined we will just refer to essential circles.
\end{definition}

\begin{proof}[Proof of Theorem \protect\ref{main1}]
If there is an essential $\varepsilon $-circle $C$ then there is an
arclength parameterization $c:[0,3\varepsilon ]\rightarrow C$. Since $c$ is
not $\varepsilon $-null, by definition a subdivision of $[0,3\varepsilon ]$
into fourths results in an $\varepsilon $-loop $\alpha $ that is not $%
\varepsilon $-null. But for any $\delta >\varepsilon $, Lemma \ref{lennull}
(applied to $\delta $) shows that $\alpha $ must be $\delta $-null for all $%
\delta >\varepsilon $ and hence has $\varepsilon $ as its critical value.

For the converse, suppose that $\lambda $ is $\varepsilon $-critical. We
will start by showing that for all $\varepsilon <\delta <2\varepsilon $
there is a midpoint refinement of a $\delta $-small loop that is not $%
\varepsilon $-null. In fact, since $\lambda $ is $\varepsilon $-critical, it
is $\delta $-null and therefore by Proposition \ref{deltaimp} can be written
as a product of midpoint refinements of $\delta $-small loops. If all of
these loops were $\varepsilon $-null, then $\lambda $ would also be $%
\varepsilon $-null, a contradiction. Now for every $i$ we may find $\left(
\varepsilon +\frac{1}{i}\right) $-small loops $\lambda _{i}=\mu _{i}\ast
\{x_{i},y_{i},z_{i},x_{i}\}\ast \overline{\mu _{i}}$ such that midpoint
subdivisions $\theta _{i}=\{x_{i},m_{i},y_{i},n_{i},z_{i},p_{i},x_{i}\}$ are
not $\varepsilon $-null. By choosing a subsequence if necessary, we may
suppose that all six sequences converge to a limiting midpoint subdivision
chain $\mu =\{x,m,y,n,z,p,x\}$ of length at most $3\varepsilon $. But
according to Proposition \ref{close}, for large enough $i$, $\mu $ is $%
\varepsilon $-homotopic to $\mu _{i}$, which means that $\mu $ is not $%
\varepsilon $-null. This means that the chain $\{x,y,z,x\}$ must have length
equal to $3\varepsilon $. Since $d(x,y),d(y,z),d(x,z)\leq \varepsilon $ it
follows that $\{x,y,z\}$ is a triad and hence is essential. By Proposition %
\ref{tequil}, any geodesic triangle having corners $\{x,y,z\}$ is an
essential $\varepsilon $-circle.
\end{proof}

\begin{corollary}
\label{systole}Suppose $X$ is a compact geodesic space with $1$-systole $%
\sigma _{1}$. Then

\begin{enumerate}
\item $\frac{\sigma _{1}}{3}$ is a lower bound for the homotopy critical
spectrum of $X$.

\item If $X$ is semilocally simply connected and not simply connected then $%
\sigma _{1}>0$ and $\varepsilon :=\frac{\sigma _{1}}{3}$ is the smallest
homotopy critical value of $X$.
\end{enumerate}
\end{corollary}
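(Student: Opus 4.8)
The plan is to deduce both parts from Theorem \ref{main1} together with Theorem \ref{essential}. For the first part, suppose $\varepsilon$ is a homotopy critical value of $X$. By Theorem \ref{main1} there is an essential $\varepsilon$-circle $C$, and by Theorem \ref{essential} $C$ is the image of an arclength-parameterized loop of length $3\varepsilon$ that is not null-homotopic, and $C$ is metrically embedded. Since $C$ is metrically embedded and isometric to a round circle of circumference $3\varepsilon$, every sufficiently short subarc of this parameterization is length-minimizing in $X$ (a subarc of length $\ell \le \tfrac{3\varepsilon}{2}$ realizes the distance between its endpoints, by the distance formula established in the proof of Theorem \ref{essential}); hence the parameterizing loop is a closed geodesic in the sense defined in Section 2. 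Being a non-null closed geodesic of length $3\varepsilon$, its length is at least the $1$-systole $\sigma_1$, so $3\varepsilon \ge \sigma_1$, i.e. $\varepsilon \ge \tfrac{\sigma_1}{3}$. This proves that $\tfrac{\sigma_1}{3}$ is a lower bound for the homotopy critical spectrum.

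For the second part, assume $X$ is semilocally simply connected and not simply connected. By Theorem \ref{slsc} the homotopy critical spectrum has a positive lower bound, and by the Remark following Theorem \ref{slsc} the spectrum is nonempty (if it were empty, $X$ would be simply connected, contrary to assumption); combined with Theorem \ref{ghbound} (discreteness of the spectrum in $(0,\infty)$) there is in fact a smallest homotopy critical value, call it $\varepsilon_0$. By part (1), $\varepsilon_0 \ge \tfrac{\sigma_1}{3}$; in particular $\sigma_1 > 0$ once we know $\varepsilon_0 > 0$, which we do. It remains to prove the reverse inequality $\varepsilon_0 \le \tfrac{\sigma_1}{3}$, equivalently $3\varepsilon_0 \le \sigma_1$, i.e. that there is no non-null closed geodesic shorter than $3\varepsilon_0$. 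Since $\varepsilon_0$ is critical, by Theorem \ref{main1} there is an essential $\varepsilon_0$-circle, which by Theorem \ref{essential} is a non-null closed geodesic of length exactly $3\varepsilon_0$; this already gives $\sigma_1 \le 3\varepsilon_0$. Conversely, suppose $\gamma$ is any non-null closed geodesic of length $\ell$. Using that $X$ is semilocally simply connected, Theorem \ref{slsc}(3)--(4) guarantees that $\gamma$ is shortest in its free homotopy class, so $|[\gamma]|=\ell$; applying $\Lambda$ (length-preserving by Theorem \ref{slsc}(2) for the relevant covering parameter) produces a non-trivial discrete loop, and Corollary \ref{three} shows that a loop of length less than $3\delta$ is $\delta$-null for every $\delta$, so $\gamma$ cannot be $\delta$-critical for $\delta > \ell/3$; tracking which $\delta$ can be critical, one finds the associated critical value is at most $\ell/3$, whence $\varepsilon_0 \le \ell/3$, i.e. $\ell \ge 3\varepsilon_0$. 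Taking the infimum over non-null closed geodesics gives $\sigma_1 \ge 3\varepsilon_0$, and combining, $\sigma_1 = 3\varepsilon_0$, so $\varepsilon_0 = \tfrac{\sigma_1}{3}$ is the smallest homotopy critical value.

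The main obstacle I anticipate is the direction $\varepsilon_0 \le \tfrac{\sigma_1}{3}$: one must be careful that an arbitrary non-null closed geodesic $\gamma$ of length $\ell$ actually \emph{forces} the existence of a homotopy critical value $\le \ell/3$, rather than merely failing to be critical at scales above $\ell/3$. The clean way is: since $\gamma$ is non-null, $\Lambda([\gamma])$ is a non-trivial element of $\pi_\varepsilon(X)$ for all small $\varepsilon$ (by Theorem \ref{slsc}, $\Lambda$ is injective there), so $\gamma$ is not $\varepsilon$-null for small $\varepsilon$; but by Corollary \ref{three}, $\gamma$ (having length $\ell$) \emph{is} $\delta$-null for every $\delta > \ell/3$. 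Hence $\sigma := \sup\{\tau : [\gamma]_\tau \ne [\ast]_\tau\}$ satisfies $0 < \sigma \le \ell/3$, and the argument of Lemma \ref{nocrit} shows $\sigma$ is itself a homotopy critical value. Therefore $\varepsilon_0 \le \sigma \le \ell/3$, as needed. Everything else is bookkeeping with the results already proved; no genuinely new construction is required.
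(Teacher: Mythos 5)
Your overall strategy matches the paper's: derive part (1) from Theorems \ref{main1} and \ref{essential}, and for part (2) identify the smallest homotopy critical value $\varepsilon _{0}$ with $\sigma _{1}/3$. Part (1) is fine. In the second paragraph, however, two statements are wrong, though your final paragraph rescues the proof. First, ``by part (1), $\varepsilon _{0}\geq \sigma _{1}/3$; in particular $\sigma _{1}>0$'' is a non-sequitur: part (1) gives $\sigma _{1}\leq 3\varepsilon _{0}$, which says nothing about $\sigma _{1}$ being positive. (The paper establishes $\sigma _{1}>0$ by a separate Lebesgue-number argument from semilocal simple connectivity; your last paragraph instead obtains it as a byproduct of $\sigma _{1}=3\varepsilon _{0}$, which is fine, but the inference as first stated is invalid.) Second, ``Theorem \ref{slsc}(3)--(4) guarantees that $\gamma $ is shortest in its free homotopy class'' is false---\ref{slsc}(4) says the shortest loop in a class is a closed geodesic, not that every closed geodesic is shortest---but this claim turns out to be unused in the argument that actually works.

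The argument you give in the final paragraph is essentially correct, and in one respect it is cleaner than the paper's: the paper's proof of $3\varepsilon \leq \sigma _{1}$ supposes the existence of a non-null closed geodesic of length \emph{exactly} $\sigma _{1}$, which does not follow immediately from the definition of the $1$-systole as an infimum; you instead show directly that every non-null closed geodesic has length $\geq 3\varepsilon _{0}$, which sidesteps that issue and simultaneously yields $\sigma _{1}>0$. One technical point worth tightening: you set $\sigma :=\sup \{\tau :[\gamma ]_{\tau }\neq [\ast ]_{\tau }\}$ and invoke the argument of Lemma \ref{nocrit}, but that lemma is stated for a single fixed discrete loop, whereas here the $\tau $-subdivision chain of $\gamma $ changes with $\tau $. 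Fix instead a $\delta _{0}$-subdivision $\lambda $ of $\gamma $ for some small $\delta _{0}>0$; for every $\tau \geq \delta _{0}$, $\lambda $ is $\tau $-homotopic to any $\tau $-subdivision of $\gamma $ via a common refinement, so the supremum is unchanged and the Lemma \ref{nocrit} argument applies verbatim to $\lambda $.
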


\begin{proof}
Every parameterized essential circle is a closed geodesic that is not
null-homotopic by Theorem \ref{essential}; the first part is immediate. If $%
X $ is semilocally simply connected and not simply connected, Theorem \ref%
{slsc} implies that for some $\varepsilon >0$, $\phi _{\varepsilon
}:X_{\varepsilon }\rightarrow X$ is the simply connected covering map of $X$
and $\varepsilon $ is the smallest homotopy critical value of $X$. By
Theorem \ref{main1}, $X$ contains an essential circle, which is the image of
a closed geodesic $\gamma $ of length $3\varepsilon $. If $\gamma $ were
null-homotopic then $\gamma $ would lift as a loop, contradicting
Proposition \ref{ender} and the fact that any subdivision $\varepsilon $%
-chain of it is not $\varepsilon $-null. This implies that $\sigma _{1}\leq
3\varepsilon $. Now $X$ can be covered by open sets with the property that
every loop in the set is null-homotopic in $X$. Therefore any loop of
diameter smaller then the Lebesgue number of this cover is by definition
contained in a set in the cover, hence null-homotopic, which implies $\sigma
_{1}>0$. Now suppose that $\delta :=\frac{\sigma _{1}}{3}<\varepsilon $. If $%
\gamma $ were a non-null homotopic closed geodesic of length $\sigma _{1}$,
then $\gamma $ could not lift as a loop to to the simply connected space $%
X_{\varepsilon }$. Hence by Proposition \ref{ender}, any $\varepsilon $%
-subdivision chain $\alpha $ of $\gamma $ has the property that $[\alpha
]_{\varepsilon }\neq \lbrack \ast ]_{\varepsilon }$. This contradicts
Corollary \ref{three}.
\end{proof}

\begin{example}
\label{torex}Let $Y$ denote the flat torus obtained by identifying the sides
of a rectangle of dimensions $0<3a\leq 3b$. When $a<b$, $a$ and $b$ are
distinct homotopy critical values: For $\varepsilon >b$, $Y_{\varepsilon }=Y$%
, for $a<\varepsilon \leq b$, $Y_{\varepsilon }$ is a flat metric cylinder
over a circle of length $3a$, and for $\varepsilon \leq a$, $Y_{\varepsilon
} $ is the plane. There are infinitely many essential $a$-circles and $b$%
-circles, but all essential $a$-circles are equivalent and all essential $b$%
-circles are equivalent (Corollary \ref{homimp}). When $a=b$, $a$ is the
only homotopy critical value; both circles \textquotedblleft
unroll\textquotedblright\ simultaneously and the covers go directly from
trivial to universal. There are still two equivalence classes of essential
circles, but since the circles have the same length, $a$ is a homotopy
critical value of multiplicity $2$. Now fix $a=b=\frac{1}{3}$ (i.e. $Y$
comes from a unit square). The closed geodesic determined by a straight path
starting at the bottom left corner of the square having a slope of $\frac{1}{%
2}$ is a Riemannian isometric embedding of a circle of length $\sqrt{5}$,
which is the shortest path in its homotopy class. However, the distance
between the images of any two antipodal points is only $\frac{1}{2}$, so
this closed geodesic is not metrically embedded, hence not an essential
circle. The diagonal of the square produces an $\varepsilon $-circle $C$
with $\varepsilon =\frac{\sqrt{2}}{3}$, which is the shortest path in its
homotopy class, is metrically embedded and not null-homotopic, but is not
essential. In fact, $C$ can be homotoped to the concatenation of the two
circles of which the torus is a product. Hence any $\varepsilon $-loop $%
\lambda $ on $C$ can be $\varepsilon $-homotoped to a loop $\lambda ^{\prime
}$ in those circles. But each of these circles is not $\varepsilon $%
-essential ($\varepsilon =\frac{\sqrt{2}}{3}>\frac{1}{3}$) so $\lambda
^{\prime }$, hence $\lambda $, is $\varepsilon $-null.

Note that if one adds a thin handle to the torus it will obstruct standard
homotopies between some essential circles, but not $\varepsilon $%
-homotopies. This shows that using traditional homotopies rather than $%
\varepsilon $-homotopies in the definition of equivalence can
\textquotedblleft overcount\textquotedblright\ multiplicity. In \cite{SW2},
the multiplicity of a number $\delta $ in the covering spectrum is defined
for compact spaces with a universal cover (in the categorial sense, not
necessarily simply connected) as the minimum number of generators of a
certain type in a certain subgroup of the \textquotedblleft revised
fundamental group\textquotedblright\ (Definition 6.1). We will not recall
the definition of these groups here because they require a universal cover
and this assumption is unneccessary for our work.
\end{example}

\begin{example}
\label{empty}We will now recall the construction of a space $V$ that is
known to contain a path loop $L$ that is homotopic to arbitrarily small
loops but is not null-homotopic (see \cite{cc} or \cite{Z}), giving it a
geodesic metric in the process. The Hawiian Earring $H$ consists of all
circles of radius $\frac{1}{i}$ in the plane centered at $(0,\frac{1}{i})$, $%
i\in \mathbb{N}$, with the subspace topology. The induced geodesic metric on 
$H$ measures the distance between any two points in $H$ as the length of the
shortest path in $H$ joining them. It is easy to check that this metric is
compatible with the subspace topology. Now take the cone on $H$, which also
has a geodesic metric compatible with the topology of the cone (see, for
example, the survey article \cite{PS} for details about geodesic metrics on
glued spaces and cones). Glue two copies of this space together at the point 
$(0,0)$ in $H$. One can check that every $\varepsilon $-loop is $\varepsilon 
$-null for every $\varepsilon $, so the homotopy critical spectrum is empty
even though the space is not simply connected. This example is related to
Corollary \ref{systole} in the following way: one wonders if the requirement
that $X$ be semilocally simply connected in the second part is required. If
the path loop $L$ mentioned above had a closed geodesic in its homotopy
class then we would have a counterexample to the second part of Corollary %
\ref{systole} with the weaker hypothesis. However, such a thing is not
guaranteed--see Remark \ref{gromov}.
\end{example}

\begin{example}
\label{hw}Let $X_{n}$ be the geodesic space consisting of circles of radii $%
\frac{1}{i}$ for $1\leq i\leq n$ joined at a point. These spaces are
Gromov-Hausdorff convergent to a geodesic Hawaiian Earring, but their
universal covers consist of infinite trees with valencies tending to
infinity, and hence are not Gromov-Hausdorff (pointed) precompact. One can
\textquotedblleft thicken\textquotedblright\ these examples into a family of
Riemannian $2$-manifolds with same property. It seems like an interesting
question to characterize when precompactness of a class of geodesic spaces
(even a single space!) implies precompactness of the collection of all
covering spaces.
\end{example}

The following example makes one wonder whether Corollary \ref{count} is
optimal.

\begin{example}
\label{graph}Let $S_{n}$ denote the space consisting of two points joined by 
$n$ edges of length $\frac{3}{2}$, with the geodesic metric. Each pair of
edges determines a circle of length $3$, so there is a single critical value 
$1$ of multiplicity $\left( 
\begin{array}{c}
n \\ 
2%
\end{array}%
\right) =\frac{1}{2}(n^{2}+n)$. On the other hand, we can cover the space
using one open $\frac{1}{3}$-ball at each of the two vertices and $2$
additional $\frac{1}{3}$-balls on each edge for a total of $2(n+1)$. The
estimate from Corollary \ref{count} is $\frac{4}{3}n^{3}+2n^{2}+\frac{2}{3}n$
and at any rate each edge requires at least one ball, so one cannot do
better than a degree $3$ polynomial. Another example that can be checked in
a similar fashion is the $1$-skeleton of a regular $n$-simplex with every
edge length equal to $1$, with the geodesic metric. In this example each
boundary of a $2$-face is isometric to a standard circle of circumference $3$%
. There is a single critical value $1$ of multiplicity $\left( 
\begin{array}{c}
n+1 \\ 
3%
\end{array}%
\right) =\frac{1}{6}(n^{3}-n)$. But any cover by open $\frac{1}{3}$-balls
will again require at least one ball for each of the $\left( 
\begin{array}{c}
n+1 \\ 
2%
\end{array}%
\right) $ edges and therefore the best that Corollary \ref{count} can
provide is a polynomial of order $6$ in $n$.
\end{example}

\section{$(\protect\varepsilon ,\protect\delta )$-Chassis}

In this section, $X$ will be a compact geodesic space of diameter $D$, $%
\varepsilon >0$ is fixed, and $0<\delta <\sigma $ will be positve numbers
with $\sigma \leq \varepsilon $, on which we will place additional
requirements to reach stronger conclusions. An $(\varepsilon ,\delta )$%
-chassis is defined to be a simplicial $2$-complex that has for its vertex
set a $\delta $-dense set $V:=\{v_{0},...,v_{m}\}$ (i.e. for every $x\in X$
there is some $v_{i}$ such that $d(x,v_{i})<\delta $). We let $v_{i}$ and $%
v_{j}$ be joined by an edge if and only if $d(v_{i},v_{j})<\varepsilon $ and
let $v_{i},v_{j},v_{k}$ span a $2$-simplex if and only if all three pairs of
vertices are joined by an edge. Next, let $K$ be the $1$-skeleton of $C$ and
denote the edge joining $v_{i}$ and $v_{j}$ by $e_{ij}$, $i<j$. Define the
length of $e_{ij}$ to be $d(v_{i},v_{j})$ (distance in $X$), the length of
an edge path to be the sum of the lengths of its edges, and the simplicial
distance $d_{S}(v_{i},v_{j})$ between vertices $v_{i}\neq v_{j}$ to be the
length of a shortest edge path joining them.

Every edge path in $C$ starting at $v_{0}$ (which we take for the basepoint)
is equivalent to a chain of vertices $\{v_{0}=v_{1_{0}},...,v_{i_{k}}\}$,
which has a corresponding $\varepsilon $-chain $%
\{v_{0}=v_{1_{0}},...,v_{i_{k}}\}$ in $X$. Now the basic moves in an edge
homotopy in $C$ (replacing one side of a simplex by the concatenation of the
other two, removal of an edge followed by its reversal, or vice versa)
correspond precisely to the basic moves in an $\varepsilon $-homotopy. In
other words, the function that takes the edge-homotopy class $%
[v_{0}=v_{1_{0}},...,v_{i_{k}}=v_{0}]$ of a loop to the $\varepsilon $%
-homotopy class $[v_{0}=v_{1_{0}},...,v_{i_{k}}=v_{0}]_{\varepsilon }$ is a
well-defined homomorphism $E$ from the group of edge homotopy classes of
edge loops (i.e. the edge group) $\pi _{E}(C)$ of $C$ into $\pi
_{\varepsilon }(X)$. We denote by $D_{S}$ the diameter of $C$ with the
simplicial metric.

\begin{lemma}
\label{zero}If $\delta <\frac{\sigma }{4}$ then $C$ is connected and $E$ is
surjective. In fact, if $\beta =\{v_{a},y_{1},...,y_{n-1},v_{b}\}$ is an $%
\varepsilon $-chain joining points in $V$ in $X$, then $[\beta
]_{\varepsilon }$ contains a \textquotedblleft simplicial\textquotedblright\ 
$\sigma $-chain $\alpha $ (i.e. a chain having all points in the vertex set $%
V$) such that 
\begin{equation*}
L(\alpha )\leq L(\beta )+2\left( \frac{8L(\beta )}{\sigma }\right) \delta
\end{equation*}
\end{lemma}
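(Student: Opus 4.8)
The plan is to push any vertex-to-vertex $\varepsilon$-chain onto $V$ after first making it both short and fine. Given $\beta=\{v_{a},y_{1},\dots,y_{n-1},v_{b}\}$, I would first apply Lemma \ref{lestl} (with parameter $\varepsilon$ and $L=L(\beta)$) to replace $\beta$ by an $\varepsilon$-homotopic chain $\beta _{1}$ with the same endpoints $v_{a},v_{b}$, with $L(\beta _{1})\leq L(\beta)$ and $\nu(\beta _{1})\leq \frac{2L(\beta)}{\varepsilon}+1$. Next, since $X$ is geodesic, I would refine each edge of $\beta _{1}$ along a geodesic into sub-segments of length at most $\sigma -2\delta$, obtaining $\beta ^{\prime }\in [\beta _{1}]_{\varepsilon }=[\beta]_{\varepsilon }$ with $L(\beta ^{\prime })=L(\beta _{1})\leq L(\beta)$ and all consecutive distances $\leq \sigma -2\delta <\varepsilon$. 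Finally I would keep the endpoints fixed and replace each interior point of $\beta ^{\prime }$ by a vertex of $V$ within distance $\delta$ (possible since $V$ is $\delta$-dense), obtaining the candidate $\alpha$.

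Two estimates carry the argument. The first controls the size of $\beta ^{\prime }$: subdividing an edge of length $\ell <\varepsilon$ uses at most $\frac{\ell}{\sigma -2\delta}+1$ sub-segments, so $\nu(\beta ^{\prime })\leq \frac{L(\beta)}{\sigma -2\delta}+\frac{2L(\beta)}{\varepsilon}+1$. Here the hypothesis $\delta <\frac{\sigma}{4}$ enters: it gives $\sigma -2\delta >\frac{\sigma}{2}$, hence $\frac{1}{\sigma -2\delta}<\frac{2}{\sigma}$, and together with $\sigma \leq \varepsilon$ this yields $\nu(\beta ^{\prime })<\frac{4L(\beta)}{\sigma}+1$; in particular $\beta ^{\prime }$ has fewer than $\frac{4L(\beta)}{\sigma}$ interior points. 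The second estimate lets me invoke Proposition \ref{close}: since each consecutive distance of $\beta ^{\prime }$ is at most $\sigma -2\delta$ and $\varepsilon \geq \sigma$, we get $E_{\varepsilon }(\beta ^{\prime })\geq \varepsilon -(\sigma -2\delta)\geq 2\delta$, while $\Delta(\beta ^{\prime },\alpha)<\delta \leq \frac{E_{\varepsilon }(\beta ^{\prime })}{2}$; hence $\alpha$ is an $\varepsilon$-chain that is $\varepsilon$-homotopic to $\beta ^{\prime }$, so $[\alpha]_{\varepsilon }=[\beta]_{\varepsilon }$.

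It then remains only to record that $\alpha$ has all the asserted properties. Its endpoints are $v_{a},v_{b}\in V$ and its interior points lie in $V$ by construction, so it is simplicial; each consecutive distance is $<\delta +(\sigma -2\delta)+\delta =\sigma$, so it is a $\sigma$-chain; and since moving each of its fewer than $\frac{4L(\beta)}{\sigma}$ interior points by less than $\delta$ increases the total length by less than $2\delta$ per point, $L(\alpha)<L(\beta ^{\prime })+2\delta \cdot \frac{4L(\beta)}{\sigma}\leq L(\beta)+2\bigl(\tfrac{8L(\beta)}{\sigma}\bigr)\delta$. Connectedness of $C$ and surjectivity of $E$ then follow at once: applying the construction to a fine geodesic subdivision from $v_{0}$ to an arbitrary vertex, respectively to any $\varepsilon$-loop based at $v_{0}$, produces a simplicial $\sigma$-chain which, after deleting any consecutively repeated vertices, is an edge path, respectively an edge loop, of $C$ carrying the prescribed $\varepsilon$-homotopy class under $E$. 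The one real obstacle is the bookkeeping in the size estimate: a chain with many tiny back-and-forth steps would wreck the count, so the preliminary shortening via Lemma \ref{lestl} is essential, and the subdivision scale must remain comparable to $\sigma$ --- which is exactly what $\delta <\frac{\sigma}{4}$ guarantees. Everything else is the triangle inequality.
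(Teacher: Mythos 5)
Your proof is correct and follows essentially the same route as the paper's: reduce $\beta$ to a short, fine chain (via Lemma \ref{lestl} together with geodesic refinement) and then snap its interior points to nearby vertices of $V$, invoking Proposition \ref{close} to see that this preserves the $\varepsilon$-homotopy class. The only cosmetic difference is that you apply Lemma \ref{lestl} at scale $\varepsilon$ before refining to scale $\sigma-2\delta$, whereas the paper first refines to a $\frac{\sigma}{4}$-chain and then applies Lemma \ref{lestl} at scale $\frac{\sigma}{4}$; your bookkeeping in fact produces the slightly sharper bound $L(\alpha)<L(\beta)+\frac{8L(\beta)\delta}{\sigma}$.
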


\begin{proof}
Given any $v_{a},v_{b}\in V$, let $c$ be a geodesic joining them in $X$. We
may subdivide $c$ into segments with endpoints $x_{k},x_{k+1}$, $0\leq k\leq
N$, of length at most $\frac{\varepsilon }{6}$. For each $m$ we may choose a
point $v_{i_{m}}\in V$ such that $d(x_{m},v_{i_{m}})<\delta $. Since $\delta
<\frac{\varepsilon }{4}$, the triangle inequality implies that $v_{i_{m}}$
and $v_{i_{m+1}}$ are joined by an edge in $C$, and and hence $v_{a}$, $%
v_{b} $ are joined by an edge path in $C$. Surjectivity will follow from the
last statement, since we may take $v_{a}=v_{b}=v_{0}$ and then resulting $%
\alpha $ is an $\varepsilon $-loop with $[\alpha ]_{\varepsilon }$ in the
image of $E$. By refinement we may suppose $\beta $ is a $\frac{\sigma }{4}$%
-chain, and applying Lemma \ref{lestl} we may assume that $n=\left\lfloor 
\frac{8L(\beta )}{\sigma }+1\right\rfloor $. For each $i$ we may choose some 
$v_{j_{i}}$ such that $d(v_{j_{i}},x_{i})<\delta $ (letting $v_{j_{0}}=v_{a}$
and $v_{j_{n}}=v_{b}$). Since $\delta <\frac{\sigma }{4}$, Proposition \ref%
{close} now implies that $\beta $ is $\sigma $-homotopic to the $\sigma $%
-chain $\alpha :=\{v_{j_{0}},...,v_{j_{n}}\}$ and hence $[\beta
]_{\varepsilon
}=E([v_{j_{0}},...,v_{j_{n}}])=[v_{j_{0}},...,v_{j_{n}}]_{\varepsilon }$.
Moreover, the triangle inequality implies that $L(\alpha )\leq L(\beta
)+2n\delta $, completing the proof.
\end{proof}

\begin{lemma}
\label{second}If $\delta <\min \{\frac{\varepsilon }{4},\frac{\varepsilon
^{2}}{32D}\}$ then for any $v_{a},v_{b}\in V$, $d(v_{a},v_{b})\leq
d_{S}(v_{a},v_{b})\leq d(v_{a},v_{b})+\frac{\varepsilon }{2}$.
\end{lemma}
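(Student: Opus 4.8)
The inequality $d(v_a,v_b)\le d_S(v_a,v_b)$ is immediate: any edge path joining $v_a$ to $v_b$ in $C$ projects to a chain in $X$ whose length is $d_S$ of that path, and by the triangle inequality in $X$ the distance $d(v_a,v_b)$ is at most that length; take the infimum over edge paths. So the content is the upper bound $d_S(v_a,v_b)\le d(v_a,v_b)+\frac{\varepsilon}{2}$, and the strategy is to exhibit one short edge path. First I would take a geodesic $c$ in $X$ from $v_a$ to $v_b$, of length $\ell:=d(v_a,v_b)$, and subdivide it into $N$ segments of equal length $\ell/N$, choosing $N$ so that $\ell/N$ is comfortably smaller than $\varepsilon$ (say $\ell/N<\varepsilon/4$), with $N$ as small as possible subject to that — roughly $N\approx 4\ell/\varepsilon$. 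Then, exactly as in the proof of Lemma \ref{zero}, for each subdivision point $x_k$ I pick a vertex $v_{i_k}\in V$ with $d(x_k,v_{i_k})<\delta$ (taking $v_{i_0}=v_a$, $v_{i_N}=v_b$); since $\delta<\varepsilon/4$, consecutive chosen vertices satisfy $d(v_{i_k},v_{i_{k+1}})<\varepsilon$, so they are joined by edges and we get an edge path $P$ in $C$.

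Next I would estimate the length of $P$. By the triangle inequality, each edge satisfies $d(v_{i_k},v_{i_{k+1}})\le d(x_k,x_{k+1})+2\delta=\ell/N+2\delta$, so $L(P)\le \ell + 2N\delta$. Now I need $2N\delta\le \varepsilon/2$. With $N$ chosen on the order of $4\ell/\varepsilon\le 4D/\varepsilon$, this amounts to requiring $\delta \lesssim \varepsilon^2/(16D)$, which is exactly the shape of the hypothesis $\delta<\varepsilon^2/(32D)$ (the factor $32$ versus $16$ gives a little slack to absorb the rounding in the choice of $N$ and the fact that $\ell\le D$). So the bookkeeping is: pick $N:=\lceil 4D/\varepsilon\rceil$ (or $\lceil 4\ell/\varepsilon\rceil$, whichever is cleaner), check that $\ell/N<\varepsilon/4<\varepsilon$ so that the edges exist, and check that $2N\delta<\varepsilon/2$ using $\delta<\varepsilon^2/(32D)$ and $N\le 4D/\varepsilon + 1\le$ (something a bit under) $8D/\varepsilon$ — I'd verify the constants carefully here. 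Then $d_S(v_a,v_b)\le L(P)\le \ell+2N\delta<\ell+\varepsilon/2$, which is the claim.

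The main obstacle is purely arithmetic: making the choice of $N$ and the constant in the hypothesis on $\delta$ fit together so that both "$\ell/N$ is small enough for edges to exist" and "$2N\delta\le\varepsilon/2$" hold simultaneously, uniformly in $v_a,v_b$ (hence using $\ell\le D$). There is no conceptual difficulty — it is the same construction as in Lemma \ref{zero} — but one must be slightly careful that the ceiling in $N$ does not blow up the estimate when $\ell$ is very small (if $\ell<\varepsilon/4$ one can just take $N=1$ and the edge $e_{ab}$ itself, with $d_S(v_a,v_b)\le d(v_a,v_b)$ directly, so the small-$\ell$ case is in fact easier). I would handle that by splitting into the cases $\ell<\varepsilon/4$ (trivial) and $\ell\ge\varepsilon/4$ (where $N\le 4\ell/\varepsilon+1\le 8\ell/\varepsilon\le 8D/\varepsilon$ is a clean bound), and then $2N\delta \le 16D\delta/\varepsilon<16D\cdot\varepsilon^2/(32D\varepsilon)=\varepsilon/2$ closes it.
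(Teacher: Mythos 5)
Your argument is correct and is essentially the same construction the paper uses: subdivide a geodesic from $v_a$ to $v_b$, project the subdivision points to nearby vertices in the $\delta$-dense set, and bound the length increase by the triangle inequality. The only difference is that the paper does not re-run this bookkeeping: it simply subdivides the geodesic into an $\varepsilon$-chain $\beta$ of length $d(v_a,v_b)$ and applies Lemma \ref{zero} with $\sigma=\varepsilon$, which already yields a simplicial chain of length at most $L(\beta)+\tfrac{16L(\beta)}{\varepsilon}\delta\le L(\beta)+\tfrac{16D}{\varepsilon}\cdot\tfrac{\varepsilon^2}{32D}=L(\beta)+\tfrac{\varepsilon}{2}$.
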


\begin{proof}
The left inequality is obvious. Subdivide a geodesic in $X$ joining $%
v_{a},v_{b}$ to produce an $\varepsilon $-chain $\beta $ of length equal to $%
d(v_{a},v_{b})$. Taking $\sigma =\varepsilon $ in Lemma \ref{zero} produces
a simplical chain $\alpha $ of length at most $L(\beta )+\frac{\varepsilon }{%
2}$ joining $v_{a}$ and $v_{b}$.
\end{proof}

\begin{lemma}
\label{third}If $\phi _{\varepsilon \sigma }$ is a bijection and $\delta
<\min \left\{ \frac{\varepsilon -\sigma }{2},\frac{\sigma }{16}\right\} $
then $E$ is injective.
\end{lemma}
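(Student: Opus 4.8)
The plan is to show that if $\lambda$ is an edge loop in $C$ based at $v_0$ whose associated $\varepsilon$-chain in $X$ is $\varepsilon$-null, then $\lambda$ is already edge-null in $C$, so that $\ker E$ is trivial. The naive approach --- take an $\varepsilon$-null homotopy in $X$ and replace each point of each intermediate chain by a nearby vertex of $V$ --- fails because the intermediate chains are only $\varepsilon$-chains, and such a perturbation enlarges step sizes to roughly $\varepsilon+2\delta$, which need not fit inside $C$. The remedy is to pass to the smaller scale $\sigma$: since $\phi_{\varepsilon\sigma}$ is a bijection it is in particular injective, so a $\sigma$-loop that is $\varepsilon$-null is automatically $\sigma$-null (equivalently, by Lemma \ref{nocrit}, there are no homotopy critical values in $[\sigma,\varepsilon)$), and the perturbation of a $\sigma$-chain has step size $<\sigma+2\delta<\varepsilon$ by the hypothesis $\delta<\frac{\varepsilon-\sigma}{2}$.

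First I would refine $\lambda$, through edge-homotopies in $C$, to an edge loop $\lambda'$ whose associated chain in $X$ is a simplicial $\sigma$-loop. This is done by repeatedly inserting, between the endpoints $v_i,v_j$ of an edge of the current loop, a vertex of $V$ lying within $\delta$ of the midpoint of a geodesic from $v_i$ to $v_j$; since $\delta<\frac{\sigma}{16}$, the three pairwise distances among $v_i$, $v_j$ and the new vertex are all less than $\varepsilon$, so each insertion spans a $2$-simplex of $C$ and is a legitimate basic edge move, and iterating drives all step lengths below $\sigma$ after finitely many rounds. Because basic edge moves in $C$ correspond to basic moves of $\varepsilon$-chains in $X$, the chain of $\lambda'$ is $\varepsilon$-homotopic to that of $\lambda$, hence $\varepsilon$-null, hence $\sigma$-null by the previous paragraph; and $\lambda'$ represents the same element of $\pi_{E}(C)$ as $\lambda$.

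It remains to show that a simplicial $\sigma$-loop at $v_0$ which is $\sigma$-null in $X$ is edge-null in $C$; this is the heart of the argument. Fix a $\sigma$-null homotopy $\langle\lambda'=\eta_0,\eta_1,\dots,\eta_N=\{v_0\}\rangle$ in $X$. I would choose, for every point occurring anywhere in this homotopy, a vertex of $V$ within $\delta$ of it, taking the vertex to be the point itself whenever the point already lies in $V$, and making the choice coherent along the homotopy so that a point surviving a basic move keeps its vertex. Replacing each point of $\eta_t$ by its chosen vertex yields a chain $\eta_t'$ with consecutive vertices within $\sigma+2\delta<\varepsilon$, hence an edge path in $C$; moreover $\eta_0'=\lambda'$ and $\eta_N'=\{v_0\}$. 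When $\eta_{t+1}$ arises from $\eta_t$ by inserting or deleting a point, $\eta_{t+1}'$ arises from $\eta_t'$ by inserting or deleting the corresponding vertex, and the three pairwise distances among that vertex and its two neighbours are again all less than $\sigma+2\delta<\varepsilon$, so that triple spans a $2$-simplex of $C$ and the step is a basic edge move. Thus $\langle\eta_0',\dots,\eta_N'\rangle$ is an edge-homotopy in $C$ from $\lambda'$ to $\{v_0\}$, so $\lambda'$, and therefore $\lambda$, is edge-null.

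The main obstacle is the bookkeeping in the last paragraph together with the refinement in the middle one: one must verify that the approximating vertices can be chosen coherently along an entire homotopy so that each step stays a single basic move, and that the two hypotheses on $\delta$ really do force every triple appearing --- both during the refinement of $\lambda$ and during the simplicial approximation of the $\sigma$-null homotopy --- to span a $2$-simplex of $C$. Once the working scale has been lowered from $\varepsilon$ to $\sigma$, everything else is routine.
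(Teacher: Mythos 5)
Your argument is correct and, at its core, mirrors the paper's: lower the scale to $\sigma$, use injectivity of $\phi_{\varepsilon\sigma}$ to turn $\varepsilon$-nullity into $\sigma$-nullity, then perturb a $\sigma$-null homotopy onto vertices of $V$, where the bound $\sigma+2\delta<\varepsilon$ keeps every intermediate step inside a $2$-simplex of $C$ and hence a basic edge move. You are in fact somewhat more careful than the paper about the preliminary reduction: the paper invokes Lemma \ref{zero} to ``assume'' $\alpha$ is a simplicial $\sigma$-chain, but the $\varepsilon$-homotopy produced there (refine, shorten, then apply Proposition \ref{close}) passes through non-simplicial chains and so does not directly give an edge-homotopy in $C$; your iterated near-midpoint insertions, each legitimized by a $2$-simplex since $\delta<\frac{\sigma}{16}$, supply the explicit edge-homotopy from $\lambda$ to the simplicial $\sigma$-loop $\lambda'$ that this reduction actually requires.
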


\begin{proof}
Suppose $[v_{0}=v_{1_{0}},...,v_{i_{k}}=v_{0}]\in \ker E$. This means that
the $\varepsilon $-chain $\alpha :=\{v_{0}=v_{1_{0}},...,v_{i_{k}}=v_{0}\}$
is $\varepsilon $-null in $X$. The problem, of course, is that the $%
\varepsilon $-null-homotopy may not involve only simplicial $\varepsilon $%
-chains and hence does not correspond to a simplicial null-homotopy in $C$.
However, by Lemma \ref{zero}, we may assume that $\alpha $ is in fact an $%
\varepsilon $-null simplicial $\sigma $-chain. By our choice of $\sigma $, $%
\alpha $ is in fact $\sigma $-null. Let $\left\langle \alpha :=\eta
_{0},...,\eta _{m}=\{v_{0}\}\right\rangle $ be a $\sigma $-homotopy and $A$
be the set of all points $a$ such that $a$ is in some chain $\eta _{i}$. For
each $a\in A$ let $a^{\prime }\in V$ be such that $d(a,a^{\prime })<\delta <%
\frac{\varepsilon -\sigma }{2}$, provided that if $a$ is already in $V$ then 
$a^{\prime }:=a$. Finally, define $\eta _{k}^{\prime
}:=\{v_{0}=x_{k1}^{\prime },...,x_{kr_{k}}^{\prime }=v_{0}\}$ whenever $\eta
_{k}:=\{v_{0}=x_{k1},...,x_{kr_{k}}=v_{0}\}$; by definition, $\eta
_{k}^{\prime }$ is a simplicial chain and since $\alpha $ is already
simplicial $\eta _{0}^{\prime }=\eta _{0}=\alpha $. Moreover, $%
d(x_{ki}^{\prime },x_{k(i+1)}^{\prime })<\sigma +2(\frac{\varepsilon -\sigma 
}{2})=\varepsilon $. That is, $\left\langle \alpha :=\eta _{0}^{\prime
},...,\eta _{m}^{\prime }=\{v_{0}\}\right\rangle $ is an $\varepsilon $%
-homotopy via simplicial chains, and so is equivalent to a simplicial
homotopy in $C$.
\end{proof}

We will now recall the well-known method of choosing generators and
relations for $\pi _{E}(C)$, while adding a geometric twist. First, we
obtain a maximal subtree $T$ of the $1$-skeleton $K$ as follows. Choose some 
$v_{k}$ of maximal simplicial distance from $v_{0}$ and connect $v_{k}$ to $%
v_{0}$ by a shortest simplicial path $\Gamma _{1}$; $\Gamma _{1}$ is the
starting point in the construction of $T$. Since $\Gamma _{1}$ is minimal it
must be simply connected, hence a tree; if it is maximal then we are done.
Otherwise there is at least one vertex not in $\Gamma _{1}$, and we choose
one, $v_{j}$, of maximal simplicial distance from $v_{0}$. Let $\Gamma _{2}$
be a minimal simplical path from $v_{j}$ to $v_{0}$. If at some point $%
\Gamma _{2}$ meets (for the first time) any vertex $w$ already in $T$, then
we replace the segment of $\Gamma _{2}$ from $w$ to $v_{0}$ by the unique
shortest segment of $\Gamma _{1}$ from $w$ to $v_{0}$. In doing so we do not
change the length of $\Gamma _{2}$ and ensure that the union of $\Gamma _{1}$
and $\Gamma _{2}$ is still a tree. We iterate this process until all
vertices are in the tree. The resulting maximal tree $T$ has the property
that every vertex $v_{j}$ in $K$ is connected to $v_{0}$ by a unique
simplicial path contained in $T$ having length at most the simplicial
diameter $D_{S}$ of $C$.

Now $\pi _{E}(C)$ has generators and relators defined as follows (see, for
example, \cite{Arm}, Section 6.4): The generators are concatenations of the
form $[g_{ij}]=[p\ast e_{ij}\ast q]$, where $e_{ij}$ is an edge that is in $%
K $ but not in $T$ and $p$ (resp. $q$) is the unique shortest simplicial
path in $T$ from $v_{j}$ to $v_{0}$ (resp. $v_{0}$ to $v_{i})$. The
relations are of the form $[g_{ij}][g_{jk}]=[g_{ik}]$, provided $%
v_{i},v_{j},v_{k}$ span a $2$-simplex in $K$ with $i<j<k$. Note that the
simplicial length of $g_{ij}$ is at most $2D_{S}+\varepsilon $.

\begin{proof}[Proof of Theorem \protect\ref{ngmv}]
Since the homotopy critical values are discrete, we may always choose $%
\sigma <\varepsilon $ so that $\phi _{\varepsilon \sigma }$ is injective. We
may then choose $\delta $ so that all of the requirements of the above
lemmas all hold. Then the resulting generators of $\pi _{E}(C)$ correspond
under the isomorphism $E$ to classes $[\gamma _{ij}]_{\varepsilon }$ in $X$
such that the length of each $\gamma _{ij}$ is at most $2D_{S}+\varepsilon +%
\frac{\varepsilon }{2}<2(D+\varepsilon )$. This proves the first part of the
theorem, and the second part was proved in the Introduction.

For the third part, we begin by choosing an $\frac{\varepsilon }{4}$-dense
set $W=\{w_{1},...,w_{s}\}$ in $X$ and an arbitrary $\delta $-chain $\mu
_{ij}$ from $w_{i}$ to $w_{j}$ with $\mu _{ji}=\overline{\mu _{ij}}$. Given
any $\delta $-loop $\lambda =\{v_{0}=x_{0},...,x_{n}=v_{0}\}$ of length at
most $2(D+\varepsilon )$, choose a subchain\ $\mu
=\{y_{0}=v_{0},...,y_{r}=v_{0}\}$ (i.e. $y_{j}=x_{i_{j}}$ for some
increasing $i_{j}$) with the following property: If $\lambda _{j}$ denotes
the $\delta $-chain $\{y_{j}=x_{i_{j}},x_{i_{j}+1},...,x_{i_{j+1}}=y_{j+1}\}$
(i.e. the \textquotedblleft segment\textquotedblright\ of $\lambda $ from $%
y_{i}$ to $y_{i+1}$) then for any $j$, $L(\lambda _{j})<\frac{\varepsilon }{4%
}$ and $L(\lambda _{j})+L(\gamma _{j+1})\geq \frac{\varepsilon }{4}$. This
can be accomplished by iteratively removing points to form the subsequence,
in a way similar to what was done in the proof of Lemma \ref{lestl}.The same
counting argument as in that proof gives us $r\leq \frac{2L(\lambda )}{%
\varepsilon }\leq \frac{8(D+\varepsilon )}{\varepsilon }$. For each $y_{j}$,
choose some $y_{j}^{\prime }\in W$ such that $d(y_{j},y_{j}^{\prime })<\frac{%
\varepsilon }{4}$. There is now a corresponding $\delta $-chain $\lambda
^{\prime }$ that is a concatenation of paths $\mu _{i_{k}j_{k}}$, where $%
y_{k}^{\prime }=w_{i_{k}}$ and $y_{k+1}=w_{j_{k}}$. Next, let $\gamma _{j}$
be a $\delta $-chain from $y_{j}^{\prime }$ to $y_{j}$ of length at most $%
\frac{\varepsilon }{4}$. It is not hard to check that $\lambda $ is $\delta $%
-homotopic to $\beta _{r}\ast \cdot \cdot \cdot \beta _{0}\ast \lambda
^{\prime }$ , where $\beta _{0}:=\overline{\lambda _{0}}\ast \gamma _{1}\ast
\mu _{i_{0}j_{0}}$ and for $k>0$, 
\begin{equation*}
\beta _{k}:=\overline{\mu _{i_{0}j_{0}}}\ast \cdot \cdot \cdot \overline{\mu
_{i_{k}j_{k}}}\ast \overline{\gamma _{k}}\ast \overline{\lambda _{k}}\ast
\gamma _{k+1}\ast \mu _{i_{k+1}j_{k+1}}\ast \cdot \cdot \cdot \ast \mu
_{i_{0}j_{0}}\text{.}
\end{equation*}%
Let us count the ways to obtain $\lambda $. First, $\lambda ^{\prime }$
corresponds to a sequential choice of $r$ elements of $W$, so there are at
most $s^{r}$ possibilities. Next, $\lambda $ is obtained from $\lambda
^{\prime }$ by $r$ concatenations, each of which involves a choice of the
element $[\overline{\gamma _{k}}\ast \overline{\lambda _{k}}\ast \gamma
_{k+1}\ast \mu _{i_{k+1}j_{k+1}}]_{\delta }\in \pi _{\delta }(X,w)$ for some 
$w\in W$ with $L(\overline{\gamma _{k}}\ast \overline{\lambda _{k}}\ast
\gamma _{k+1}\ast \mu _{i_{k+1}j_{k+1}})<\varepsilon $. So there are at most 
$r\cdot M$ distinct choices to change from $\lambda ^{\prime }$ to $\lambda $%
.
\end{proof}

From the second part of Theorem \ref{ngmv} and Theorem \ref{slsc} we may
immediately derive the following corollary:

\begin{corollary}
\label{lengthc}Let $X$ be a compact, semilocally simply connected geodesic
space. If $\varepsilon >0$ is a lower bound for the homotopy critical
spectrum of $X$ then for any $L>0$, $\Gamma (X,L)\leq C\left( X,\frac{%
\varepsilon }{4}\right) ^{\frac{4L}{\varepsilon }}$.
\end{corollary}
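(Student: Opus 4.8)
The plan is to reduce the statement about $\pi_1(X)$ directly to the statement about $\pi_\varepsilon(X)$ already proved as the second part of Theorem \ref{ngmv}, using the isomorphism $\Lambda$ from Theorem \ref{slsc}. First I would observe that since $\varepsilon>0$ is a lower bound for the homotopy critical spectrum, Theorem \ref{slsc}(2) applies: for any choice of basepoint, the restriction of $\Lambda$ to $\pi_1(X)\to\pi_\varepsilon(X)$ is an isomorphism, and moreover it is \emph{length preserving} in the sense that $\left\vert\Lambda([c])\right\vert=\left\vert[c]\right\vert$ for every loop $c$. (The inequality $\left\vert\Lambda([c])\right\vert\le\left\vert[c]\right\vert$ is automatic from the ``length non-increasing'' property of $\Lambda$ noted in the introduction; Theorem \ref{slsc}(2) supplies the reverse inequality, since the proof there shows $\left\vert[c]\right\vert$ is realized by a chording of a shortest $\varepsilon$-loop in $\Lambda([c])$.)

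Given this, fix any basepoint $\ast$ and let $g_1,\dots,g_t\in\pi_1(X,\ast)$ be distinct elements with $\left\vert g_i\right\vert\le L$. Under $\Lambda$ these map to distinct elements $\Lambda(g_1),\dots,\Lambda(g_t)\in\pi_\varepsilon(X)$ — distinct because $\Lambda$ is injective on $\pi_1(X)$ — and each satisfies $\left\vert\Lambda(g_i)\right\vert=\left\vert g_i\right\vert\le L<L'$ for any $L'>L$. Actually it is cleaner to note $\left\vert\Lambda(g_i)\right\vert\le L$ directly; to apply the second part of Theorem \ref{ngmv}, which bounds the number of $[\alpha]_\varepsilon$ with $\left\vert[\alpha]_\varepsilon\right\vert<L'$, I would simply invoke that bound with $L'$ any value $>L$ and then let $L'\downarrow L$, or equivalently note that the bound $C(X,\tfrac{\varepsilon}{4})^{4L/\varepsilon}$ is continuous and increasing in $L$ so the same count controls the closed condition. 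Hence $t\le C\left(X,\frac{\varepsilon}{4}\right)^{\frac{4L}{\varepsilon}}$. Since this holds for every basepoint, taking the supremum over basepoints gives $\Gamma(X,L)\le C\left(X,\frac{\varepsilon}{4}\right)^{\frac{4L}{\varepsilon}}$, as claimed.

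I expect there is essentially no obstacle here — the corollary is a direct transport of Theorem \ref{ngmv}(2) across $\Lambda$. The only point requiring a word of care is the passage from the strict inequality $\left\vert[\alpha]_\varepsilon\right\vert<L'$ in Theorem \ref{ngmv}(2) to the non-strict $\left\vert g_i\right\vert\le L$ in the definition of $\Gamma(X,L)$; this is handled either by the limiting argument above or, more simply, by observing that Corollary \ref{exister} and the fact that $\Lambda$ is length-preserving mean that if $\left\vert g_i\right\vert\le L$ then some representative has length exactly $\left\vert g_i\right\vert$, so choosing $L'$ slightly larger than $L$ (say $L'=L+\eta$ for small $\eta>0$) captures all such $g_i$ and then letting $\eta\to 0^+$ yields the stated exponent $\frac{4L}{\varepsilon}$ with no loss. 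The finiteness of each $\left\vert g\right\vert$ and of $\Gamma(X,L)$ that is needed for the statement to be non-vacuous is itself part of Theorem \ref{slsc} together with this bound.
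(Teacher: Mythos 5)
Your argument is correct and is exactly the paper's approach: the paper simply states that the corollary follows immediately from Theorem \ref{ngmv}(2) via the length-preserving isomorphism $\Lambda$ of Theorem \ref{slsc}, which is the transport you carry out. Your careful treatment of the strict-versus-nonstrict inequality (via $L' \downarrow L$, or via Corollary \ref{exister}) is a reasonable filling-in of a detail the paper leaves implicit.
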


\begin{proof}[Proof of Theorem \protect\ref{fthm}]
By Theorem \ref{slsc}, if $\delta <\varepsilon $ is sufficiently small, the
function $\Lambda :\pi _{1}(X)\rightarrow \pi _{\delta }(X)$ is a
length-preserving isomorphism. Then the desired generators are those
corresponding to the generators of $\pi _{\delta }(X)$ given by the third
part of Theorem \ref{ngmv}, except that, \textit{a priori} those generators
have length $2(D+\delta )$. However, since $X$ is compact and semilocally
simply connected, the proof is finished by a standard application of
Ascoli's Theorem. The statement about the $1$-systole follows from Theorem %
\ref{slsc}.
\end{proof}

\begin{example}
\label{neither}The product of a circle with smaller and smaller spheres has $%
1$-systole, but not volume, bouded below. On the other hand, Vitali
Kapovitch has pointed out (see \cite{N}, Section 0.4) that examples from 
\cite{An} can be modified to have a global lower bound on volume and Ricci
curvature with $1$-systole going to $0$. These examples show that the
finiteness theorems of Anderson and Shen-Wei are independent. At the same
time, each of these examples satisfies the conditions of Theorem \ref{fthm}.
\end{example}

\begin{acknowledgement}
Thanks to Valera Berestovskii for reading an earlier version of the paper
and providing useful comments. Zach Lindsey found an error in an earlier
draft of this paper.
\end{acknowledgement}


\begin{thebibliography}{99}
\bibitem{An} Anderson, Michael T. Short geodesics and gravitational
instantons. J. Differential Geom. 31 (1990), no. 1, 265--275.

\bibitem{Arm} Armstrong, M. A. \textit{Topology. }Undergraduate Texts in
Mathematics. Springer-Verlag, Berlin, 1983.

\bibitem{BPS} Berestovskii, Valera; Plaut, Conrad; Stallman, Cornelius,
Geometric groups. I. Trans. Amer. Math. Soc. 351 (1999), no. 4, 1403--1422.
MR1458295

\bibitem{BPTG} Berestovski\u{\i}, Valera; Plaut, Conrad, Covering group
theory for topological groups. Topology Appl. 114 (2001), no. 2, 141--186.
MR1834930

\bibitem{BPUU} Berestovski\u{\i}, Valera; Plaut, Conrad, Uniform universal
covers of uniform spaces. Topology Appl. 154 (2007), no. 8, 1748--1777.
MR2317077

\bibitem{BPCRT} Berestovski\u{\i}, Valera; Plaut, Conrad, Covering $\mathbb{R%
}$-trees, $\mathbb{R}$-free groups, and dendrites. Adv. Math. 224 (2010),
no. 5, 1765--1783. MR2646109

\bibitem{BCH} Byrd, Fred; Carlile, Chris; Happ, Alex, Homotopy critical
values of the circle, REU project.

\bibitem{cc} Cannon, J. W.; Conner, G. R. The combinatorial structure of the
Hawaiian earring group. Topology Appl. 106 (2000), no. 3, 225--271. MR1775709

\bibitem{cc1} Cheeger, J.; Colding, T., On the structure of spaces with
Ricci curvature bounded below I, J. Diff. Geom. 46 (1997) 406-480.
MR98k:53044

\bibitem{cc2} Cheeger, J.; Colding, T., On the structure of spaces with
Ricci curvature bounded below II, J. Diff. Geom. 54 (2000), no. 1, 13-35.
MR2003a:53043

\bibitem{cc3} Cheeger, J.; Colding, T., On the structure of spaces with
Ricci curvature bounded below III, J. Diff. Geom. 54 (2000), no. 1, 37-74.
MR2003a:53044

\bibitem{REU} Conant, Jim; Curnutte, Victoria; Jones, Corey; Plaut, Conrad;
Pueschel, Kristen; Walpole, Maria; Wilkins, Jay, Discrete homotopy theory
and critical values of geodesic spaces, preprint.

\bibitem{dGS} de Smit, B.; Gornet, R; Sutton, C., Sunada's method and the
covering spectrum, J. Diff. Geom. 86 (2010) 501-537.

\bibitem{G1} Gromov, M., \textit{Structures m\'{e}triques pour les vari\'{e}t%
\'{e}s riemanniennes}. Edited by J. Lafontaine and P. Pansu. \textit{Textes
Math\'{e}matiques} 1. CEDIC, Paris, 1981. MR0682063

\bibitem{G2} Gromov, M., \textit{Metric structures for Riemannian and
non-Riemannian spaces.} With appendices by M. Katz, P. Pansu and S. Semmes.
Translated from the French by Sean Michael Bates. Progress in Mathematics,
152. Birkh\"{a}user Boston, Inc., Boston, MA, 1999. MR1699320

\bibitem{N} Nabutovsky, A., Effective universal coverings and local minima
of the length functional on loop spaces, GAFA Vol. 20 (2010) 545-570.

\bibitem{PS} Plaut, Conrad Metric spaces of curvature $\geq k$. Handbook of
geometric topology, 819--898, North-Holland, Amsterdam, 2002. MR1886682

\bibitem{PQ} Plaut, Conrad, Quotients of uniform spaces. Topology Appl. 153
(2006), no. 14, 2430--2444. MR2243722

\bibitem{PW2} Plaut, Conrad; Wilkins, Jay, Gromov-Hausdorff convergence of
regular covering maps, preprint.

\bibitem{SW1} Sormani, Christina; Wei, Guofang, Hausdorff convergence and
universal covers. Trans. Amer. Math. Soc. 353 (2001), no. 9, 3585--3602.
MR1837249

\bibitem{SW2} Sormani, Christina; Wei, Guofang, The covering spectrum of a
compact length space. J. Differential Geom. 67 (2004), no. 1, 35--77.
MR2153481

\bibitem{SW3} Sormani, Christina; Wei, Guofang, Universal covers for
Hausdorff limits of noncompact spaces. Trans. Amer. Math. Soc. 356 (2004),
no. 3, 1233--1270. MR2021619

\bibitem{SW4} Sormani, Christina; Wei, Guofang,The cut-off covering
spectrum. Trans. Amer. Math. Soc. 362 (2010), no. 5, 2339--2391. MR2584603

\bibitem{ShW} Shen, Zhong Min; Wei, Guofang, On Riemannian manifolds of
almost nonnegative curvature. Indiana Univ. Math. J. 40 (1991), no. 2,
551--565. MR1119188

\bibitem{S} Spanier, Edwin H., \textit{Algebraic topology.} Springer-Verlag,
New York-Berlin, 1981.

\bibitem{Z} A. Zastrow, The second van-Kampen theorem for topological
spaces, Topology Appl. 59 (1994) 201--232.
\end{thebibliography}
\end{document}